\renewcommand*{\backrefalt}[4]{%
    \ifcase #1 (Not cited.)%
    \or        (Cited on page~#2.)%
    \else      (Cited on pages~#2.)%
    \fi}
\DeclareSymbolFont{cyrletters}{OT2}{wncyr}{m}{n}
\DeclareMathSymbol{\Sha}{\mathalpha}{cyrletters}{"58}
\begin{document}

\newtheorem{prop}{Proposition}[section]
\newtheorem{thrm}[prop]{Theorem}
\newtheorem{lemma}[prop]{Lemma}
\newtheorem{cor}[prop]{Corollary}

\newtheorem{theorem}{Theorem}
\renewcommand*{\thetheorem}{\Alph{theorem}}
\newtheorem*{definition}{Definition}

\theoremstyle{definition}
\newtheorem{df}[prop]{Definition}
\newtheorem{ex}[prop]{Example}
\newtheorem{rmk}[prop]{Remark}
\newtheorem{conj}{Conjecture}
\newtheorem{cl}[prop]{Claim}

\renewcommand{\proofname}{\textnormal{\textbf{Proof:  }}}
\renewcommand{\C}{\mathbb C}
\renewcommand{\refname}{Bibliography}
\newcommand{\Z}{\mathbb Z}
\newcommand{\Q}{\mathbb Q}
\renewcommand{\o}{\otimes}
\newcommand{\R}{\mathbb R}
\renewcommand{\O}{\mathcal O}
\newcommand{\g}{\mathfrak g}
\newcommand{\D}{\mathcal D}
\renewcommand{\i}{\sqrt{-1}}
\newcommand{\di}{\partial}
\newcommand{\dibar}{\overline{\partial}}
\renewcommand{\Im}{\operatorname{Im}}
\renewcommand{\ker}{\operatorname{ker}}
\newcommand{\Hom}{\operatorname{Hom}}
\newcommand{\Alb}{\operatorname{Alb}}
\newcommand{\Hilb}{\operatorname{Hilb}}
\newcommand{\CP}{\mathbb{P}_{\mathbb{C}}}
\newcommand{\Isom}{\operatorname{Isom}}
\newcommand{\acts}{\lefttorightarrow}
\newcommand{\codim}{\operatorname{codim}}
\newcommand{\rk}{\operatorname{rk}}
\newcommand{\hdot}{{\:\raisebox{3pt}{\text{\circle*{1.5}}}}}
\newcommand{\Tw}{\operatorname{Tw}}
\newcommand{\tw}{\mathrm{tw}}
\renewcommand{\phi}{\varphi}
\newcommand{\tSha}{\widetilde{\Sha}}
\binoppenalty = 10000
\relpenalty = 10000

\title{Shafarevich--Tate groups of holomorphic Lagrangian fibrations}

\author{Anna Abasheva, Vasily Rogov}

\begin{abstract}
Consider a Lagrangian fibration $\pi\colon X\to \mathbb P^n$ on a hyperk\"ahler manifold $X$. There are two ways to construct a holomorphic family of deformations of $\pi$ over $\C$. The first one is known under the name {\em Shafarevich--Tate family} while the second one is the {\em degenerate twistor family} constructed by Verbitsky. We show that both families coincide. We prove that for a very general $X$ all members of the Shafarevich--Tate family are K\"ahler. There is a related notion of the {\em Shafarevich--Tate group} $\Sha$ associated to a Lagrangian fibration. The connected component of unity of $\Sha$ can be shown to be isomorphic to $\C/\Lambda$ where $\Lambda$ is a finitely generated subgroup of $\C$ and $\C$ is thought of as the base of the Shafarevich--Tate family. We show that for a very general $X$, projective deformations in the Shafarevich--Tate family correspond to the torsion points in $\Sha^0$. A sufficient condition for a Lagrangian fibration $X$ to be projective is the existence of a holomorphic section. We find sufficient cohomological conditions for the existence of a deformation in the Shafarevich--Tate family that admits a section.

\end{abstract}

\subjclass[2010]{14C30; 14D05, 14F99}

\maketitle
\vspace*{-10mm}
\tableofcontents
\vspace*{-10mm}
\section{Introduction}
A compact connected K\"ahler manifold $X$ is called a \emph{hyperk\"ahler manifold} if it is simply connected and $H^0(X,\Omega^2_X)$ is generated by a holomorphic symplectic form $\sigma$. To a Lagrangian fibration $\pi\colon X\to \mathbb P^n$, we can associate a certain group called the {\em Shafarevich--Tate group}.

\begin{definition}
The {\em Shafarevich--Tate group}\footnote{It is sometimes called {\em analytic Shafarevich--Tate group} in the literature in order to distinguish it from {\em arithmetic Shafarevich--Tate group}.} $\Sha$ of a Lagrangian fibration is the abelian group $H^1(\mathbb P^n,Aut^0_{X/\mathbb P^n})$ where $Aut^0_{X/\mathbb P^n}$ is the connected component of unity of the sheaf of vertical automorphisms of $X$ over $\mathbb P^n$.
\end{definition}

Choose an affine open cover $\mathbb P^n = \bigcup U_i$. Denote $U_i\cap U_j$ by $U_{ij}$. A class $s\in\Sha$ can be represented by a \v{C}ech cocycle with coefficients in $Aut^0_{X/\mathbb P^n}$. That is to say, for every pair of indices $i,j$ we are given an automorphism $s_{ij}$ of $\pi^{-1}(U_{ij})$ that commutes with $\pi$. Let us reglue the manifolds $\pi^{-1}(U_i)$ by the automorphisms $s_{ij}$. We obtain a complex manifold $X^s$ equipped with a holomorphic projection $\pi^s \colon X^s \to \mathbb P^n$. The fibers of $\pi^s$ are isomorphic to the fibers of $\pi \colon X \to \mathbb P^n$. This new manifold is called the {\em Shafarevich--Tate twist} of $X$ by $s\in \Sha$. Markman studied Shafarefich-Tate twists of Lagrangian fibrations on manifolds of $K3^{[n]}$-type in \cite{markman2014lagrangian}. Our work was notably influenced by his paper.

Define the sheaf $\Gamma$ of finitely generated abelian groups by the exact sequence
$$
0\to\Gamma\to\pi_*T_{X/\mathbb P^n}\to Aut^0_{X/\mathbb P^n}\to 0.
$$
This exact sequence induces the long exact sequence of cohomology groups. It can be shown to be right exact.
$$
H^1(\mathbb P^n,\Gamma) \to H^1(\pi_*T_{X/\mathbb P^n}) \to \Sha \to H^2(\mathbb P^n,\Gamma) \to 0.
$$
The holomorphic symplectic form $\sigma$ on $X$ induces an isomorphism $\pi_*T_{X/\mathbb P^n}\cong \Omega^1_{\mathbb P^n}$ (see Lemma \ref{tangent cotangent isomorphism} and \cite[Theorem 2.1.11]{abasheva2024shafarevich}). Thus, $H^1(\pi_*T_{X/\mathbb P^n}) \cong H^{1,1}(\mathbb P^n) = \C$. Let $\Sha^0$ be the image of the map $H^1(\pi_*T_{X/\mathbb P^n}) \to \Sha$. We conclude that $\Sha^0$ is isomorphic to a quotient of $\C$ by the finitely generated abelian group $\operatorname{im}H^1(\mathbb P^n,\Gamma)$.

In \cite{verbitsky2015degenerate} the author associates to a Lagrangian fibration $\pi\colon X\to \mathbb P^n$ a family of deformations over $\C$ called a {\em degenerate twistor family}. The construction goes as follows. Let $\alpha$ be a closed $(1,1)$-form on $\mathbb P^n$. There exists a unique complex structure $I_\alpha$ on $X$ such that the form $\omega + \pi^*\alpha$ is a holomorphic $2$-form on $(X,I_\alpha)$ \cite[Thm. 3.5]{verbitsky2015degenerate}. The manifold $(X,I_\alpha)$ is the {\em degenerate twistor deformation} of $X$ with respect to the form $\alpha$. Note that the construction of degenerate twistor deformations has differential geometric flavour while the construction of Shafarevich--Tate twists is of complex analytic nature. Nevertheless, these two constructions turn out to yield the same result.

\begin{theorem}[Theorem \ref{deg.tw=ShaT}]\label{ShT is deg tw intro}
Pick a class $s\in H^1(\pi_*T_{X/\mathbb P^n})$. Consider the twist $X^s$ of $\pi\colon X\to \mathbb P^n$ by the image of $s$ in $\Sha$. Let $\alpha$ be a closed $(1,1)$-form on $\mathbb P^n$ representing the same class in $H^{1,1}(\mathbb P^n)\cong H^1(\pi_*T_{X/\mathbb P^n})$ as $s$. Then the complex manifolds $X^s$ and $(X,I_\alpha)$ are isomorphic as fibrations over $\mathbb P^n$.
\end{theorem}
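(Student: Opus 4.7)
The plan is to build an isomorphism of fibrations $X^s \xrightarrow{\sim} (X, I_\alpha)$ by producing compatible local biholomorphisms over a fine open cover of $\mathbb P^n$ and matching the resulting transition functions with the Shafarevich--Tate cocycle defining $X^s$.

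First, I would Čech-represent the class $[\alpha]$. Choose a Stein open cover $\mathbb P^n = \bigcup U_i$. Since $\alpha$ is $\bar\partial$-closed of type $(1,1)$ and Dolbeault cohomology of a Stein manifold vanishes in positive degree, we may write $\alpha|_{U_i} = \bar\partial\beta_i$ for smooth $(1,0)$-forms $\beta_i$. On overlaps the difference $\beta_{ij} := \beta_i - \beta_j$ is $\bar\partial$-closed of type $(1,0)$, hence holomorphic, and $\{\beta_{ij}\}$ is a Čech cocycle representing $[\alpha]\in H^1(\mathbb P^n, \Omega^1_{\mathbb P^n})$. Applying the symplectic isomorphism $\Omega^1_{\mathbb P^n} \cong \pi_*T_{X/\mathbb P^n}$ followed by the exponential map turns this cocycle into the Shafarevich--Tate cocycle $\{s_{ij}\} = \{\exp \xi_{ij}\}$, where $\xi_{ij}$ is the holomorphic vertical vector field characterised by $\iota_{\xi_{ij}}\omega = \pi^*\beta_{ij}$.

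Second, and this is the main technical obstacle, I would construct on each $\pi^{-1}(U_i)$ a smooth fibration-preserving diffeomorphism
\[
\phi_i\colon (\pi^{-1}(U_i), I_\alpha) \longrightarrow (\pi^{-1}(U_i), I)
\]
intertwining the two complex structures. The heuristic is that $\omega + \pi^*\alpha = \omega + \pi^*\bar\partial\beta_i$ differs from $\omega$ by an exact $(1,1)$-form, so a Moser-type diffeomorphism generated by the vertical $(1,0)$-vector field $v_i$ with $\iota_{v_i}\omega = \pi^*\beta_i$ should carry the $I_\alpha$-holomorphic symplectic form on the source to $\omega$ on the target. In the local normal form of a Lagrangian fibration near an analytic section (the cotangent-bundle model modulo the period lattice), $\phi_i$ is fibrewise translation by $\beta_i$, and one checks directly that translation by a $(1,0)$-form intertwines $I$ with the complex structure that makes $\omega + \pi^*\bar\partial\beta_i$ of type $(2,0)$, i.e. precisely $I_\alpha$. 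The hypothesis that $\pi$ admits local analytic sections outside a codimension-two locus makes this local model available on a dense open subset of the base, and the resulting $\phi_i$ should extend uniquely across the discriminant by a Hartogs-type argument (using that both complex structures are defined on all of $\pi^{-1}(U_i)$).

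Finally, I would glue. On $\pi^{-1}(U_{ij})$ the comparison $\phi_i^{-1}\circ\phi_j$ is, in the local model, fibrewise translation by $\beta_j - \beta_i = -\beta_{ij}$; since $\beta_{ij}$ is holomorphic, this translation is a holomorphic fibre-preserving automorphism of $\pi^{-1}(U_{ij})$, and by the very definition of $\xi_{ij}$ it coincides (up to the usual sign convention) with $\exp\xi_{ij} = s_{ij}$. Thus the collection $\{\phi_i\}$ identifies the Shafarevich--Tate gluing defining $X^s$ with the trivial gluing that recovers $(X, I_\alpha)$, producing the desired fibration isomorphism. Once the intertwining property of $\phi_i$ in the second step is established, this gluing step is purely formal.
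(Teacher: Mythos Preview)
Your outline is essentially the same strategy as the paper's proof: pass from the Dolbeault representative $\alpha$ to a \v{C}ech cocycle $\{\beta_{ij}\}$ in $\Omega^1_B$, convert via $\sigma$ to vertical vector fields, and exhibit local biholomorphisms $\phi_i$ whose transition functions realise the Shafarevich--Tate cocycle. Two points, however, deserve correction.

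First, your detour through the cotangent-bundle local model, followed by a ``Hartogs-type'' extension, introduces an unnecessary gap. The action--angle model requires \emph{smooth} fibres, so it is available only over $B^\circ = B\setminus D$, the complement of the discriminant; the discriminant $D$ has codimension~$1$, and hence so does its preimage in $X$. Hartogs does not apply across codimension-one loci, so the extension claim as stated does not go through. The paper bypasses this entirely: the vertical vector field $v_i$ determined by $\iota_{v_i}\sigma = \pi^*\beta_i$ is well-defined on \emph{all} of $\pi^{-1}(U_i)$ (since $\sigma$ is nondegenerate everywhere), its time-one flow $\phi_i$ exists because fibres are compact, and the identity $\phi_i^*\sigma = \sigma + \pi^* d\beta_i$ follows from Cartan's formula globally, with no reference to any local model. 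The holomorphicity of $\phi_i\colon(\pi^{-1}(U_i),I_\alpha)\to(\pi^{-1}(U_i),I)$ is then immediate from the fact that $\phi_i^*\sigma$ has kernel $T^{0,1}_{I_\alpha}$. If you prefer to keep your local-model verification over $B^\circ$, you can salvage the extension simply by observing that $\phi_i$ is already a smooth diffeomorphism everywhere and that holomorphicity on a dense open subset implies holomorphicity by continuity of $\bar\partial\phi_i$; but this is not Hartogs.

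Second, a minor simplification: the paper chooses the local primitives $a_i$ so that $da_i = \alpha$ (equivalently $\partial a_i = 0$), not merely $\bar\partial a_i = \alpha$. This is always possible on a Stein open set (write $\alpha = d\gamma_i$, kill the $(0,1)$-part of $\gamma_i$ using $\bar\partial$-exactness), and it makes the key identity read cleanly as $\phi_i^*\sigma = \sigma + \pi^*\alpha$. With your choice one instead gets $\phi_i^*\sigma = \sigma + \pi^*\alpha + \pi^*\partial\beta_i$; the extra term is $(2,0)$ for $I_\alpha$ (because $\pi$ is $I_\alpha$-holomorphic) and hence does not affect the kernel, so your argument still works, but you should say so explicitly.
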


Are Shafarevich--Tate twists of a hyperk\"ahler manifold hyperk\"ahler themselves? We prove that indeed, they admit a holomorphic symplectic form (Subsection \ref{symplectic}) but the question of K\"ahlerness turns out to be trickier. We need to introduce a notion of {\em M-special} hyperk\"ahler manifolds which is motivated by \cite[Def. 1]{markman2014lagrangian}. A hyperk\"ahler manifold $X$ is called {\em M-special} if the subspace $H^{2,0}(X)+H^{0,2}(X)\subset H^2(X,\C)$ contains a rational class. A very general hyperk\"ahler manifold is not M-special.

\begin{theorem}[Theorems \ref{Kahlerness}, \ref{projectivity=torsion}]
\label{kahlerness theorem intro}
Let $\pi\colon X\to \mathbb P^n$ be a Lagrangian fibration on a not M-special projective hyperk\"ahler manifold $X$. Then every Shafarevich--Tate twist $X^s$ of $X$ by an element $s\in \Sha^0$ is a hyperk\"ahler manifold. Moreover, the set of $s\in\Sha^0$ such that the twist $X^s$ is a projective manifold forms a non-empty torsor over the group of torsion points of $\Sha^0$.
\end{theorem}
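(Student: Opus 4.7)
The plan is to apply Theorem \ref{ShT is deg tw intro} to reduce the question to the language of degenerate twistor deformations: for each $s \in \Sha^0$, identify $X^s \cong (X, I_\alpha)$, where $\alpha \in H^{1,1}(\mathbb P^n, \C) \cong \C$ represents the image of $s$. This turns the problem into the study of the holomorphic one-parameter family $\{(X, I_\alpha)\}_{\alpha \in \C}$. Each member of this family is diffeomorphic to $X$, hence simply connected, and carries a holomorphic symplectic form by Subsection \ref{symplectic}, so only K\"ahlerness in the first assertion, and the existence of a polarization in the second, remain to check.

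For K\"ahlerness, I would argue at the level of periods. Let $\beta := \pi^* h$ for $h$ the hyperplane class of $\mathbb P^n$; since $h^{n+1}=0$, the Beauville--Bogomolov--Fujiki form satisfies $q(\beta,\beta) = 0$, and $q(\beta,\sigma) = 0$ because $\beta$ is of type $(1,1)$ on $X$, so the period of $(X, I_\alpha)$ lies on the affine line $[\sigma + \alpha\beta]$. A real class $v \in H^2(X, \R)$ is of type $(1,1)$ on $(X, I_\alpha)$ iff $q(v, \sigma) + \alpha\, q(v, \beta) = 0$, and a direct computation shows the real $(1,1)$-part inherits a BBF form of signature $(1, b_2 - 3)$, so its positive cone is always non-empty, and at $\alpha = 0$ this positive cone contains the K\"ahler cone of the projective manifold $X$. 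The task is then to propagate K\"ahlerness to every $\alpha$. For this, I would combine Huybrechts' description of the K\"ahler cone of a hyperk\"ahler manifold as the positive cone minus walls associated to MBM classes with the observation (checked directly from $\sigma_\alpha = \sigma + \alpha\beta$) that non-M-speciality is preserved along the family. The non-M-special hypothesis should then guarantee that the twistor line avoids the period strata where the K\"ahler cone could be walled out, leaving a non-empty K\"ahler chamber for every $\alpha$.

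For the projectivity assertion, $(X, I_\alpha)$ is projective if and only if it admits an integral class $v \in H^2(X, \Z)$ of type $(1,1)$ with $q(v, v) > 0$. Given $v$ with $q(v,\beta) \neq 0$, the unique value of $\alpha$ at which $v$ becomes of type $(1,1)$ is $\alpha_v := -q(v, \sigma)/q(v, \beta)$. For $v$ already of type $(1,1)$ on $X$, this gives $\alpha_v = 0$, and since $X$ itself is projective, the element $0 \in \Sha^0$ lies in the set of projective twists, which already yields non-emptiness of the torsor. To identify this set with the torsion subgroup of $\Sha^0 = \C/\Lambda$, I would use the Leray spectral sequence for $\pi$ to describe $\Lambda$ intrinsically in terms of classes pulled back from relative integral cohomology, and then verify that $\alpha_v \bmod \Lambda$ lies in the torsion of $\C/\Lambda$ for every candidate $v$, and conversely that every torsion class is realised by some such $v$. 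The torsor structure then follows from the additivity of the Shafarevich--Tate twisting operation.

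The principal obstacle I anticipate is the K\"ahlerness step. The signature argument produces only a non-empty positive cone in $H^{1,1}_\alpha \cap H^2(X,\R)$; upgrading this to an honest K\"ahler class for every $\alpha$ requires combining the global Torelli theorem with a careful tracking of which MBM walls could be crossed as $\alpha$ moves along the complex line, and verifying that the non-M-special hypothesis excludes precisely such bad crossings. The projectivity half is more algebraic in nature and should follow the pattern of classical Shafarevich--Tate arguments for elliptic fibrations once the identification of $\Lambda$ with the image of the relative integral cohomology is in place.
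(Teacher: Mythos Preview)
Your K\"ahlerness argument has a genuine circularity. Huybrechts' description of the K\"ahler cone as the positive cone minus MBM walls, and the global Torelli theorem, are statements about hyperk\"ahler manifolds; they presuppose that the complex structure in question is already K\"ahler. The degenerate twistor deformation $(X,I_\alpha)$ is a priori only a holomorphic symplectic manifold, so you cannot invoke this machinery to \emph{conclude} K\"ahlerness. Knowing that the period $[\sigma+\alpha\beta]$ lies in the period domain only tells you that \emph{some} hyperk\"ahler structure has that period; it does not identify that structure with $(X,I_\alpha)$.

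The paper's argument bypasses this entirely and is much simpler. The key point you are missing is that the non-M-special hypothesis is used not to control walls, but to force the lattice $\Lambda:=\operatorname{im}(H^1(B,\Gamma)\to\tSha)$ to be \emph{dense} in $\tSha\cong\C$. One shows (via the identification $\Gamma\otimes\Q\cong R^1\pi_*\Q$ and the Leray spectral sequence) that $\Lambda\otimes\Q$ is the image of $W_\Z/\eta$ under the Hodge projection to $H^{0,2}(X)$, and then a short Hodge-theoretic lemma says this image is dense precisely when $W/\eta$ (equivalently, by projectivity of $X$, the structure on $H^2(X)$) is not M-special. Now combine: K\"ahlerness is open in families, so it holds on some open $U\ni 0$ in $\tSha$; but $X^s\cong X^{s+\lambda}$ for every $\lambda\in\Lambda$, and $\Lambda$ is dense, so $U+\Lambda=\tSha$ and every twist is K\"ahler. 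No Torelli, no MBM classes.

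Your projectivity sketch is closer to the paper's argument, which also pivots on the equation $q(v,\sigma)+\alpha\,q(v,\eta)=0$ together with Huybrechts' criterion that a hyperk\"ahler manifold is projective iff it carries a rational $(1,1)$-class of positive square. One subtlety you do not address: the torsion characterization of $\Sha^0$ (once $\Lambda$ is identified as above) says $t\bar\sigma$ is torsion iff $t=q(l,\sigma)$ for some $l\in W_\Q$, not merely $l\in\eta^\perp_\Q$. When you compare ample classes on $X$ and on $X^s$ and take their difference, you only get a class in $\eta^\perp_\Q$; an extra step is needed to correct it into $W_\Q$ by subtracting a suitable element of $W^\perp\cap H^{1,1}_\Q$, using that $q$ is negative definite on a rational hyperplane of $W^\perp$ transverse to $\eta$. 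Without this adjustment the $(1)\Rightarrow(2)$ direction does not close.
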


In order to prove Theorem \ref{kahlerness theorem intro}, we first study the sheaf $\Gamma$. It turns out that $\Gamma\o\Q$ is isomorphic to $R^1\pi_*\Q$ (Proposition \ref{Gamma is almost R^1pi_*Z}). The Leray spectral sequence helps us to compute some cohomology groups of $R^1\pi_*\Q$. We derive from that a description of the group $\Sha^0$ solely in terms of the Hodge structure on $H^2(X,\Z)$ (Corollary \ref{description of sha zero}). As we have already seen, the group $\Sha^0$ is isomorphic to $\C/\Lambda$ for a finitely generated abelian group $\Lambda$. It turns out that $X$ is not M-special if and only if $\Lambda$ is a dense subgroup of $\C$. We use that the set of twists of $X$ that are K\"ahler manifolds is open and $\Lambda$-invariant to conclude that all Shafarevich--Tate twists of $X$ are K\"ahler. The statement about projectivity follows by carefully applying Huybrechts' criterion \cite[Thm. 3.11]{huybrechts2003compact}.

Since we published this work as a preprint, the results of Theorem \ref{kahlerness theorem intro} were improved by the first author \cite{abasheva2024shafarevich} and independently in \cite{soldatenkov2024hermitian}. Their works show that Theorem \ref{kahlerness theorem intro} holds even without the technical assumption that $X$ is not $M$-special.

In Section \ref{obstruction} we study whether there exists a Shafarevich--Tate twist $X^s$ of $\pi\colon X\to \mathbb P^n$ that admits a holomorphic section. We show that the obstructions to the existence of such a twist lie in $H^2(\mathbb P^n,\Gamma) = \Sha/\Sha^0$ (Corollary \ref{main property of a}). Assume that the fibers of $\pi$ are reduced and irreducible. We show that the group $H^2(\mathbb P^n,\Gamma)\o\Q$ can be embedded into $H^3(X,\Q)$ (Corollary \ref{restriction is surjective}). This implies the following theorem.

\begin{theorem}[Theorem \ref{when a vanishes}]\label{sections intro}
Let $\pi \colon X \to \mathbb P^n$ be a Lagrangian fibration on a compact hyperk\"ahler manifold over a smooth base. Assume that the following holds:
\begin{itemize}
\item the fibers of $\pi$ are reduced and irreducible;
\item $H^3(X, \Q) =0$;
\item $H^2(\mathbb P^n, \Gamma)$ is torsion-free.
\end{itemize}
Then there exists a unique $s\in \Sha^0$ such that $\pi^s \colon X^s \to \mathbb P^n$ admits a holomorphic section. In particular, the fibration $\pi$ can be deformed to a fibration with a holomorphic section through the Shafarevich--Tate family.
\end{theorem}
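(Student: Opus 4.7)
The plan is to reduce the theorem to a purely cohomological vanishing statement via the obstruction theory of Section~\ref{obstruction}, and then to invoke the two cohomological hypotheses to establish that vanishing.

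First I would invoke Corollary~\ref{main property of a}, which attaches to the Lagrangian fibration $\pi$ an obstruction class
$$
a \in H^2(\mathbb P^n, \Gamma) = \Sha/\Sha^0
$$
measuring the failure of section-admission modulo twisting by $\Sha^0$. Its key property, as packaged in that corollary, is that $a = 0$ is equivalent to the existence of a unique $s \in \Sha^0$ with $\pi^s \colon X^s \to \mathbb P^n$ admitting a holomorphic section. The entire theorem thus reduces to proving $H^2(\mathbb P^n, \Gamma) = 0$.

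To establish this vanishing, I would combine Corollary~\ref{restriction is surjective}, which under the reducedness and irreducibility of the fibers produces an injection $H^2(\mathbb P^n, \Gamma)\otimes \Q \hookrightarrow H^3(X, \Q)$, with the hypothesis $H^3(X, \Q) = 0$. This forces $H^2(\mathbb P^n, \Gamma)\otimes \Q = 0$, so that $H^2(\mathbb P^n, \Gamma)$ is a torsion group. The assumption that $H^2(\mathbb P^n, \Gamma)$ is torsion-free then yields $H^2(\mathbb P^n, \Gamma) = 0$ and hence $a = 0$, which gives the main clause of the theorem.

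The final sentence --- that $\pi$ can be deformed through the Shafarevich--Tate family to the section-admitting $\pi^s$ --- follows formally from the identification $\Sha^0 \cong \C/\Lambda$: pick a lift $\tilde s \in \C$ of $s$, consider the family $t \mapsto t\tilde s$ for $t \in [0,1]$, and invoke Theorem~\ref{ShT is deg tw intro} to identify this path with a holomorphic one-parameter family of degenerate twistor deformations interpolating $\pi$ and $\pi^s$.

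The main obstacle is not in the present theorem but in the structural inputs it relies on, Corollaries~\ref{main property of a} and \ref{restriction is surjective}; granting these, the proof is a short assembly consisting of one injection into $H^3(X, \Q)$ and the torsion-free hypothesis.
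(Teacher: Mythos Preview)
Your proposal is correct and follows essentially the same route as the paper: reduce via Corollary~\ref{main property of a} to the vanishing of $H^2(\mathbb P^n,\Gamma)$, then combine the injection into $H^3(X,\Q)$ with the two cohomological hypotheses. The only minor imprecision is that Corollary~\ref{restriction is surjective} literally gives an injection of $H^2(B,R^1\pi_*\Q_X)$ into $H^3(X,\Q)$; one must also invoke Proposition~\ref{Gamma is almost R^1pi_*Z} (the isomorphism $\Gamma_\Q\simeq R^1\pi_*\Q_X$) to identify this source with $H^2(\mathbb P^n,\Gamma)\otimes\Q$, exactly as the paper does.
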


At the core of the proof of Theorem \ref{sections intro} is a Hard Lefschetz-type result for fibers of Lagrangian fibrations (Corollary \ref{Lefschetz}).  The argument is inspired by the one used by Deligne to prove that the Leray spectral sequence of a smooth projective submersion degenerates on the second page \cite{deligne1968theoreme}.

\hfill

The paper is organized as follows. The notions of a Shafarevich--Tate group and a degenerate twistor deformation, as well as basic facts about Lagrangian deformations are recalled in Section \ref{prelim}. The main result of Section \ref{ShT-properties} is a refined version of Theorem \ref{ShT is deg tw intro}. We also prove that Shafarevich--Tate twists are holomorphic symplectic in Subsection \ref{symplectic} and study the period map for the family of Shafarevich--Tate twists in Subsection \ref{period map}. Section \ref{connected component} is concerned with the description of the group $\Sha^0$ in terms of topological invariants of $X$. Its results are used in Section \ref{applications} to prove Theorem \ref{kahlerness theorem intro}. In Section \ref{obstruction} we discuss obstructions to existence of sections and prove Theorem \ref{sections intro}. 

\hfill

\textbf{Acknowledgments.} We are deeply grateful to Dmitry Kaledin, Rodion D\'eev, and Andrey Soldatenkov for stimulating conversations and encouragement. A.A. thanks Giulia Sacc\`a and Claire Voisin for their interest and helpful remarks, and Raymond Cheng for reading a draft of this paper. V.R. thanks Olivier Benoist for his insights about the content of Section \ref{obstruction}. The work on this project started during our stay at the summer school ''Algebra \& Geometry--2021'' in Yaroslavl, Russia organized by the Laboratory of Algebraic Geometry of Higher School of Economics and Yaroslavl State Pedagogical University. We are grateful to the organizers for their hospitality and inspiring atmosphere. A.A. was supported in part by Simons Foundation.


\section{Preliminaries}\label{prelim}

\subsection{Lagrangian fibrations}\label{Geometry of Lagrangian fibrations}

Let $X$ be a hyperk\"ahler manifold. Fix a holomorphic symplectic form $\sigma$ on $X$. Consider a map $\pi\colon X\to B$ to a normal base $B$ of dimension half of that of $X$. A map $\pi \colon X \to B$ is called \textit{a Lagrangian fibration} if it is surjective with connected fibers and the restriction of $\sigma$ to every smooth fiber vanishes. We always assume that the base $B$ of a Lagrangian fibration $\pi$ is isomorphic to $\mathbb P^n$. It is the case in all known examples (see e.g. \cite[Subection 1.2]{huybrechts2022lagrangian}). We refer the reader to \cite{huybrechts2003compact} for  basics on hyperk\"ahler manifolds and Lagrangian fibrations.

Throughout this paper, we write $X$ for a compact hyperk\"ahler manifold, $\sigma$ for a holomorphic symplectic form on $X$ and $\pi\colon X\to B$ for a Lagrangian fibration. For  $b\in B$ we denote the fiber of $\pi$ over $b$ by $F_b$. The subset of the regular values of $\pi$ is denoted by $B^\circ$. This is a Zariski open subset of $B$ and $D:= B \,\setminus\, B^{\circ}$ is known to be a divisor \cite[Prop. 3.1]{hwang2009characteristic}.

Smooth fibers of a Lagrangian fibration $\pi\colon X\to B$ are complex tori \cite{markushevich1986integrable}. One can show that they are abelian varieties even if $X$ is not projective \cite{campana2006isotrivialite}.

Let $X$ be a hyperk\"ahler manifold. Then $H^2(X, \Z)$ carries a non-degenerate symmetric bilinear form $q \colon H^2(X, \Z) \o H^2(X, \Z) \to \Z$ called the {\em Beauville--Bogomolov--Fujiki form} \cite{beauville1983varietes,fujiki1987rham}. It satisfies the following properties:
\begin{itemize}
\item[(1)] The Hodge decomposition
\[
H^2(X, \C) = H^{2,0}(X) \oplus H^{1,1}(X) \oplus H^{0,2}(X)
\]
is orthogonal with respect to $q$. The restriction of $q$ to $H^{2,0}(X) \oplus H^{0,2}(X)$ is a positive Hermitian form and the restriction of $q$ to $H^{1,1}(X)$ has the signature $(1, h^{1,1}(X)-1)$;
\item[(2)] ({\em Fujiki formula}) There exists a non-zero constant $c_X$ such that
\begin{equation*}
\label{Fujiki}
q(\omega, \omega)^n = c_X \int_X \omega^{2n}
\end{equation*}
\end{itemize}

Let $\eta = \pi^*[H]\in H^2(X,\mathbb Z)$ be the pullback of the class of a hyperplane. It follows from the Fujiki formula that $q(\eta,\eta) = 0$.

\begin{prop}[{\cite[Section 2]{oguiso2009picard}}]
\label{restriction}
Let $\pi \colon X \to B$ be a Lagrangian fibration, and $\eta\in H^2(X,\mathbb Z)$ as above. Then the restriction map $H^2(X,\mathbb Q) \to H^2(F_b, \mathbb Q)$ has rank $1$ for every smooth fiber $F_b=\pi^{-1}(b)\subset X$. Moreover, the kernel of this map is precisely 
$$
\eta^{\perp}:= \{ v \in H^2(X, \mathbb Q) \ | q(v, \eta)=0\}.
$$
\end{prop}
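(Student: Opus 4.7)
My plan is to prove both containments $\ker(r)\subseteq\eta^\perp$ and $\eta^\perp\subseteq\ker(r)$, where $r\colon H^2(X,\mathbb{Q})\to H^2(F_b,\mathbb{Q})$ denotes the restriction. Combined with the fact that $\eta^\perp$ has codimension one in $H^2(X,\mathbb{Q})$ by non-degeneracy of $q$ and $\eta\neq 0$, this will simultaneously pin down the kernel and the rank.

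The computational engine is the following family of Fujiki-type identities. Since $b\in B^{\circ}$ is a regular value of $\pi$ and $[H]^n=[\mathrm{pt}]$ on $\mathbb{P}^n$, the fiber class equals $[F_b]=\pi^*[H]^n=\eta^n$. For any $v,u\in H^2(X,\mathbb{Q})$ and $0\leq k\leq n$, the polarized Fujiki formula expresses $\int_X v^k\cdot u^{n-k}\cdot\eta^n$ as a sum over perfect matchings of $2n$ factors weighted by products of $q$-pairings. Because $q(\eta,\eta)=0$, any matching containing an $\eta\eta$-pair contributes zero; the surviving matchings must pair each of the $n$ copies of $\eta$ with a non-$\eta$ factor. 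Hence
\[
\int_{F_b} v|_{F_b}^k\cdot u|_{F_b}^{n-k}=\int_X v^k u^{n-k}\eta^n=C\cdot q(v,\eta)^k\, q(u,\eta)^{n-k}
\]
for a nonzero universal constant $C=C(n,c_X)$. Specializing to $u=v$, $k=n$ and $v\in\ker(r)$ gives $q(v,\eta)^n=0$, i.e.\ $\ker(r)\subseteq\eta^\perp$.

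For the reverse inclusion, I would first reduce to $(1,1)$-classes: the $q$-orthogonality of the Hodge decomposition places $H^{2,0}(X)\oplus H^{0,2}(X)$ inside $\eta^\perp$, and the Lagrangian condition $\sigma|_{F_b}=0$ places them inside $\ker(r)$. So it suffices to treat a real class $v\in H^{1,1}(X,\mathbb{R})$ with $q(v,\eta)=0$. Fix a K\"ahler class $\omega_X$ on $X$; then $\omega:=\omega_X|_{F_b}$ is a K\"ahler class on the compact K\"ahler manifold $F_b$, and applying the displayed identity with $u=\omega_X$ and $k=1,2$ yields
\[
\int_{F_b} v|_{F_b}\cdot\omega^{n-1}=0\qquad\text{and}\qquad\int_{F_b} v|_{F_b}^2\cdot\omega^{n-2}=0.
\]
The first identity says $v|_{F_b}$ is $\omega$-primitive; together with the second, the Hodge--Riemann bilinear relations (equivalently, the Hodge index theorem on primitive real $(1,1)$-classes) on $F_b$ force $v|_{F_b}=0$.

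The main obstacle I foresee is exactly this second inclusion: the vanishing of $\int_{F_b}v|_{F_b}^n$ on its own is much weaker than $v|_{F_b}=0$, so a positivity input beyond Fujiki is required. Hodge--Riemann on $F_b$ supplies it, but crucially uses that $F_b$ inherits a K\"ahler class from $X$. The case $n=1$ falls out at once, since then $H^2(F_b,\mathbb{Q})\cong\mathbb{Q}$ and only the $k=1$ case of the identity is needed.
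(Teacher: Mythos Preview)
Your argument is correct and is essentially the proof given in the cited reference \cite[Section~2]{Og}; the paper itself does not reprove the proposition but defers entirely to Oguiso. The key steps---the polarized Fujiki identity $\int_{F_b}v|_{F_b}^{k}u|_{F_b}^{\,n-k}=C\,q(v,\eta)^{k}q(u,\eta)^{n-k}$ (valid because $q(\eta,\eta)=0$ forces every $\eta$ to pair with a non-$\eta$ factor), the reduction to real $(1,1)$-classes via the Lagrangian condition, and the appeal to the Hodge--Riemann relations on the primitive $(1,1)$-part of $H^2(F_b)$---are exactly Oguiso's; your handling of the $n=1$ edge case is also fine.
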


\begin{cor}
\label{relative polarisation}
With the assumptions of Proposition \ref{restriction}, there is a K\"ahler class $[\omega]$ on $X$ that restricts to an integral class on every smooth fiber $F_b$.
\end{cor}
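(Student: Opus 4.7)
The plan is to take any K\"ahler class $[\omega_0]$ on $X$ and rescale it by a single positive real number so that the result restricts to an integral class on every smooth fiber. The work is in checking that one rescaling factor does the job simultaneously for all fibers.

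First, fix a smooth fiber $F_{b_0}$. Proposition \ref{restriction} tells us that the image of $H^2(X,\Q)\to H^2(F_{b_0},\Q)$ is one-dimensional; since $H^2(F_{b_0},\Z)$ is torsion-free, the image of the integral restriction map is an infinite cyclic subgroup, with a primitive generator $\gamma_{b_0}\in H^2(F_{b_0},\Z)$. The restriction $[\omega_0|_{F_{b_0}}]$ is a K\"ahler class on the abelian variety $F_{b_0}$, so in particular it is a nonzero element of the real line spanned by $\gamma_{b_0}$. Choosing the sign of $\gamma_{b_0}$ appropriately, we may write $[\omega_0|_{F_{b_0}}]=t\,\gamma_{b_0}$ with $t>0$. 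Setting $\omega:=\omega_0/t$ gives a K\"ahler class on $X$ whose restriction to $F_{b_0}$ is exactly $\gamma_{b_0}$.

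Next, I would argue that $[\omega|_{F_b}]$ is integral for every $b\in B^\circ$. The key observation is that both $[\omega|_{F_b}]$ and the primitive generator $\gamma_b$ of the image of $H^2(X,\Z)\to H^2(F_b,\Z)$ vary as locally constant sections of $R^2\pi_*\R|_{B^\circ}$: the former because it is the image of a fixed class $[\omega]$ under the edge map in the Leray spectral sequence, the latter because the image of $H^2(X,\Z)$ inside each fiber is a monodromy-invariant rank-one subgroup. Hence the scalar $t(b)$ defined by $[\omega|_{F_b}]=t(b)\,\gamma_b$ is locally constant on the connected space $B^\circ$, so $t(b)\equiv 1$ and $[\omega|_{F_b}]=\gamma_b$ is integral.

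The only real subtlety I expect is the sign ambiguity in the choice of primitive generator $\gamma_b$ under monodromy. This is harmless: the K\"ahler condition forces $t(b)>0$ continuously, so monodromy cannot flip the sign of $\gamma_b$, and once the sign has been fixed at $b_0$ it extends coherently to all of $B^\circ$.
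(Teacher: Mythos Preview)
Your argument is correct, and the strategy---take an arbitrary K\"ahler class and rescale by a single positive real number---is the same as the paper's. The difference is in how the ``one rescaling works for all fibers'' step is handled. You work fiber by fiber: you fix a primitive generator $\gamma_b$ in each $H^2(F_b,\Z)$ and then use a local-constancy/monodromy argument on $B^\circ$ to show the ratio $t(b)$ is identically $1$. The paper avoids this detour by using the second half of Proposition~\ref{restriction}: the kernel of every restriction map $H^2(X,\Q)\to H^2(F_b,\Q)$ is the \emph{same} subspace $\eta^\perp$, so all these maps factor through the single one-dimensional quotient $H^2(X,\Q)/\eta^\perp$, which is generated by an integral class. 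Rescaling $[\omega]$ so that its image in this quotient is integral then settles all fibers at once, with no reference to the local system $R^2\pi_*\R|_{B^\circ}$ or to sign issues. Your route has the minor advantage that it only uses the rank-one statement in Proposition~\ref{restriction} and not the identification of the kernel, but the paper's argument is shorter.
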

\begin{proof}
By Proposition \ref{restriction} the restriction map $H^2(X,\mathbb Q)\to H^2(F_b,\mathbb Q)$, for any $b\in B^\circ$, factors through the one-dimensional quotient $H^2(X,\mathbb Q)/\eta^\perp$. This vector space is generated by an integral class. Every K\"ahler class $[\omega]$ on $X$ restricts to a non-zero class on a fiber. Therefore, the class $[\omega]$ restricts to an integral class after appropriate scaling.
\end{proof}

The following result was proven by de Cataldo, Rapagnetta and Sacc\`a \cite[Lemma 2.3.1]{cataldo2021hodge} under additional assumptions and by the first author in the general case \cite[Theorem 2.1.11]{abasheva2024shafarevich}.

\begin{lemma}\label{tangent cotangent isomorphism}
 Let $\pi\colon X\to B$ be a Lagrangian fibration. Consider the isomorphism $\sigma\colon \Omega^1_X\xrightarrow{\sim} T_X$ induced by the holomorphic symplectic form $\sigma$. Then the map $\sigma$ sends $\pi^*\Omega^1_B\subset \Omega^1_X$ isomorphically to $T_{X/B}:=\ker({d\pi\colon T_X\to \pi^*T_B})$. In particular, the sheaves $\Omega^1_B$ and $\pi_{*}T_{X/B}$ are isomorphic and $\pi_*T_{X/B}$ is torsion-free.
\end{lemma}

Assume that $X$ is projective. Let $\omega$ be a closed $(1,1)$-form on $X$ that represents a rational class $[\omega] \in H^{1,1}_\Q(X)$. The contraction of $\omega$ with holomorphic vector fields defines a map
$$
    \tilde{\omega}\colon \pi_*T_{X/B}\to R^1\pi_*\O_X.
$$
We get the following maps by taking the exterior powers of $\tilde{\omega}$:
$$
    \widetilde{\omega}^i\colon \Omega^i_B\cong\Lambda^i(\pi_*T_{X/B})\to \Lambda^iR^1\pi_*\O_X \to R^i\pi_*\O_X, \quad i=1, \ldots, n.
$$

\begin{prop}
[{\cite[Thm. 1.2]{matsushita2005higher}}]
\label{Matsushita isomorphism}
Let $\pi \colon X \to B$ be a Lagrangian fibration on a  projective hyperk\"ahler manifold $X$ over a smooth base $B$. Then the map
\[
\widetilde{\omega}^i\colon \Omega^i_B\to R^i\pi_*\O_X.
\]
is an isomorphism for every $i=1,\dots,n$. In particular, the sheaves $R^i\pi_*\O_X$ are locally free. 
\end{prop}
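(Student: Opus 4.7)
The plan is to establish the statement first on the regular locus $B^\circ$ by reducing to the classical polarization theory for abelian varieties, and then extend across the discriminant divisor $D = B \setminus B^\circ$ using Koll\'ar-type torsion-freeness together with the equidimensionality of Lagrangian fibrations.

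For $b \in B^\circ$ the fiber $F_b$ is an abelian variety, and by Corollary \ref{relative polarisation} we may arrange $[\omega|_{F_b}]$ to be an integral polarization class. Under the identification $\pi_*T_{X/B} \o k(b) \cong H^0(F_b, T_{F_b})$ (valid on $B^\circ$ since $\pi$ is smooth there), the fiber of $\widetilde{\omega}$ at $b$ is exactly the polarization isomorphism $H^0(F_b, T_{F_b}) \to H^1(F_b, \O_{F_b})$ induced by $[\omega|_{F_b}]$. Because $F_b$ is abelian, cup product yields natural isomorphisms $\Lambda^i H^1(F_b, \O_{F_b}) \cong H^i(F_b, \O_{F_b})$, so $\widetilde{\omega}^i \o k(b) = \Lambda^i(\widetilde{\omega}\o k(b))$ is an iso for every $i$. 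Grauert's theorem then ensures $R^i\pi_*\O_X$ is locally free of rank $\binom{n}{i}$ on $B^\circ$ and $\widetilde{\omega}^i|_{B^\circ}$ is an isomorphism.

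To extend across $D$, I would combine two global ingredients. Since $X$ is holomorphic symplectic, $\omega_X \cong \O_X$, so Koll\'ar's torsion-freeness theorem gives that $R^i\pi_*\O_X = R^i\pi_*\omega_X$ is torsion-free on $B$. Matsushita's theorem that Lagrangian fibrations are equidimensional allows one to combine this with base-change and a reflexivity argument in the spirit of Lemma \ref{pushforward of reflexive} to conclude that $R^i\pi_*\O_X$ is a reflexive sheaf on the smooth base $B$, of generic rank $\binom{n}{i}$. Both source and target of $\widetilde{\omega}^i$ are then reflexive of the same rank with an isomorphism on $B^\circ$; to upgrade this to a global isomorphism one performs a local analysis along a general point of each irreducible component of $D$, exploiting the explicit normal form of $\pi$ there (a semistable degeneration of abelian varieties on which the polarization map extends invertibly).

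The main obstacle is the extension across the discriminant. The fiberwise computation on $B^\circ$ is essentially classical, reducing to the polarization map on an abelian variety and the fact that its Dolbeault cohomology is an exterior algebra. All of the real difficulty sits in the singular-fiber analysis, which is where the hyperk\"ahler hypothesis enters crucially: Matsushita's equidimensionality theorem and Koll\'ar's vanishing/torsion-freeness are both essential, and without them one cannot rule out the cokernel of $\widetilde{\omega}^i$ having torsion supported along $D$.
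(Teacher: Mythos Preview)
The paper does not prove this proposition: it is quoted verbatim from Matsushita's paper \cite{Mats05} and used as a black box (with the Soldatenkov--Verbitsky extension \cite{SV} noted in the remark following it). So there is no ``paper's own proof'' to compare against.

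That said, your sketch has two genuine gaps that would need to be closed before it could stand as an independent proof.

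First, the step from ``$R^i\pi_*\O_X$ is torsion-free'' to ``$R^i\pi_*\O_X$ is reflexive'' is not justified. Koll\'ar's theorem gives torsion-freeness of $R^i\pi_*\omega_X$, but torsion-free does not imply reflexive in general, and Lemma \ref{pushforward of reflexive} concerns $\pi_*$ of a reflexive sheaf, not higher direct images of $\O_X$. Matsushita in fact proves the stronger statement that these sheaves are locally free, and this requires substantially more input (in his paper, variation of Hodge structure arguments and Koll\'ar's decomposition, or alternatively Saito's theory).

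Second, even granting reflexivity of source and target, an isomorphism outside a \emph{divisor} does not extend to an isomorphism of reflexive sheaves; one needs the map to be an isomorphism outside codimension two. You acknowledge this and propose a ``local analysis along a general point of each irreducible component of $D$'', but that analysis \emph{is} the content of the theorem --- the semistable normal form you allude to is not available without further hypotheses, and verifying that the polarization map extends invertibly over a general point of $D$ is precisely what Matsushita's argument accomplishes by other means. As written, the proposal identifies the correct obstacle but defers it rather than resolving it.
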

\begin{rmk}
\label{not projective}
The assumption of projectivity of $X$ can be dropped, as was shown in \cite{soldatenkov2021moser}.

Let $[\omega]$ be a K\"ahler class on $X$ as in Corollary \ref{relative polarisation}. Then the restriction of the isomorphism $\pi_*T_{X/B}\to R^1\pi_*\mathcal O_X$ to $B^\circ$ is given by the contraction of $\omega$ with holomorphic vector fields \cite[Cor. 3.7]{soldatenkov2021moser}.
\end{rmk}

\begin{cor}\label{isomorphism of bases}
The following cohomology groups are isomorphic:
 $$H^1(B, \pi_*T_{X/B}) \simeq H^1(B, \Omega^1_B) \simeq H^1(B, R^1\pi_*\O_X) \simeq \C.$$
\end{cor}
\begin{proof}
The sheaves $\pi^*T_{X/B}$, $\Omega^1_B$, and $R^1\pi_*\mathcal O_X$ are isomorphic by Lemma \ref{tangent cotangent isomorphism} and Proposition \ref{Matsushita isomorphism}. The last isomorphism holds since $h^{1,1}(\mathbb P^n) =1$.
\end{proof}


\subsection{Shafarevich--Tate groups}\label{twists}

Let $\pi \colon X \to B$ be a surjective holomorphic map of complex manifolds. Consider the sheaf $Aut_{X/B}$ of \textit{vertical automorphisms} of $X$ over $B$. This is a sheaf of groups on $B$ defined as
$$
Aut_{X/B}(U) := \{\phi\colon \pi^{-1}(U)\:\tilde{\to}\: \pi^{-1}(U)\:|\:\pi\circ\phi = \pi\}.
$$
Let us define the subsheaf $Aut^0_{X/B}\subset Aut_{X/B}$ as follows. It consists of those vertical automorphisms $\phi$ whose restriction to every fiber $F_b$ lies in $Aut^0(F_b)$. Pick a class $s \in H^1(B, Aut^0_{X/B})$. We can represent it by a \v{C}ech cocycle $s_{ij}$ for an open cover $B = \bigcup U_i$. We are given an automorphism $s_{ij} \colon \pi^{-1}(U_{ij}) \to \pi^{-1}(U_{ij})$ for each pairwise intersection $U_{ij} := U_i \cap U_j$ . Let us glue a new complex manifold as
$$X^{s} := \bigsqcup \pi^{-1}(U_i) \Bigg / x\in\pi^{-1}(U_i) \sim s_{ij}(x)\in\pi^{-1}(U_j).$$
The manifold $X^s$ is equipped with a natural fibration $\pi^s\colon X^s\to B$. The isomorphism class of the fibration $\pi^s\colon X^s\to B$ depends only on the class of $s$ in $H^1(B,Aut^0_{X/B})$. The manifold $X^s$ is  called the {\it twist of $X$ by the class $s \in H^1(B, Aut^0_{X/B})$}. 

Suppose from now on that a general fiber of $\pi$ is a complex torus. Let $B^{\circ} \subset B$ be the set of values of $\pi$ for which the fiber $\pi^{-1}(b)$ is a complex torus.

\begin{lemma}
\label{commutativity}
Let $\pi\colon X\to B$ be as above. Then the sheaf $Aut^0_{X/B}$ is a sheaf of commutative groups.
\end{lemma}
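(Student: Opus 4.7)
The plan is to show that for any two local sections $\alpha, \beta \in Aut^0_{M/S}(U)$ over an open $U \subset S$, the commutator $\phi := \alpha\beta\alpha^{-1}\beta^{-1}$ equals the identity vertical automorphism of $\pi^{-1}(U)$. Since the statement is local on $S$, I may shrink $U$ freely; by definition of $Aut^0_{M/S}$, I can then write $\alpha = \exp(v)$ and $\beta = \exp(w)$ for some $v, w \in (\pi_*T_{M/S})(U)$, so that $\phi$ is a holomorphic self-map of $\pi^{-1}(U)$ commuting with $\pi$.

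First I would verify that $\phi$ restricts to the identity on $F_b$ for every $b$ in the dense open subset $U^\circ := U \cap S^\circ$. For such $b$, the fiber $F_b$ is a complex torus $V_b/\Lambda_b$, so every global holomorphic vector field on $F_b$ is translation-invariant and the evaluation map $V_b \xrightarrow{\sim} H^0(F_b, T_{F_b})$ is an isomorphism of abelian Lie algebras. Under this identification, the exponential $H^0(F_b, T_{F_b}) \to Aut(F_b)$ factors as the quotient $V_b \twoheadrightarrow V_b/\Lambda_b = F_b$ followed by the inclusion of $F_b$ as the translation subgroup of $Aut(F_b)$. Since translations on an abelian group commute, $\alpha|_{F_b}$ and $\beta|_{F_b}$ commute, and hence $\phi|_{F_b} = \mathrm{id}_{F_b}$.

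Finally, I would propagate this fiberwise identity to all of $\pi^{-1}(U)$. The fixed locus $\{x \in \pi^{-1}(U) : \phi(x) = x\}$ is a closed analytic subset of $\pi^{-1}(U)$, and by the previous step it contains the open subset $\pi^{-1}(U^\circ)$. The hard part will be this density step: one needs $\pi^{-1}(U^\circ)$ to be dense in $\pi^{-1}(U)$, equivalently that $\pi^{-1}(U \setminus U^\circ)$ has no interior. For the Lagrangian fibrations on compact hyperk\"ahler manifolds considered throughout this paper, $\pi$ is equidimensional with $n$-dimensional fibers, so $\pi^{-1}(U \setminus U^\circ)$ has codimension at least one; the fixed locus of $\phi$ must then coincide with all of $\pi^{-1}(U)$, giving $\phi = \mathrm{id}$ and hence the commutativity of $\alpha$ and $\beta$.
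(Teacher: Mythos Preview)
Your proof is correct and follows essentially the same approach as the paper: show the commutator is trivial over the locus $U^\circ$ where fibers are complex tori (using that exponentials of vertical vector fields act by translations there, hence commute), then use density of $\pi^{-1}(U^\circ)$ in $\pi^{-1}(U)$ to conclude. The paper's version is terser—it simply notes that $Aut^0(F_b)$ is a torus for $b\in S^\circ$ and that $S^\circ\cap U$ is dense in $U$—whereas you spell out the exponential/translation identification and the fixed-locus argument on the total space, but the substance is the same.
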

\begin{proof}
The sheaf $Aut^0_{X/B}$ is a sheaf of commutative groups at least over $B^\circ\subset B$. Indeed, for any point $b\in B^\circ$ the group $Aut^0(F_b)$ is a complex torus, hence commutative. Let $g,h\in Aut^0_{X/B}(U)$ be two local sections of $Aut^0_{X/B}$. Then $[g,h]|_{B^\circ\cap U}$ is trivial. Therefore, $[g,h]$ is itself trivial, because $B^{\circ} \cap U$ is dense in $U$.
\end{proof}

\begin{df} [{\cite[Ch. I, Section 1.5.1]{friedman1994smooth}}]
\label{definition of Sha}
The group $H^1(B,Aut^0_{X/B})$ is called the {\it Shafarevich--Tate group} of the fibration $\pi\colon X\to B$ and is denoted by $\Sha$.
\end{df}

See \cite{dolgachev1994elliptic,kodaira1963compact,markman2014lagrangian} for applications of Shafarevich--Tate groups in study of elliptic and Lagrangian fibrations.


\subsection{Degenerate twistor deformations}\label{parabolae explained}

The main references for this Subsection are \cite{verbitsky2015degenerate,bogomolov2022sections,soldatenkov2021moser}. Let $X$ be a hyperk\"ahler manifold. Fix a holomorphic symplectic form $\sigma \in H^{0}(X, \Omega^2_X)$. The key observation here is that the complex structure of $X$ is determined by the form $\sigma$, viewed as a smooth $\C$-valued $2$-form on $X$. Namely, the complex structure is uniquely determined by the subbundle of $(0,1)$-vectors $T^{0,1}X \subset TX$. This subbundle can be recovered as $\ker \sigma|_{TX\o\C}$.

We can characterize all smooth complex-valued $2$-forms on $X$ that can be realized as holomorphic symplectic forms for some complex structures. 

\begin{df}[{\cite{bogomolov2022sections}}]
Let $M$ be a smooth manifold of real dimension $4n$. Consider a smooth complex-valued $2$-form $\sigma \in \Gamma(\Lambda^2T^*M \o \C)$. It is called a \textit{c-symplectic structure} if the following hold:
\begin{itemize}
\item $d\sigma = 0$;
\item $\sigma^{n+1}=0$;
\item $\sigma^n \wedge \overline{\sigma}^n \neq 0$  at each point of $M$.
\end{itemize}
\end{df}

\begin{prop} [{\cite[Prop. 3.1]{verbitsky2015degenerate}}]
Let $\sigma$ be a c-symplectic structure on $M$. Define
$$T^{0,1}M := \ker \sigma \subset TM \o \C
$$ and $T^{1,0}M:= \overline{T^{0,1}M}$. Then the following holds:
\begin{itemize}
\item $TM = T^{0,1}M \oplus T^{1,0}M$;
\item $[T^{0,1}M, T^{0,1}M] \subseteq T^{0,1}M$.
\end{itemize}
Equivalently, the operator $I_{\sigma} \colon TM \to TM$ that acts as $\i$ on $T^{1,0}M$ and as $-\i$ on $T^{0,1}M$  is  an integrable complex structure on $M$. The form $\sigma$ is holomorphic symplectic with respect to $I_{\sigma}$. 
\end{prop}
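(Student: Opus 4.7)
My plan is to separate the pointwise linear algebra from the global integrability argument, and to conclude integrability via Newlander--Nirenberg. Fix a point $p\in M$. The vanishing $\sigma^{n+1}=0$ says the skew-symmetric form $\sigma_p$ on the complex $4n$-dimensional space $T_pM\o\C$ has rank at most $2n$, so $\dim_\C\ker\sigma_p\geq 2n$, and the same holds for $\bar\sigma_p$. To establish the decomposition $TM\o\C=T^{0,1}M\oplus T^{1,0}M$, I will show $\ker\sigma_p\cap\ker\bar\sigma_p=0$; since $T^{1,0}M=\overline{T^{0,1}M}$, a dimension count then forces $\dim_\C\ker\sigma_p=2n$ exactly and yields the splitting. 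If $v$ lies in both kernels, then $\iota_v(\sigma_p^n\wedge\bar\sigma_p^n)=0$. Because $\sigma^n\wedge\bar\sigma^n$ is a nonzero element of the one-dimensional $\C$-space $\Lambda^{4n}(T_pM\o\C)^*$, contraction defines a $\C$-linear isomorphism $T_pM\o\C\xrightarrow{\sim}\Lambda^{4n-1}(T_pM\o\C)^*$ (pick any real basis of $T_pM$ and expand), so $v=0$. The operator $I_\sigma$ is then a well-defined real endomorphism of $TM$ squaring to $-\mathrm{id}$.

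Involutivity of $T^{0,1}M$ drops out of $d\sigma=0$. For local smooth sections $X, Y$ of $T^{0,1}M=\ker\sigma$ and an arbitrary vector field $Z$, Cartan's formula gives
\[
d\sigma(X,Y,Z)=X\sigma(Y,Z)-Y\sigma(X,Z)+Z\sigma(X,Y)-\sigma([X,Y],Z)+\sigma([X,Z],Y)-\sigma([Y,Z],X),
\]
and every term on the right except $-\sigma([X,Y],Z)$ vanishes by $\sigma(X,\hdot)=\sigma(Y,\hdot)=0$. Closedness then yields $\sigma([X,Y],Z)=0$ for all $Z$, i.e.\ $[X,Y]\in\ker\sigma=T^{0,1}M$. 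Newlander--Nirenberg promotes this to the integrability of $I_\sigma$.

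Finally, that $\sigma$ is holomorphic symplectic with respect to $I_\sigma$ is essentially built into the construction. By definition $\sigma$ annihilates $T^{0,1}M$, so it has no $(1,1)$ or $(0,2)$ component and is of pure type $(2,0)$ in the $I_\sigma$-decomposition; combined with $d\sigma=0$ this gives $\dibar\sigma=0$, so $\sigma\in H^0(M,\Omega^2_{(M,I_\sigma)})$. Non-degeneracy follows because $\sigma^n\wedge\bar\sigma^n\neq 0$ forces $\sigma^n\neq 0$. The step I expect to require the most care is the pointwise identity $\ker\sigma\cap\ker\bar\sigma=0$, since this is the only place where the positivity-type axiom $\sigma^n\wedge\bar\sigma^n\neq 0$ is genuinely used — everything else is Cartan calculus plus Newlander--Nirenberg once the splitting is in place.
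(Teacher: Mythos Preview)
The paper does not give its own proof of this proposition; it is quoted from \cite[Prop.~3.1]{Ver15} without argument. Your proof is correct and is essentially the standard one. The pointwise linear algebra is handled properly: $\sigma^{n+1}=0$ bounds the rank of $\sigma_p$ by $2n$, and the key step $\ker\sigma_p\cap\ker\bar\sigma_p=0$ follows cleanly from contraction with the nowhere-vanishing top form $\sigma^n\wedge\bar\sigma^n$, after noting $\overline{\ker\sigma}=\ker\bar\sigma$. The Cartan-formula computation for involutivity of $\ker\sigma$ is the standard closed-form argument, and the appeal to Newlander--Nirenberg is exactly what is needed. The final paragraph is also fine: $\sigma$ annihilating $T^{0,1}M$ forces pure type $(2,0)$, and $\sigma^n\wedge\bar\sigma^n\neq 0$ gives $\sigma^n\neq 0$ pointwise, hence non-degeneracy on $T^{1,0}M$. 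There is nothing to compare against in the present paper, but your write-up would serve as a self-contained proof.
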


\begin{lemma}[{\cite[Thm. 1.10]{verbitsky2015degenerate}}]
Let $M$ be a complex manifold (not necessarily compact) and $\sigma$ a holomorphic symplectic $2$-form on $M$. Consider a proper Lagrangian fibration $\pi \colon M \to S$ over a complex manifold $S$. Fix a closed $2$-form $\alpha$ on $S$ of Hodge type $(2,0)+(1,1)$. Then the form $\sigma_\alpha:= \sigma +\pi^*\alpha$ is a c-symplectic structure. Moreover, the projection $\pi \colon M \to S$ is holomorphic with respect to the complex structure $I_{\alpha}:= I_{\sigma_{\alpha}}$. It is a Lagrangian fibration with respect to the holomorphic symplectic form $\sigma_{\alpha}$. 
\end{lemma}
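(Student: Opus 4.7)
The plan is to verify the three claims asserted in the lemma separately: that $\sigma_\alpha$ is a c-symplectic structure, that $\pi$ is holomorphic with respect to $I_\alpha$, and that $\pi$ is still a Lagrangian fibration with symplectic form $\sigma_\alpha$. Throughout the argument I would work pointwise, exploiting that the set $M^\circ$ where $\pi$ is submersive is open and dense in $M$: since all three conclusions are either pointwise vanishing or pointwise non-vanishing of smooth forms, it suffices to verify them on $M^\circ$ and extend by continuity.

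For the c-symplectic conditions, closedness of $\sigma_\alpha$ is immediate from $d\sigma=0$ and $d\pi^*\alpha=\pi^*d\alpha=0$. To handle $\sigma_\alpha^{n+1}=0$ and $\sigma_\alpha^n\wedge\bar\sigma_\alpha^n\neq 0$, fix a point $p\in M^\circ$ and choose inside $T_p^{1,0}M$ (with respect to the original complex structure $I_0$) a Lagrangian complement $H$ to the vertical subspace $V_p^{1,0}$, together with bases $e_1,\dots,e_n$ of $V_p^{1,0}$ and $f_1,\dots,f_n$ of $H$ that are $\sigma$-dual, so that $\sigma=\sum f^i\wedge e^i$. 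Because $\pi^*\alpha$ factors through $d\pi$ and $H$ projects isomorphically onto $T_bS$, the component $\pi^*\alpha^{2,0}$ is a combination of the $f^i\wedge f^j$ and $\pi^*\alpha^{1,1}$ is a combination of the $f^i\wedge\bar f^j$; crucially, the assumption that $\alpha$ has no $(0,2)$-part ensures that no $\bar e^i$ factor ever appears in $\sigma_\alpha$. A bookkeeping of the possible numbers of $e^i, f^i, \bar f^i, \bar e^i$ factors in each term of $\sigma_\alpha^{n+1}$ exceeds the available multiplicities once the type constraint is imposed, while the analogous count for $\sigma_\alpha^n\wedge\bar\sigma_\alpha^n$ forces the only surviving contribution to be $\sigma^n\wedge\bar\sigma^n$, which is non-vanishing because $\sigma$ is holomorphic symplectic.

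For holomorphy of $\pi$ with respect to $I_\alpha$, by definition $T^{0,1}_{I_\alpha}M=\ker\sigma_\alpha\subset T_pM\otimes_\R\C$, so it suffices to show $d\pi(v)\in T^{0,1}_S$ for every $v\in\ker\sigma_\alpha$. For any vertical $w$, $\pi^*\alpha(v,w)=\alpha(d\pi v,0)=0$, so $\sigma(v,w)=0$. Since $\sigma$ is of type $(2,0)$ for $I_0$, this reads $\sigma(v^{1,0},u)=0$ for every $u\in V_p^{1,0}$, and maximal isotropy of $V_p^{1,0}$ forces $v^{1,0}\in V_p^{1,0}$, i.e.\ $d\pi(v^{1,0})=0$. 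Hence $d\pi(v)=d\pi(v^{0,1})$, and since $\pi$ is $I_0$-holomorphic this lies in $T^{0,1}_S$ as required.

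The final claim then assembles easily: the preceding proposition of Verbitsky gives that $\sigma_\alpha$ is holomorphic symplectic for $I_\alpha$; Step 2 gives holomorphy of $\pi$; the fibers of $\pi\colon(M,I_\alpha)\to S$ coincide set-theoretically with the original fibers (hence have the correct complex dimension $n$); and $\sigma_\alpha|_{F_b}=\sigma|_{F_b}+(\pi^*\alpha)|_{F_b}=0$ by the original Lagrangian condition and the vanishing of any pullback on vertical tangents. The main obstacle I anticipate is the combinatorial verification of the c-symplectic conditions in Step 1: the entire point of the $(2,0)+(1,1)$ hypothesis on $\alpha$ is to exclude $\bar e^i$ factors from $\pi^*\alpha$, and without this exclusion $\sigma_\alpha^{n+1}$ would no longer vanish. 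Care is needed to ensure that the index bookkeeping really rules out every cross term.
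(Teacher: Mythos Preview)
The paper does not supply its own proof of this lemma; it is quoted without proof from \cite[Thm.~1.10]{Ver15}. Your outline is the natural direct verification and is essentially correct, but two points of phrasing should be tightened.

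First, pointwise non-vanishing does not in general extend by continuity from a dense open set, so the blanket justification in your opening paragraph is misleading. What actually saves you is that your bookkeeping proves the \emph{identity} $\sigma_\alpha^n\wedge\bar\sigma_\alpha^n=\sigma^n\wedge\bar\sigma^n$ on $M^\circ$; this equality of smooth top-forms extends by continuity to all of $M$, and the right-hand side is nowhere zero since $\sigma$ is globally holomorphic symplectic. (The same device works cleanly for the holomorphy of $\pi$, since $d\pi\circ I_\alpha=I_S\circ d\pi$ is an equality of smooth bundle maps.) Second, the role of the no-$(0,2)$ hypothesis on $\alpha$ is not to exclude $\bar e^i$ factors from $\sigma_\alpha$: those are already absent, because $\pi^*\alpha$ annihilates all vertical vectors and $\sigma$ is of type $(2,0)$. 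What the hypothesis actually excludes is $\bar f^i\wedge\bar f^j$ terms in $\pi^*\alpha$. That is the constraint driving the combinatorics: it forces every summand of $\sigma_\alpha$ to contain at least one $f^i$, so $\sigma_\alpha^{n+1}$ would need $n+1$ factors from the $n$-dimensional space $H^*$ and must vanish, and the same count shows that the only surviving term in $\sigma_\alpha^n\wedge\bar\sigma_\alpha^n$ is $\sigma^n\wedge\bar\sigma^n$.
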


Suppose that the base $S$ of a Lagrangian fibration satisfies the condition $H^1(S,\mathcal O_S) = 0$. The next two statements will show that the isomorphism class of the complex manifold ($M$, $I_{\alpha}$) depends only on the cohomology class of $\alpha$.

\begin{lemma}[{\cite[Thm. 2.7]{soldatenkov2021moser}}]
\label{holomorphic moser}
Let $(M, \sigma)$ be a holomorphic symplectic manifold and $\pi \colon M \to S$ a proper Lagrangian fibration. Assume that $H^1(S,\mathcal O_S) = 0$. Let $\alpha$ be an exact $2$-form of type $(2,0)+(1,1)$ on $S$. Consider a family of c-symplectic structures
\[
\sigma_t := \sigma+t\pi^*\alpha.
\]
Then there exists a flow of diffeomorphisms $\phi_t$ on $M$ preserving the fibers of $\pi$ such that for each $t$ 
\[
\phi_t \colon (M, I_0) \to (M, I_t)
\]
is a biholomorphism.
\end{lemma}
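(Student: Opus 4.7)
My plan is a Moser-type argument adapted to the holomorphic symplectic setting, in which a single time-independent vector field $v$ tangent to the fibers of $\pi$ will generate the desired flow.

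First, I would use $H^1(S, \O_S) = 0$ together with the Hodge-type condition on $\alpha$ to produce a smooth primitive $\beta$ of $\alpha$ of pure type $(1,0)$. Starting from any smooth $1$-form $\gamma$ with $d\gamma = \alpha$ and decomposing $\gamma = \gamma^{1,0} + \gamma^{0,1}$, the hypothesis that $\alpha$ has no $(0,2)$-component forces $\dibar \gamma^{0,1} = 0$; the Dolbeault isomorphism $H^{0,1}_{\dibar}(S) \cong H^1(S, \O_S)$ then yields a smooth function $f$ with $\gamma^{0,1} = \dibar f$, so that $\beta := \gamma - df$ is of type $(1,0)$ and still satisfies $d\beta = \alpha$.

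Next, I would construct $v$. Since $\sigma$ is nowhere-degenerate on $M$, the contraction $T^{1,0}M \to \Lambda^{1,0}T^*M$, $\xi \mapsto \iota_\xi \sigma$, is a smooth vector bundle isomorphism, so there is a unique smooth $(1,0)$-field $v^{1,0}$ satisfying $\iota_{v^{1,0}} \sigma = -\pi^*\beta$; put $v := v^{1,0} + \overline{v^{1,0}}$. I would then verify that $v$ is tangent to the fibers of $\pi$: over a smooth point of $\pi$ and for any vertical vector $w$, $\sigma(v^{1,0}, w) = -\pi^*\beta(w) = 0$, so $v^{1,0}$ lies in the $\sigma$-orthogonal of the vertical subspace, which by the Lagrangian property coincides with the vertical subspace itself. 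Verticality then extends to the whole of $M$ by continuity.

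Finally, I would integrate $v$ and check the Moser identity. Because $v$ is tangent to the compact fibers of the proper map $\pi$, its flow $\phi_t$ is defined for all $t \in \R$ and preserves $\pi$. Since $\iota_v \pi^*\alpha = 0$ by verticality, Cartan's formula combined with $d\sigma = 0$ gives $\mathcal{L}_v \sigma_t = d(\iota_v \sigma) = -\pi^*\alpha = -\tfrac{d}{dt}\sigma_t$, so $\tfrac{d}{dt}(\phi_t^*\sigma_t) = 0$ and hence $\phi_t^*\sigma_t = \sigma_0$. Because each c-symplectic form determines its complex structure via $T^{0,1}_{I_t}M = \ker \sigma_t$, this identity of forms is equivalent to $\phi_t^* I_t = I_0$, so $\phi_t \colon (M, I_0) \to (M, I_t)$ is a biholomorphism. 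The step I expect to require the most care is the verticality of $v$ at singular fibers of $\pi$, since the Lagrangian hypothesis only applies over $B^\circ$; one must observe that $v$ is globally smooth thanks to the unconditional nondegeneracy of $\sigma$ and that $\{d\pi(v)=0\}$ is a closed subset containing the dense smooth locus. Everything else—the Hodge-type adjustment of $\beta$ and the Cartan calculation—is routine once the cohomological vanishing is invoked.
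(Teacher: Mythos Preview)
The paper does not supply its own proof of this lemma; it is quoted verbatim from \cite[Thm.~2.7]{SV}. Your Moser-type argument is correct and is the standard one: use $H^1(S,\mathcal O_S)=0$ to adjust a primitive of $\alpha$ to pure type $(1,0)$, invert $\sigma$ to get a vertical field, and integrate. In fact the paper carries out precisely the same construction locally in Step~2 of the proof of Theorem~\ref{deg.tw=ShaT}, where it writes $\iota_{w_i}\sigma = t\pi^*a_i$ for a smooth $(1,0)$-primitive $a_i$ of $\alpha$ and computes $\mathsf L_{w_i}\sigma = t\pi^*\alpha$; your argument is the global version of that computation. Your handling of verticality at critical points of $\pi$ (global smoothness of $v$ from nondegeneracy of $\sigma$, plus closedness of $\{d\pi(v)=0\}$) is exactly right and is the only point requiring care.
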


\begin{cor}
Let $\pi\colon M\to S$ be as in Lemma \ref{holomorphic moser}. Consider two cohomologous $2$-forms $\alpha$ and $\alpha'$ of Hodge type $(2,0) + (1,1)$ on $S$. Let $I$ and $I'$ be the complex structures on $M$ induced by the c-symplectic forms $\sigma + \pi^*\alpha$ and $\sigma + \pi^*\alpha'$ respectively. Then there exists a biholomorphism $\phi\colon (M,I) \to (M,I')$ preserving the fibers of $\pi$.
\end{cor}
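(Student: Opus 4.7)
The plan is to reduce the corollary to a direct application of Lemma \ref{holomorphic moser}. Since $\alpha$ and $\alpha'$ are both closed of Hodge type $(2,0)+(1,1)$ and represent the same cohomology class, their difference $\beta := \alpha' - \alpha$ is an exact $2$-form of Hodge type $(2,0)+(1,1)$ on $S$. So $\beta$ is exactly the sort of form to which Lemma \ref{holomorphic moser} applies.

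The key move is to change the base point: instead of viewing $M$ with its original complex structure, view $(M, I)$ as a holomorphic symplectic manifold with holomorphic symplectic form $\sigma + \pi^*\alpha$, and take $\pi \colon (M, I) \to S$ as a proper Lagrangian fibration. The hypothesis $H^1(S, \mathcal O_S) = 0$ is preserved since it only concerns the base. Now apply Lemma \ref{holomorphic moser} to the c-symplectic family
\[
\sigma_t := (\sigma + \pi^*\alpha) + t\pi^*\beta, \qquad t \in [0,1],
\]
which produces a flow of fiber-preserving diffeomorphisms $\phi_t$ such that each $\phi_t \colon (M, I_0) \to (M, I_t)$ is a biholomorphism, where $I_t$ is the complex structure determined by $\sigma_t$.

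At $t=0$ the form is $\sigma + \pi^*\alpha$, so $I_0 = I$; at $t=1$ the form is $\sigma + \pi^*\alpha'$, so $I_1 = I'$. Setting $\phi := \phi_1$ gives a fiber-preserving biholomorphism $(M, I) \to (M, I')$, which is the required map. The only thing worth checking is that Lemma \ref{holomorphic moser} genuinely applies in this rephrased setting, i.e.\ that $\sigma + \pi^*\alpha$ is holomorphic symplectic for $I$ and that $\pi$ remains holomorphic and Lagrangian for this complex structure; but these are precisely the properties guaranteed by the preceding lemma that introduced $I_\alpha$. I do not anticipate a genuine obstacle: the content of the corollary is simply that the cohomology class of $\alpha$, not its representative, determines $I_\alpha$ up to fiber-preserving biholomorphism, and this is the standard Moser-type principle now available in the holomorphic Lagrangian setting.
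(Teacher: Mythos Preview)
Your argument is correct and matches the paper's own proof: both rebase to $(M,I)$ with the holomorphic symplectic form $\sigma+\pi^*\alpha$, observe that $\sigma+\pi^*\alpha'$ differs from it by the pullback of an exact form of type $(2,0)+(1,1)$, and then invoke Lemma \ref{holomorphic moser}. The only cosmetic difference is that the paper writes the exact difference as $d\beta$ for a $1$-form $\beta$, whereas you call the exact $2$-form itself $\beta$.
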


\begin{proof}
Follows by applying Lemma \ref{holomorphic moser} to the holomorphic symplectic manifold $(M,I)$ equipped with the holomorphic symplectic form $\sigma_{\alpha}:= \sigma + \pi^*\alpha$. Indeed, in this case the form $\sigma + \pi^*\alpha'$ can be written as $\sigma_{\alpha} + \pi^*d\beta$ for some $1$-form $\beta$.
\end{proof}

\begin{df}[{\cite[Def. 3.17]{verbitsky2015degenerate}}]\label{definition of dg.tw}
Let $\pi \colon X \to B$ be a Lagrangian fibration and $\alpha$ a closed $2$-form on $B$ of type $(2,0) + (1,1)$. Consider the family of c-symplectic structures $\sigma_t:= \sigma+t\pi^*\alpha$, $t\in\C$. Let $I_t$ be the associated family of complex structures on $X$. The \textit{degenerate twistor family} $\mathfrak{X}_{deg.tw}$ of $(X, \pi)$ is the following manifold. Its underlying smooth manifold is $X \times \C$ and the almost complex structure is defined as
\[
(I_{tw})_{(x,t)}:= I_t \oplus I_{\C},
\]
where $x\in X$, $t\in \C$, and $I_{\C}$ is the standard complex structure on $\C$.
\end{df}

One can show that the almost complex structure $I_{tw}$ from Definition \ref{definition of dg.tw} is integrable \cite[Thm. 3.18]{verbitsky2015degenerate}. In other words, $\mathfrak{X}_{deg.tw}$ is a complex manifold. The projection $\mathfrak{X}_{deg.tw} \to \C$ is holomorphic and  $\mathfrak{X}_{deg.tw}$ is the total space of a holomorphic family of complex holomorphically symplectic manifolds. It is endowed with a holomorphic fibration $\Pi_{deg.tw} \colon \mathfrak{X}_{deg.tw} \to B \times \C$ that restricts to a fiber $X_t \subset \mathfrak{X}_{deg.tw}$ as $\pi$.


\section{Shafarevich--Tate group of a Lagrangian fibration}\label{ShT-properties}
\subsection{Structure of Shafarevich--Tate groups: first steps}\label{ShT-def}

Let $X$ be a compact hyperk\"ahler manifold and $\pi \colon X \to B$ a Lagrangian fibration, $B\simeq  \mathbb P^n$. Recall that the Shafarevich--Tate group $\Sha$ of $\pi$ is defined to be $H^1(B, Aut^0_{X/B})$ (Definition \ref{definition of Sha}). 

Consider the exponential map $\pi_*T_{X/B}\to Aut^0_{X/B}$. Define the sheaf $\Gamma$ by the short exact sequence
$$
    0 \to \Gamma \to \pi_*T_{X/B} \to Aut^0_{X/B}\to 0.
$$
It induces the following long exact sequence:
\begin{equation}
\label{exact seq of shas}
    H^1(B,\Gamma) \to H^1(B,\pi_*T_{X/B}) \to \Sha \to H^2(B,\Gamma) \to 0.
\end{equation}
Indeed, the sheaf $\pi_*T_{X/B}$ is isomorphic to $\Omega^1_B$ by Lemma \ref{tangent cotangent isomorphism}, hence, $$H^2(B,\pi_*T_{X/B}) = H^{1,2}(\mathbb P^n) = 0.$$

Write
\[
\tSha= \tSha(X, \pi):=H^1(B, \pi_*T_{X/B}).
\]
Of course, the group $\tSha(X, \pi)$ is (non-canonically) isomorphic to  $\mathbb C$ (Corollary \ref{isomorphism of bases}). However, we will use this notation when we want to emphasize its relation to $\Sha$.

\begin{lemma}\label{about Gamma}
The sheaf $\Gamma:=\ker (\pi_*T_{X/B}\to Aut^0_{X/B})$ is a sheaf of finitely generated torsion-free abelian groups.
\end{lemma}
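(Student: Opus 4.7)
I would separate the two assertions.

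Torsion-freeness is essentially formal. The sheaf $\pi_*T_{X/B}$ is a coherent $\O_B$-module (as the proper direct image of a coherent sheaf), and in particular a sheaf of $\C$-vector spaces; any subsheaf of abelian groups, $\Gamma$ included, is therefore automatically $\Z$-torsion-free.

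For finite generation of stalks, I would first identify $\Gamma|_{B^\circ}$ with a local system of free abelian groups of rank $2n$. Over the smooth locus $B^\circ$ the fibers of $\pi$ are complex tori $F_b = T_0F_b/\Lambda_b$; since holomorphic vector fields on a complex torus are translation invariant, $\pi_*T_{X/B}|_{B^\circ}$ is the rank-$n$ holomorphic vector bundle $b \mapsto T_0F_b$, and the relative exponential restricts on each fiber to the usual torus exponential $T_0F_b \to F_b$. Its kernel is the period lattice $\Lambda_b \cong \Z^{2n}$, so $\Gamma|_{B^\circ}$ is the local system of period lattices.

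For an arbitrary $b\in B$, I would choose a small connected open ball $U \ni b$. Because $D := B \setminus B^\circ$ is a proper analytic subset (and is in fact a divisor), $U^\circ := U \cap B^\circ$ is both connected and dense in $U$. Fix $b' \in U^\circ$. I would then argue that the composite
\[
\Gamma(U) \hookrightarrow \Gamma(U^\circ) = H^0\bigl(U^\circ,\,\Gamma|_{B^\circ}\bigr) \hookrightarrow \Lambda_{b'} \cong \Z^{2n}
\]
consists of injections: the first because $\pi_*T_{X/B}$ is torsion-free (a relative vector field on $\pi^{-1}U$ vanishing on the dense open $\pi^{-1}U^\circ$ must vanish, since $T_{X/B} \subset T_X$ is torsion-free), and the second by identifying $H^0$ of a local system on a connected base with its subgroup of monodromy invariants. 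These embeddings are compatible under further restriction, so passing to the direct limit over shrinking $U$ embeds the stalk $\Gamma_b$ into $\Z^{2n}$. Since subgroups of $\Z^{2n}$ are finitely generated, so is $\Gamma_b$.

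The place where the most care is needed, and which I would regard as the main (mild) obstacle, is making precise the identification of $\Gamma|_{B^\circ}$ with the period-lattice local system for a family of tori that need not admit a global zero section. This boils down to choosing local trivializations of the relative Lie algebra bundle and checking that the fiberwise exponential glues to a well-defined sheaf map; it should be routine but merits explicit treatment.
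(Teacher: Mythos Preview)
Your proposal is correct and follows essentially the same route as the paper: identify $\Gamma|_{B^\circ}$ with the local system of period lattices of the torus fibers, and use that the restriction $\Gamma(U)\to\Gamma(U\cap B^\circ)$ is injective (a relative vector field vanishing on the dense open $\pi^{-1}(U\cap B^\circ)$ vanishes) to embed $\Gamma(U)$ into a finite-rank lattice. Two minor remarks: your rank $2n$ is the correct one (the paper writes $n$, which is a slip, as confirmed later by $\Gamma_b=H_1(F_b,\Z)$ in the proof of Proposition~\ref{R^1pi_*Z maps to Gamma}); and the ``obstacle'' you flag is not really one, since the fiberwise exponential of a holomorphic vertical vector field is a vertical automorphism regardless of whether a zero section exists.
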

\begin{proof}
The restriction of the sheaf $\Gamma$ to $B^\circ$ is a local system of torsion-free abelian groups of rank $n = \frac{1}{2}\dim X$. For every open $U\subset B$ the restriction map
\begin{equation}
\label{frf}
H^0(U,\Gamma)\to H^0(U\cap B^\circ,\Gamma)
\end{equation}
is injective. Indeed, an element of $H^0(U, \Gamma)$ is a vector field on $\pi^{-1}(U)$. If a vector field vanishes on an open subset of $\pi^{-1}(U)$, then it vanishes on $\pi^{-1}(U)$. The group on the right-hand side of (\ref{frf}) is torsion-free of finite rank, hence so is the group on the left-hand side.
\end{proof}

Lemma \ref{about Gamma} implies that the cohomology groups of $\Gamma$ are finitely generated abelian groups. Let $\Sha^0$ denote the image of the map $\tSha = H^1(B,\pi_*T_{X/B})\to \Sha$ from the long exact sequence (\ref{exact seq of shas}). The group $\Sha$ fits into the short exact sequence
\begin{equation}
\label{sha zero to sha}
    0 \to \Sha^0 \to \Sha \to H^2(B,\Gamma) \to 0.
\end{equation}

Being a quotient of $\tSha = \C$ by a subgroup, the group $\Sha^0$ inherits a structure of a connected topological group. Endow $\Sha$ with the translation invariant topology such that $\Sha^0$ is the connected component of unity of $\Sha$. We view $H^2(B,\Gamma)$ as a discrete topological group. Both maps in the exact sequence (\ref{sha zero to sha}) are continuous maps of topological groups.

The following provides useful intuition for Shafarevich--Tate groups.

\begin{rmk} Let $B$ be a complex manifold, $Y: = B \times \C^{\times}$ and $Aut^{0}_{Y/B} \subset Aut_{Y/B}$ the subsheaf  consisting of the automorphisms that lie in the connected component of unity. Then $Aut^0_{Y/B} = \O^{\times}_B$ and $H^1(B, Aut^0_{Y/B}) = \operatorname{Pic}(B)$. Total spaces of twists by classes in $H^1(B, Aut^{0}_{X/B})$ are holomorphic principal $\C^{\times}$-bundles over $X$. They are in one-to-one correspondence with holomorphic line bundles.

Thus, the Shafarevich--Tate group $\Sha$ serves as an analog of the Picard group, its subgroup $\Sha^0$ is an  analog of $\operatorname{Pic}^0(B)$, and $\Sha/\Sha^0$ of the N\'eron--Severi group $NS(B)$.

\end{rmk}


\subsection{Shafarevich--Tate family}\label{ShT-family}

Consider an element $s\in \tSha$. The twist $X^s$ of $X$ by $s$ is defined to be the manifold $X^{[s]}$  where $[s]$ is the image of $s$ under the map $\tSha\to \Sha$ (Subsection \ref{twists}). The twist $X^{s}$ is a complex manifold endowed with a holomorphic map $\pi^{s}:= \pi^{[s]}$ to $B$.

\begin{prop}
\label{prop_ShT_family}
There exists a holomorphic family of complex manifolds 
$$\mathfrak{X}_{\Sha \mathrm{T}} \to \tSha \cong \C$$
such that the fiber over $s \in \tSha$ is $X^s$. This family is endowed with a  holomorphic fibration 
$$
\Pi_{\Sha \mathrm{ T}} \colon \mathfrak{X}_{\Sha \mathrm{T}} \to B \times \tSha.
$$
and the map $\Pi_{\Sha \mathrm{T}}$ restricts to $\pi^s$ on each fiber $X^s$.
 \end{prop}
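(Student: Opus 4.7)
The plan is to build $\mathfrak{X}_{\Sha\mathrm{T}}$ directly as a Čech-type gluing in which the parameter $s \in \C$ scales a fixed cocycle. Choose an affine cover $B = \bigcup_i U_i$ acyclic for $\pi_*T_{X/B}$, and represent a generator of $\tSha \cong \C$ by a Čech cocycle $\{v_{ij}\}$ with $v_{ij} \in (\pi_*T_{X/B})(U_{ij})$. Each $v_{ij}$ is a vertical holomorphic vector field on $\pi^{-1}(U_{ij})$; since $\pi$ is proper and $v_{ij}$ is tangent to the fibers, $v_{ij}$ is complete and integrates to a one-parameter family of vertical automorphisms $\Phi^{v_{ij}}_s := \exp(s v_{ij}) \in Aut^0_{X/B}(U_{ij})$ depending holomorphically on $s \in \C$.

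I would then define $\mathfrak{X}_{\Sha\mathrm{T}}$ as the quotient of $\bigsqcup_i \pi^{-1}(U_i) \times \C$ by the identifications $(x, s) \sim (\Phi^{v_{ij}}_s(x), s)$ on $\pi^{-1}(U_{ij}) \times \C$. The cocycle condition on triple overlaps reduces to $\Phi^{v_{ij}}_s \circ \Phi^{v_{jk}}_s = \Phi^{v_{ik}}_s$, which follows from the additive relation $v_{ij} + v_{jk} = v_{ik}$ together with the fact that $Aut^0_{X/B}$ is a sheaf of abelian groups (Lemma \ref{commutativity}): this makes $\exp$ a homomorphism from the additive sheaf $\pi_*T_{X/B}$ to $Aut^0_{X/B}$. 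Standard Čech gluing arguments then endow $\mathfrak{X}_{\Sha\mathrm{T}}$ with the structure of a Hausdorff complex manifold, and the assignment $(x, s) \mapsto s$ descends to a holomorphic map $\mathfrak{X}_{\Sha\mathrm{T}} \to \C \cong \tSha$.

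For the remaining structure, note that each $\Phi^{v_{ij}}_s$ commutes with $\pi$, so the map $\pi^{-1}(U_i) \times \C \to B \times \C$, $(x,s) \mapsto (\pi(x), s)$, is compatible with the gluing and yields a holomorphic fibration $\Pi_{\Sha\mathrm{T}} \colon \mathfrak{X}_{\Sha\mathrm{T}} \to B \times \C$. The fiber over $s_0 \in \tSha$ is obtained by gluing the $\pi^{-1}(U_i)$ via the cocycle $\{\exp(s_0 v_{ij})\}$ in $Aut^0_{X/B}$; under the identification $\tSha \cong \C$ sending the chosen generator to $1$, this is precisely the twist $X^{s_0}$ (Subsection \ref{twists}), and the restriction of $\Pi_{\Sha\mathrm{T}}$ to it is $\pi^{s_0}$ by construction.

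There is no serious conceptual obstruction here; the construction is essentially formal once the existence of the cocycle and completeness of the $v_{ij}$ are in place. The only mildly delicate point is the holomorphic dependence of $\Phi^{v_{ij}}_s$ on $s$, which is a routine consequence of the theory of flows of holomorphic vector fields applied fiberwise to the proper map $\pi$, together with the identification of the sections of $\pi_*T_{X/B}$ with vertical holomorphic vector fields.
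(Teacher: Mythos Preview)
Your proposal is correct and follows essentially the same construction as the paper: both represent a generator of $\tSha$ by a \v{C}ech cocycle $\{v_{ij}\}$ in $\pi_*T_{X/B}$, scale it by the coordinate $t$ on $\tSha\cong\C$, exponentiate to obtain gluing automorphisms $\exp(tv_{ij})$, and realize $\mathfrak{X}_{\Sha\mathrm{T}}$ as the resulting twist of $X\times\C$ over $B\times\C$. Your write-up is slightly more explicit about why the cocycle condition holds (via Lemma~\ref{commutativity}) and about completeness and holomorphic dependence of the flows, but the argument is the same.
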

\begin{proof}
Set $\widetilde{B}:=B \times \tSha$ and $\widetilde{X} := X \times \tSha$. Let $\widetilde{\pi} := \pi \times id \colon \widetilde{X} \to \widetilde{B}$ be the projection. For any open covering $B = \bigcup_i U_i$  define $\widetilde{U_i}:= U_i \times \tSha$. Of course, $\widetilde B = \bigcup_i \widetilde{U_i}$ is an open covering of $\widetilde{B}$.

Let $v \in \tSha$ be a non-zero class. Represent it by a \v{C}ech cocycle $v_{ij}$ on some open covering $U_{ij}$. Let $\widetilde{v}_{ij}$ be the pullback of $v_{ij}$ to $\widetilde{U}_{ij}$. Define a $1$-cocycle on $\widetilde B$ with coefficients in $\widetilde{\pi}_*T_{\widetilde X/\widetilde B}$ as
\[
w_{ij} := t\widetilde{v_{ij}} \in \widetilde\pi_*T_{\widetilde{X}/\widetilde B}(\widetilde{U_{ij}})
\]
where $t$ is the coordinate on $\tSha \cong \C$. The twist of $\widetilde{\pi}\colon\widetilde{X} \to \widetilde{B}$ along the cocycle $\{\phi_{ij}\}:= \{\exp(w_{ij})\}$ is the desired family $\mathfrak{X}_{\Sha \mathrm{T}}$. In other words, the manifold $\mathfrak X_{\Sha\mathrm T}$ is obtained as
$$
\mathfrak{X}_{\Sha\mathrm{T}} = \bigsqcup \left(\pi^{-1}(U_i)\times\C\right) \Bigg/ \left(\pi^{-1}(U_i)\times\C\right) \ni x \sim 
\phi_{ij}(x)\in \left(\pi^{-1}(U_j)\times\C\right).
$$
\end{proof}

\begin{df} 
The family $\mathfrak{X}_{\Sha \mathrm{T}} \to \tSha$ constructed in Proposition \ref{prop_ShT_family} will be referred to as the \textit{Shafarevich--Tate family}.
\end{df}


\subsection{Shafarevich--Tate twists are symplectic}\label{symplectic}
 
A twist of a Lagrangian fibration by an element of $\Sha$ is a priori only a complex manifold. We will see in this Subsection that it is a holomorphic symplectic manifold.\footnote{A twist of a Lagrangian fibration might not be K\"ahler. That is why we use the term holomorphic symplectic instead of hyperk\"ahler.}

\begin{lemma}
\label{autos and symplectic forms}
Let $(M,\sigma)$ be a (not necessarily compact) holomorphic symplectic manifold and $\pi\colon M\to S$ a proper Lagrangian fibration on $M$ over a smooth base $S$. Consider an automorphism $\phi\in H^0(S, Aut^0_{M/S})$. Then
$$
\phi^*\sigma - \sigma = \pi^*\alpha
$$
where $\alpha$ is a closed holomorphic $2$-form on $S$.
\end{lemma}

\begin{proof}
The statement is local on $S$ so we can always shrink $S$ if necessary. For $S$ small enough, one can realize $\phi$ as $\exp(v)$ for a vertical holomorphic vector field $v$. There exists a holomorphic $1$-form $\beta$ on $S$ such that $\iota_v\sigma = \pi^*\beta$ (Lemma \ref{tangent cotangent isomorphism}). Let $\phi_t$ denote the flow of $v$ and $\mathsf L_v$ the Lie derivative along the vector field $v$. Then
$$
\phi^*\sigma - \sigma = \int\limits_0^1\frac{d(\phi_t^*\sigma)}{dt}dt = \int\limits_0^1 \phi_t^*\mathsf L_v\sigma dt = d\int_0^1\phi_t^*(\iota_v\sigma) dt = d\int\limits_0^1 \phi_t^*\pi^*\beta dt = \pi^*d\beta.
$$
The last identity holds because $\pi\circ \phi_t = \pi$. Thus $\phi^*\sigma - \sigma = \pi^*\alpha$ where $\alpha = d\beta$.
\end{proof}

\begin{thrm}
\label{Sha from symplectic automorphisms}
Let $\pi\colon X\to B$ be a Lagrangian fibration on a compact holomorphic symplectic manifold $X$. Denote by $Aut^{0,\sigma}_{X/B}$ the subsheaf of $Aut^0_{X/B}$ consisting of $\sigma$-symplectic automorphisms. Then the inclusion $Aut^{0,\sigma}_{X/B} \to Aut^0_{X/B}$ induces an isomorphism of cohomology groups $H^i(B, Aut^{0,\sigma}_{X/B}) \to H^i(B, Aut^0_{X/B})$ for $i=0,1$.
\end{thrm}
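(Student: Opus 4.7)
The plan is to realise the inclusion as the inclusion of the kernel in a short exact sequence
$$
0 \to Aut^{0,\sigma}_{X/B}\to Aut^0_{X/B} \xrightarrow{\delta} \Omega^{2,\mathrm{cl}}_B \to 0
$$
of abelian sheaves on $B = \mathbb P^n$, where $\delta$ is the morphism supplied by Lemma \ref{autos and symplectic form}: it sends a vertical automorphism $\phi$ to the closed holomorphic $2$-form $\alpha$ on $B$ determined by $\phi^*\sigma - \sigma = \pi^*\alpha$. Once this sequence is in hand, the claimed isomorphisms in degrees $0$ and $1$ will follow from the long exact sequence of cohomology combined with the vanishings $H^i(\mathbb P^n, \Omega^{2,\mathrm{cl}}_{\mathbb P^n}) = 0$ for $i = 0, 1$.

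To establish exactness, I would first use the identification $\Omega^1_B \cong \pi_*T_{X/B}$ provided by Theorem \ref{tangent cotangent isomorphism} to rewrite the exponential sequence as $0 \to \Gamma \to \Omega^1_B \to Aut^0_{X/B} \to 0$. The formula $\exp(v)^*\sigma - \sigma = \pi^*d\beta$ extracted from the proof of Lemma \ref{autos and symplectic form}, together with the injectivity of $\pi^*$ on $2$-forms, forces $\Gamma$ to sit inside the subsheaf $\Omega^{1,\mathrm{cl}}_B$ of closed holomorphic $1$-forms. The de Rham differential therefore descends to a well-defined map $\delta\colon \Omega^1_B/\Gamma = Aut^0_{X/B} \to \Omega^{2,\mathrm{cl}}_B$ whose kernel is $\Omega^{1,\mathrm{cl}}_B/\Gamma$, and the same formula identifies this kernel with $Aut^{0,\sigma}_{X/B}$. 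Additivity of $\delta$ is a direct computation exploiting $\pi\circ\phi = \pi$, and surjectivity is the holomorphic Poincaré lemma: every closed holomorphic $2$-form is locally of the form $d\beta$.

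For the cohomological vanishing I would invoke the standard consequence of Hodge--de Rham degeneration on $\mathbb P^n$: the holomorphic Poincaré lemma makes $\Omega^{p,\mathrm{cl}}_{\mathbb P^n}[-p]$ quasi-isomorphic to the stupid truncation $\Omega^{\geq p}_{\mathbb P^n}$, so $H^j(\mathbb P^n, \Omega^{p,\mathrm{cl}}_{\mathbb P^n}) \cong F^p H^{j+p}(\mathbb P^n, \mathbb C)$. For $j = 0$ this equals $H^{2,0}(\mathbb P^n) = 0$, and for $j = 1$ it equals $F^2 H^3(\mathbb P^n, \mathbb C) = 0$ because $H^3(\mathbb P^n) = 0$. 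The main obstacle I anticipate is the careful verification that $\Gamma$ really lies in $\Omega^{1,\mathrm{cl}}_B$ globally: this step hinges on the isomorphism of Theorem \ref{tangent cotangent isomorphism}, which itself requires the assumption of no multiple fibers in codimension one. As the statement of the theorem does not include this hypothesis explicitly, the proof likely uses it tacitly.
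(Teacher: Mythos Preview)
Your approach is correct and is a clean sheaf-theoretic repackaging of the paper's argument. The paper proceeds by direct \v{C}ech manipulation: given a cocycle $\{\phi_{ij}\}$, it produces closed $2$-forms $\alpha_{ij}$ via Lemma \ref{autos and symplectic form}, uses $H^1(B,\Omega^2_B)=0$ to write $\alpha_{ij}=\beta_j-\beta_i$, observes that the $d\beta_i$ glue to a global holomorphic $3$-form on $\mathbb P^n$ (hence vanish), writes $\beta_i=d\gamma_i$, and then corrects $\phi_{ij}$ by the flows of the vertical fields $v_i$ with $\iota_{v_i}\sigma=\pi^*\gamma_i$. Your short exact sequence encodes exactly this: the chain ``$H^1(\Omega^2)=0$, then $H^0(\Omega^3)=0$, then Poincar\'e lemma'' is precisely the hypercohomology computation $H^1(\mathbb P^n,\Omega^{2,\mathrm{cl}})=F^2H^3(\mathbb P^n,\C)=0$, and the final correction step is the connecting map in your long exact sequence. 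What your version buys is uniformity (both $i=0$ and $i=1$ fall out at once) and transparency about which vanishings are being used; what the paper's version buys is explicitness and independence from the Hodge--de Rham formalism.

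One simplification: you do not need the identification $Aut^0_{X/B}\cong\Omega^1_B/\Gamma$, and hence do not need the isomorphism part of Theorem \ref{tangent cotangent isomorphism}, to define $\delta$. Lemma \ref{autos and symplectic form} already gives $\delta$ intrinsically as $\phi\mapsto\alpha$; well-definedness is uniqueness of $\alpha$ (injectivity of $\pi^*$), the kernel is tautologically $Aut^{0,\sigma}_{X/B}$, additivity is the one-line computation $(\phi\psi)^*\sigma=\psi^*\phi^*\sigma=\sigma+\pi^*(\alpha_\phi+\alpha_\psi)$, and surjectivity only needs Step 1 of Theorem \ref{tangent cotangent isomorphism} (the map $\pi^*\Omega^1_B\to T_{X/B}$, which holds without any hypothesis on fibers) together with the Poincar\'e lemma. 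Your worry about the no-multiple-fibers hypothesis is therefore not intrinsic to your strategy; to the extent it enters, it enters through the paper's own proof of Lemma \ref{autos and symplectic form}, which invokes Theorem \ref{tangent cotangent isomorphism} to write $\iota_v\sigma=\pi^*\beta$ --- so the paper's proof of the theorem carries the same tacit dependence.
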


\begin{proof}
The isomorphism $H^0(B, Aut^{0,\sigma}_{X/B}) \xrightarrow{\sim} H^0(B, Aut^0_{X/B})$ follows from Lemma \ref{autos and symplectic forms} since $B=\mathbb{P}^n$ possesses no holomorphic $2$-forms.

We will  prove that the map $H^1(B,Aut^{0,\sigma}_{X/B}) \to H^1(B, Aut^0_{X/B})$ is surjective. The proof of injectivity of this map follows the same pattern and is left to the reader.

Consider a class $\phi \in H^1(B,Aut^0_{X/B})$. It can be represented by a \v{C}ech $1$-cocycle $\phi_{ij}\in Aut^0_{X/B}(U_{ij})$ for an open covering $B = \bigcup U_i$. Lemma \ref{autos and symplectic forms} implies that $\phi_{ij}^*\sigma - \sigma = \pi^*\alpha_{ij}$ for a closed holomorphic $2$-form $\alpha_{ij}$ on $U_{ij}$. 

The collection of forms $\{\alpha_{ij}\}$ defines a \v{C}ech $1$-cocycle on $B$ with coefficients in $\Omega^2_B$. As $H^1(B,\Omega^2_B)$ vanishes, the cocycle $\{\alpha_{ij}\}$ is exact. Consequently, there exist holomorphic $2$-forms $\beta_i$ on $U_i$ such that $\alpha_{ij} = \beta_j|_{U_{ij}} - \beta_i|_{U_{ij}}$. The form $\alpha_{ij}$ is closed for every $i, j$, hence $d\beta_i|_{U_{ij}} = d\beta_j|_{U_{ij}}$. That means that the forms $d\beta_i$ glue to a globally defined holomorphic $3$-form. There are no non-trivial holomorphic $3$-forms on $B = \mathbb P^n$, hence $d\beta_i = 0$ for every $i$. Therefore, the forms $\beta_i$ are exact. 

We can find a holomorphic $1$-form $\gamma_i$ on $U_i$ such that $\beta_i = d\gamma_i$. Let $v_i$ be the vertical vector field on $\pi^{-1}(U_i)$ such that $\iota_{v_i}\sigma = \pi^*\gamma_i$ and $\psi_i$ be the flow of $v_i$. Define $\widetilde{\phi_{ij}}$ to be the automorphism $$
\widetilde{\phi}_{ij}:= \phi_{ij}\circ\psi_i|_{U_{ij}}\circ\psi_j^{-1}|_{U_{ij}}.
$$ 
A direct computation shows that $\widetilde{\phi}_{ij}^*\sigma = \sigma$. At the same time $\{\widetilde{\phi}_{ij}\}$ define the same class in $H^1(B,Aut^0_{X/B})$ as $\{\phi_{ij}\}$. This proves the surjectivity of the map 
$$ H^1(B, Aut^{0,\sigma}_{X/B}) \to H^1(B, Aut^0_{X/B}).$$
\end{proof}

\begin{cor}
\label{sha tate are symplectic}
Let $\pi\colon X\to B $ be a Lagrangian fibration and $\pi^s\colon X^s\to B$ be the twist of $\pi$ by an element $s\in \Sha$. Then $X^s$ is a holomorphic symplectic manifold and $\pi^s$ is a Lagrangian fibration.
\end{cor}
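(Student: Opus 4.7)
The plan is to read this as an essentially immediate consequence of Theorem \ref{Sha from symplectic automorphisms}, which has done all the hard work of replacing an arbitrary representative of $s \in \Sha$ by a \v{C}ech cocycle of \emph{symplectic} automorphisms. Once we have such a representative, the gluing instructions used to build $X^s$ automatically carry the local symplectic form with them.

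More concretely, I would first invoke Theorem \ref{Sha from symplectic automorphisms} to pick a \v{C}ech cocycle $\{\phi_{ij}\}$, for some open cover $B = \bigcup U_i$, such that each $\phi_{ij} \in Aut^{0,\sigma}_{X/B}(U_{ij})$, i.e., $\phi_{ij}^* \sigma = \sigma$ on $\pi^{-1}(U_{ij})$. By construction, the twist $X^s$ is obtained by regluing the pieces $\pi^{-1}(U_i)$ along these automorphisms. Denote by $\sigma_i$ the restriction of $\sigma$ to $\pi^{-1}(U_i)$, viewed as a holomorphic $2$-form on the $i$-th chart of $X^s$. The relation $\phi_{ij}^*\sigma = \sigma$ says precisely that $\sigma_i$ and $\sigma_j$ are identified under the gluing map $x \sim \phi_{ij}(x)$, so the forms $\sigma_i$ assemble into a single holomorphic $2$-form $\sigma^s$ on $X^s$. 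Closedness and non-degeneracy are pointwise (respectively: local) properties, and they hold for each $\sigma_i$ because they hold for $\sigma$; hence $\sigma^s$ is holomorphic symplectic.

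Finally, I would verify that $\pi^s\colon X^s \to B$ is a Lagrangian fibration with respect to $\sigma^s$. The automorphisms $\phi_{ij}$ commute with $\pi$, so set-theoretically the fibers of $\pi^s$ coincide with those of $\pi$, and in particular have dimension $n = \tfrac{1}{2}\dim X^s$. For a smooth fiber $F_b$ with $b \in U_i$, the restriction $\sigma^s|_{F_b}$ is computed in the $i$-th chart as $\sigma_i|_{F_b} = \sigma|_{F_b} = 0$, since $\pi$ is Lagrangian. This gives the Lagrangian condition on a Zariski open set of $B$, which suffices.

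The only potential subtlety — and, really, the only substantive content of the corollary — was already overcome in Theorem \ref{Sha from symplectic automorphisms}: an arbitrary cocycle of vertical automorphisms fails to glue $\sigma$ into a global form, and it is precisely the surjectivity $H^1(B, Aut^{0,\sigma}_{X/B}) \twoheadrightarrow H^1(B, Aut^0_{X/B})$ that lets us replace the representative by a symplectic one. Given that input, the corollary reduces to unwinding definitions, so there is no real obstacle remaining.
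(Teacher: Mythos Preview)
Your proof is correct and follows essentially the same approach as the paper: invoke Theorem \ref{Sha from symplectic automorphisms} to represent $s$ by a cocycle of $\sigma$-preserving automorphisms, glue the local restrictions of $\sigma$ into a global holomorphic symplectic form $\sigma^s$ on $X^s$, and note that $\pi^s$ is Lagrangian because it locally coincides with $\pi$. Your write-up is slightly more explicit about the pointwise nature of closedness and non-degeneracy and about the Lagrangian condition on fibers, but there is no substantive difference.
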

\begin{proof}
Every element $s\in\Sha$ can be represented by a \v{C}ech cocycle $\phi_{ij}\in Aut^{0,\sigma}_{X/B}(U_{ij})$ (Theorem \ref{Sha from symplectic automorphisms}). The manifold $X^s$ is obtained by gluing the holomorphic symplectic manifolds $\pi^{-1}(U_i)$ by the automorphisms $\phi_{ij}$. Since the automorphisms $\phi_{ij}$  preserve $\sigma$, the forms $\sigma|_{\pi^{-1}(U_i)}$ are glued to a well-defined holomorphic symplectic form $\sigma^{s}$ on $X^{s}$. Locally $\pi^s$ coincides with $\pi$, therefore it is Lagrangian with respect to $\sigma^s$.
\end{proof}


\subsection{Degenerate twistor deformations as Shafarevich--Tate twists}\label{deg.tw. as ShT}

Let $\pi\colon X\to B$ be a Lagrangian fibration. Consider the Shafarevich--Tate family $\mathfrak{X}_{\Sha \mathrm{T}} \to \tSha$ (Proposition \ref{prop_ShT_family}) and the degenerate twistor family $\mathfrak{X}_{deg.tw} \to \C$ (Definition \ref{definition of dg.tw}) associated to $\pi$. Fix a holomorphic symplectic form $\sigma$ on $X$. It induces an isomorphism $\tSha\cong H^{1,1}(B)$. Let $[\alpha]\in H^{1,1}(B)$ denote the class of a hyperplane section of $B=\mathbb P^n$. The class $[\alpha]$ induces the natural isomorphism $H^{1,1}(B)\cong\C$.

\begin{thrm}\label{deg.tw=ShaT}
There exists an isomorphism of $\mathfrak{X}_{\Sha \mathrm{T}}$ and $\mathfrak{X}_{deg.tw}$ as families of complex manifolds that lifts the isomorphism $\tSha\cong \C$. In other words, we have a commutative diagram:
\[
\xymatrix{
\mathfrak{X}_{deg.tw} \ar[r]^{\Phi} \ar[d]_{\Pi_{deg.tw}} & \mathfrak{X}_{\Sha \mathrm T} \ar[d]^{\Pi_{\Sha \mathrm T}} \\
\C \ar[r]^{\simeq}&  \tSha.
}
\]
\end{thrm}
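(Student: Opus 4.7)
The plan is to construct the isomorphism $\Phi$ directly by locally trivializing $\mathfrak{X}_{deg.tw}$ via Moser-type biholomorphisms, and then checking that the resulting transition cocycle over $B$ coincides with the one defining $\mathfrak{X}_{\Sha\mathrm T}$. Fix the identification $\C \xrightarrow{\sim} \tSha$ determined by $\sigma$ and the hyperplane class, i.e.\ the one sending $t \in \C$ to the image of $t[\alpha]$ under the composition $H^{1,1}(B) = H^1(B,\Omega^1_B) \xrightarrow{\pi_*\sigma} H^1(B, \pi_*T_{X/B}) = \tSha$.

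First, I would cover $B = \mathbb P^n$ by Stein opens $U_i$ with $H^2(U_i,\C)=0$, and on each $U_i$ choose a smooth primitive $\beta_i$ of $\alpha|_{U_i}$, arranged so that the difference $\beta_j - \beta_i$ on $U_{ij}$ is a holomorphic $1$-form. This can be ensured by choosing $\beta_i$ as a Dolbeault primitive of type $(1,0)$: solve $\bar\partial\gamma_i = \alpha|_{U_i}$ for a $(1,0)$-form $\gamma_i$ on the Stein set $U_i$, and then adjust by a holomorphic correction so that $d\beta_i = \alpha|_{U_i}$. Then $\bar\partial(\beta_j - \beta_i)=0$, so $\beta_j - \beta_i$ is holomorphic, and by the Dolbeault--\v{C}ech isomorphism the cocycle $\{\beta_j - \beta_i\}$ represents $[\alpha]$ in $H^1(B, \Omega^1_B)$.

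Next, I would apply Lemma \ref{holomorphic moser} to the restricted fibration $\pi|_{\pi^{-1}(U_i)}$ with the exact $2$-form $\alpha|_{U_i} = d\beta_i$, obtaining a family of biholomorphisms $\phi^{(i)}_t\colon (\pi^{-1}(U_i), I_0) \to (\pi^{-1}(U_i), I_t)$. These assemble into local trivializations
\[
\Phi_i \colon \pi^{-1}(U_i) \times \C \xrightarrow{\sim} \Pi_{deg.tw}^{-1}(U_i \times \C), \qquad (x,t) \mapsto (\phi^{(i)}_t(x), t).
\]
Tracing through the proof of Moser's lemma shows that $\phi^{(i)}_t = \exp(tX^{(i)})$ for the vertical vector field $X^{(i)}$ determined by $\iota_{X^{(i)}}\sigma = -\pi^*\beta_i$ via the isomorphism of Theorem \ref{tangent cotangent isomorphism}. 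On overlaps, $\Phi_j^{-1}\circ\Phi_i$ is a biholomorphism of $\pi^{-1}(U_{ij})\times \C$ restricting to the identity at $t=0$. Because the restrictions of $X^{(i)}$ and $X^{(j)}$ to any smooth torus fiber $F_b$ are translation-invariant, they commute there, hence $\exp(tX^{(i)})$ and $\exp(-tX^{(j)})$ commute on $\pi^{-1}(U_{ij}\cap B^\circ)$. Therefore the transition equals $\exp(tv_{ij})$ where $v_{ij} := X^{(i)} - X^{(j)}$ corresponds under $\sigma$ to $\pi^*(\beta_j - \beta_i)$; since this is a holomorphic $1$-form on $U_{ij}$ by the first step, $v_{ij}$ is a holomorphic section of $\pi_*T_{X/B}$ over $U_{ij}$. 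By continuity and torsion-freeness the identity extends from $\pi^{-1}(U_{ij}\cap B^\circ)$ to $\pi^{-1}(U_{ij})$.

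The \v{C}ech cocycle $\{v_{ij}\}$ is thus the image of $\{\beta_j - \beta_i\}$ under $\pi_*\sigma$, so it represents the class $s_0 \in \tSha$ corresponding to $[\alpha]$ under $\C\simeq\tSha$. The transition data $\{\exp(tv_{ij})\}$ for $\mathfrak X_{deg.tw}$ over the slice $t\in\C$ then matches the data used in Proposition \ref{prop_ShT_family} to construct $\mathfrak X_{\Sha\mathrm T}$ over $t\cdot s_0 \in \tSha$. The maps $\Phi_i$ therefore glue to a global isomorphism $\Phi\colon\mathfrak X_{deg.tw} \xrightarrow{\sim} \mathfrak X_{\Sha\mathrm T}$ fitting into the stated commutative diagram. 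The main obstacle is ensuring that the Moser flow $\phi^{(i)}_t$ depends \emph{holomorphically} on the complex parameter $t$ rather than merely smoothly (so that the $\Phi_i$ are honest biholomorphisms of $\pi^{-1}(U_i)\times\C$), and that the \v{C}ech representative of $[\alpha]$ is genuinely a cocycle of holomorphic $1$-forms; both hinge on the careful Dolbeault-type choice of the primitives $\beta_i$ in the first step, where the type $(1,1)$ of $\alpha$ and the Stein--acyclicity of the $U_i$ are essential.
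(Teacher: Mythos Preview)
Your proposal is correct and follows essentially the same route as the paper: choose $(1,0)$-primitives $\beta_i$ of $\alpha|_{U_i}$ so that $\{\beta_j-\beta_i\}$ is the holomorphic \v{C}ech cocycle for $[\alpha]$, exponentiate the corresponding vertical vector fields to get local trivializations of $\mathfrak X_{deg.tw}$, and identify the transition data with the cocycle $\{\exp(tv_{ij})\}$ defining $\mathfrak X_{\Sha\mathrm T}$. The paper handles your flagged ``main obstacle'' (holomorphic dependence on $t$) more cleanly by defining the vector field $w_i$ directly on the product $\pi^{-1}(U_i)\times\C$ via $\iota_{w_i}\sigma = t\,\pi^*\beta_i$ and taking its time-$1$ flow, so holomorphicity in all variables is automatic rather than something to be checked a posteriori.
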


{\bf Proof. Step 1:} First, let us recall the relation between Dolbeault and \v{C}ech cohomology groups of the sheaf $\Omega^1_B$. The class $[\alpha]$ considered as a Dolbeault cohomology class, can be represented by a closed $(1,1)$-form $\alpha$ on $B$. Take an open cover $B = \bigcup U_i$. The class $[\alpha]$ may be represented by a \v{C}ech cocycle $\{a_{ij}\}$ where $a_{ij}\in H^0(U_{ij},\Omega^1_B)$ are holomorphic $1$-forms on $U_{ij}$. Given a closed $(1,1)$-form $\alpha$, one can construct $\{a_{ij}\}$ as follows. There exists a $(1,0)$-form $a_i$ on $U_i$ such that $\omega|_{U_i} = da_i = \dibar a_i$ for every $i$. We define the form $a_{ij}$ as 
\[
a_{ij}:= a_i|_{U_{ij}} - a_j|_{U_{ij}}.
\]
\hfill

{\bf Step 2:} In this step we will construct an isomorphism from $\mathfrak{X}_{\Sha\mathrm{T}}$ to $\mathfrak{X}_{deg.tw}$ locally on the base $B$. Consider the vertical vector fields $w_i$ on $\pi^{-1}(U_i)\times\C \subset X\times \C$ defined by the formula

$$
\iota_{w_i}\sigma =  t\pi^*a_i
$$
Here $t$ is the linear coordinate function on $\C$ and $\sigma$ is the pullback of the holomorphic symplectic form on $X$ to $\pi^{-1}(U_i) \times \C$. For any $t\in \C$ we have the following identity on $\pi^{-1}(U_i)\times\{t\}$:
$$
\mathsf L_{w_i} \sigma = t\pi^*da_i = t\pi^*\alpha|_{\pi^{-1}(U_i)}.
$$
Let $\phi_i$ be the exponential of the vector field $w_i$. This gives the following equality of forms on $\pi^{-1}(U_i) \times \{t\}$:
$$
    \phi_i^*\sigma = \sigma + t\pi^*\alpha,
$$
so that for any $t\in\C$ the map $\phi_i$ is a biholomorphism
$$
\phi_i|_{\pi^{-1}(U_i)\times\{t\}}\colon (\pi^{-1}(U_i),I_t) \to \pi^{-1}(U_i),
$$
where $I_t$ is the complex structure from Lemma \ref{holomorphic moser}. In fact, since every automorphism $\phi_i$ commutes with the projection $\pi^{-1}(U_i)\times \C\to \C$, it induces a biholomorphism
$$
    \phi_i\colon (\pi^{-1}(U_i)\times\C,I_{tw})\to \pi^{-1}(U_i)\times\C.
$$
Here $I_{tw}$ is the complex structure introduced in Definition \ref{definition of dg.tw}.

\hfill

{\bf Step 3:} It remains to prove that the maps $\phi_i$ can be glued together to a global isomorphism of families $\mathfrak{X}_{deg.tw} \to \mathfrak{X}_{\Sha \mathrm{T}}$. Define the vector fields $w_{ij}$ on $\pi^{-1}(U_{ij})\times\C$ by
$$
\iota_{w_{ij}}\sigma = t\pi^*(a_{ij}).
$$
Let $\phi_{ij}$ be the exponential of $w_{ij}$. Then $\phi_{ij}$ is a holomorphic automorphism of $\pi^{-1}(U_{ij})\times\C$. Since $a_{ij} = a_j-a_i$ one has
$$
\phi_{ij} = \phi_j\circ\phi_i^{-1}.
$$
By Proposition \ref{prop_ShT_family} the manifold $\mathfrak{X}_{\Sha\mathrm{T}}$ is the twist of $X\times\tSha$ by the cocycle $\{\phi_{ij}\}$. Therefore the maps $\phi_i$ glue together to a global biholomorphism $\Phi \colon \mathfrak{X}_{deg.tw}\to \mathfrak{X}_{\Sha\mathrm{T}}$.\qed


\subsection{The period map of a Shafarevich--Tate family}\label{period map}

Here we will describe the period map of a Shafarevich--Tate family. That can be viewed as a generalization of a Markman's result \cite[Thm. 7.11]{markman2014lagrangian}.

The following definition is standard. Let $X$ be a hyperk\"ahler manifold. Its \emph{period domain} $\mathcal{D}$ is defined as 
\[
\mathcal{D}:= \{[x] \ | \ q(x,x)=0 \text{ and } q(x, \overline{x})>0\} \subset \mathbb{P}(H^2(X, \C)) 
\]
where $q$ is the BBF form. Let $p \colon \mathcal{X} \to T$ be a holomorphic family of hyperk\"ahler manifolds over a connected simply connected base $T$. One can associate the \emph{period map} $\mathcal{P}_T \colon T \to \mathcal{D}$ to $p$. This is the holomorphic map defined as 
\[
t \mapsto [H^{2,0}(p^{-1}(t))].
\]

Its importance is manifested by the Torelli theorem: let ${Teich}(X)$ be the Teichm\"uller space of $X$, i.e., the space of complex structures of hyperk\"ahler type on $X$ modulo isotopies. Then  the period map
\[
\mathcal{P} \colon Teich(X) \to \mathcal{D}
\]
is generically injective on connected components of $Teich(X)$ \cite{huybrechts2012global}.

\begin{prop}
Let $\mathfrak{X}_{deg.tw} \to \C$ be the degenerate twistor family of a Lagrangian fibration $\pi \colon X \to B$. Then its period map $\mathcal{P}_{deg.tw} \colon \C \to \mathcal{D}$ is injective. The image of $\mathcal P_{deg.tw}$ is the entire curve that is obtained as the intersection of $\mathcal{D}$ with a projective line lying on the quadric $\{q(x,x) = 0\}$.
\end{prop}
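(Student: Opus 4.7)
The plan is to compute the period map explicitly as an affine-linear map $t \mapsto [\sigma + t\pi^*\alpha]$ into $\mathbb P(H^2(X, \C))$ and then do elementary Hodge-theoretic linear algebra with the BBF form.

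By the construction of the degenerate twistor family (Definition \ref{definition of dg.tw}), the fiber over $t \in \C$ is $(X, I_t)$ with holomorphic symplectic form $\sigma_t := \sigma + t\pi^*\alpha$ viewed as a class in the fixed vector space $H^2(X, \C)$. Hence $H^{2,0}(X_t)$ is spanned by $[\sigma_t]$ and
\[
\mathcal P_{deg.tw}(t) = [\sigma + t\pi^*\alpha] \in \mathbb P(H^2(X, \C)).
\]
This is an affine-linear parametrization of the projective line $\ell \subset \mathbb P(H^2(X, \C))$ spanned by $[\sigma]$ and $[\pi^*\alpha]$, with the point $[\pi^*\alpha]$ missing. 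The classes $[\sigma]$ and $[\pi^*\alpha]$ are linearly independent because $[\sigma]$ is of Hodge type $(2,0)$ while $\pi^*\alpha$ is of type $(1,1)$ in the original complex structure on $X$; consequently $\mathcal P_{deg.tw}$ is injective with image $\ell \setminus \{[\pi^*\alpha]\}$.

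It remains to show that $\ell$ lies entirely on the quadric $\{q(x,x) = 0\}$ and that $\ell \setminus \{[\pi^*\alpha]\} = \ell \cap \mathcal D$. The vanishings $q([\sigma], [\sigma]) = 0$ and $q([\pi^*\alpha], [\pi^*\alpha]) = 0$ are immediate: the first by orthogonality of the Hodge decomposition, the second by the Fujiki formula, since $\alpha^{n+1} = 0$ on $B = \mathbb P^n$ so $(\pi^*\alpha)^{2n} = 0$ on $X$. For the cross term, I would use that $[\sigma_t]$ remains $q$-isotropic for every $t$ (being of type $(2,0)$ in $I_t$), and expand $q([\sigma_t], [\sigma_t]) = 0$ as a polynomial in $t$ to conclude $q([\sigma], [\pi^*\alpha]) = 0$. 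Taking a real representative $\alpha$ (permissible since the class of $\alpha$ lies in $H^{1,1}(B, \R)$ and Lemma \ref{holomorphic moser} makes $I_t$ depend only on the cohomology class), the computation
\[
q([\sigma_t], \overline{[\sigma_t]}) = q([\sigma], \overline{[\sigma]}) + \bar t\,q([\sigma],[\pi^*\alpha]) + t\,q([\pi^*\alpha],\overline{[\sigma]}) + t\bar t\,q([\pi^*\alpha],[\pi^*\alpha]) = q([\sigma], \overline{[\sigma]}) > 0
\]
holds for every $t \in \C$, while $[\pi^*\alpha] \notin \mathcal D$ because $q(\pi^*\alpha, \overline{\pi^*\alpha}) = 0$. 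So the image of $\mathcal P_{deg.tw}$ is exactly $\ell \cap \mathcal D$.

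The whole argument is essentially linear algebra once the period is identified as linear in $t$, so the step I expect to require the most care is that very first identification: one must confirm that the Hodge line $H^{2,0}(X, I_t)$ really is spanned by the class of $\sigma_t$ inside the fixed cohomology group $H^2(X, \C)$, rather than by some correction living in the varying $(2,0)$-component of the Hodge decomposition for $I_t$. This is immediate from the construction of $I_t$ in Subsection \ref{parabolae explained}, where $\sigma_t$ is tautologically the holomorphic symplectic form, but it is the one point where anything beyond formal manipulation of the BBF form is used.
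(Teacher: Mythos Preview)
Your proof is correct and follows essentially the same approach as the paper: identify $\mathcal P_{deg.tw}(t)=[\sigma+t\pi^*\alpha]$, observe this parametrizes the line $\ell$ through $[\sigma]$ and $\eta:=[\pi^*\alpha]$ minus the point $[\eta]$, and check that $\ell\subset\{q=0\}$ while $[\eta]\notin\mathcal D$ because $q(\eta,\bar\eta)=0$. Your write-up is in fact more detailed than the paper's (which omits the verification $q(\sigma_t,\bar\sigma_t)>0$); one small simplification is that $q([\sigma],[\pi^*\alpha])=0$ follows immediately from Hodge orthogonality in the original complex structure, so you need not pass through the isotropy of $[\sigma_t]$ for variable $t$.
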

\begin{proof}
Let $\alpha$ be a K\"ahler form on $B$. The period map of the degenerate twistor family is given by
$$
\mathcal P_{deg.tw}(t) = [\sigma + t\pi^*\alpha]
$$
Let $\eta$ denote the class of $\pi^*\alpha$ in $H^2(X,\C)$. Consider the projective line $L:=\mathbb P(\{\sigma,\eta\})$ lying in $\mathbb P(H^2(X,\C))$. The map $\mathcal P_{deg.tw}$ sends $\C$ isomorphically to $L\,\setminus\, [\eta]$. All points of this line satisfy $q(x,x) = 0$. The point corresponding to $\eta$ does not lie in the period domain $\mathcal D$ since $q(\eta,\overline{\eta}) = 0$. Therefore, the image of $\mathcal P_{deg.tw}$ is $L\cap\mathcal D$.
\end{proof}

\begin{rmk}
One may think of $\tSha$ as an entire curve in the Teichm\"uller space of $X$ and $\Sha^0$ as its image inside the moduli space $\mathcal{M}$. The moduli space $\mathcal{M}$ can be defined as $Teich(X)$ modulo the mapping class group of $X$. It is known that the action of the mapping class group on $Teich(X)$ is very non-discrete, therefore, $\mathcal{M}$ is non-Hausdorff. This phenomenon is often seen already at the level of Shafarevich--Tate groups (Theorem \ref{density}) 
\end{rmk}


\section{Connected component of unity of a Shafarevich--Tate group}\label{connected component}

\subsection{The sheaf $\Gamma$}

Consider the exponential exact sequence
\[
0 \to \Z_X \to \O_X \to \O^{\times}_X \to 0
\]
Apply the higher direct image functor $R^{\bullet}\pi_*(-)$ to it. Since the fibers of $\pi$ are connected and proper, we have $\pi_*\Z_X \simeq \Z_B$, $\pi_*\O_X \simeq \O_B$, and $\pi_*\O^{\times}_X \simeq \O^{\times}_B$. The sequence
\[
0 \to \pi_*\Z_X \to \pi_*\O_X \to \pi_*\O^{\times}_X \to 0
\]
is therefore exact. Hence we get a long exact sequence of sheaves:
\[
0 \to R^1\pi_*\Z_X \to R^1\pi_*\O_X \to R^1\pi_*\O^{\times}_X \to R^2\pi_*\Z_X \to\ldots
\]

\begin{prop}\label{R^1pi_*Z maps to Gamma}
The isomorphism $$\tilde{\omega}\colon R^1\pi_*\O_X \to \pi_*T_{X/B}$$ from Proposition \ref{Matsushita isomorphism} sends $R^1\pi_*\Z_X \subset R^1\pi_*\O_X$ into $\Gamma:=\ker(\pi_*T_{X/B}\to Aut^0_{X/B})$.
\end{prop}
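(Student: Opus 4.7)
The claim is local on $B$, so I fix an open $U\subset B$ and a section $\xi\in (R^1\pi_*\Z_X)(U)$. Setting $v := \tilde\omega(\xi) \in (\pi_*T_{X/B})(U)$ yields a vertical holomorphic vector field on $\pi^{-1}(U)$, and the assertion $\tilde\omega(\xi)\in\Gamma(U)$ is equivalent to $\exp(v) = \mathrm{id}$ as a holomorphic automorphism of $\pi^{-1}(U)$ over $U$. This is the statement I plan to establish.

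First, I would reduce to the smooth locus. The fixed locus of $\exp(v)$ is a closed analytic subset of $\pi^{-1}(U)$, so to show it equals all of $\pi^{-1}(U)$ it suffices to show it contains a non-empty open subset. Since $D = B \setminus B^\circ$ is a proper closed analytic subset of $B$ (indeed a divisor) and $X$ is irreducible, $\pi^{-1}(U\cap B^\circ)$ is dense in $\pi^{-1}(U)$, so it is enough to verify the claim over $B^\circ$.

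Over $B^\circ$, $\pi$ is a smooth proper torus fibration, and both $R^1\pi_*\Z_X|_{B^\circ}$ and $\Gamma|_{B^\circ}$ are local systems of rank $2n$: at $b\in B^\circ$ the stalks are respectively $H^1(F_b,\Z)$ and the period lattice $\Lambda_b := \ker(\exp\colon V_b \to F_b = Aut^0(F_b))\subset V_b := H^0(F_b, T_{F_b})$. The proposition then reduces to the pointwise claim that the fiberwise Matsushita isomorphism $\tilde\omega_b\colon H^1(F_b,\O_{F_b})\xrightarrow{\sim} V_b$---contraction with $\omega$ by Remark \ref{not projective}---sends $H^1(F_b,\Z)$ into $\Lambda_b$.

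This pointwise statement amounts to the commutativity of
\[
\begin{CD}
0 @>>> H^1(F_b,\Z) @>>> H^1(F_b,\O_{F_b}) @>>> \mathrm{Pic}^0(F_b) @>>> 0 \\
@. @VVV @V{\tilde\omega_b}VV @VVV \\
0 @>>> \Lambda_b @>>> V_b @>>> F_b @>>> 0
\end{CD}
\]
whose top row is the standard exponential sequence of the torus $F_b$ and whose bottom row is the exponential for $Aut^0(F_b) = F_b$. The hard part will be establishing this compatibility: one must verify that the polarization induced on $F_b$ by $\omega$ (integral on fibers by Corollary \ref{relative polarisation}) is compatible with the self-duality of $F_b$ coming from the symplectic form $\sigma$, so that the middle isomorphism descends to a well-defined group map $\mathrm{Pic}^0(F_b)\to F_b$ on quotients, and hence by the snake lemma restricts to a map $H^1(F_b,\Z)\to \Lambda_b$ carrying integer Hodge classes into the period lattice.
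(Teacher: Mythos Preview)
Your overall strategy---reduce to the smooth locus via the ``automorphism trivial on a dense open is trivial'' argument, then verify on a smooth fiber that the polarization map carries the integer lattice to the period lattice---is exactly the paper's approach. The reduction step is fine.

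Where you go off course is in framing the fiberwise verification as a ``compatibility between the polarization from $\omega$ and the self-duality of $F_b$ coming from $\sigma$''. There is no self-duality of $F_b$ coming from $\sigma$: the form $\sigma$ restricts to zero on the Lagrangian fiber $F_b$, and its only role is global, in identifying $\pi_*T_{X/B}\cong\Omega^1_B$. Fiberwise, by Remark~\ref{not projective} the map $\tilde\omega_b\colon H^1(F_b,\O_{F_b})\to H^0(F_b,T_{F_b})$ is nothing but the polarization map
\[
H^{0,1}(F_b)\longrightarrow H^{1,0}(F_b)^{*}\cong H^0(F_b,T_{F_b})
\]
associated to the class $[\omega]|_{F_b}$. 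Once $\omega$ is chosen so that $[\omega]|_{F_b}$ is integral (Corollary~\ref{relative polarisation}), this is literally the map induced on complex coefficients by an integral pairing $H^1(F_b,\Z)\otimes H^1(F_b,\Z)\to\Z$, and hence sends $H^1(F_b,\Z)$ into $H_1(F_b,\Z)=\Lambda_b$ directly. No snake-lemma detour through $\mathrm{Pic}^0(F_b)\to F_b$ is needed; the ``hard part'' you flag dissolves once you drop the reference to $\sigma$ and recognize $\tilde\omega_b$ as the polarization map itself.
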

\begin{proof}
We need to prove that the composition of maps 
$$
R^1\pi_*\mathbb Z\xrightarrow{i} R^1\pi_*\mathcal O_X\xrightarrow{\widetilde\omega} \pi_*T_{X/B}\xrightarrow{\varepsilon} Aut^0_{X/B}
$$
vanishes. Let $b\in B^\circ$ and $F_b$ be a smooth fiber. Then $(R^1\pi_*\O_X)_b = H^1(F_b, \O_{F_b})$, $(\pi_*T_{X/B})|_b = H^0(F_b, TF_b)$, and $\widetilde{\omega}_b$ is the map
\[
H^{0,1}(F_b) \to H^{1,0}(F_b)^{*} \cong H^0(F_b, TF_b),
\]
given by the polarization $[\omega]_{F_b}$ on the abelian variety $F_b$. By Corollary \ref{relative polarisation} we can choose $\omega$ in such a way that this polarization is integral, so $\widetilde{\omega}_b$ maps $H^1(F_b, \Z) \subset H^1(F_b, \O_{F_b})$ to $\Gamma_b = H_1(F_b, \Z) \subset H^0(F_b, TF_b)$.

It follows that the statement of the lemma holds after restriction to $B^{\circ}$. Consider a local section $\tau$ of $R^1\pi_*\Z_X$ over an open subset $U \subset B$. Denote $U \bigcap B^{\circ}$ by $U^{\circ}$. We have seen that restriction of the automorphism $(\varepsilon\circ\widetilde{\omega} \circ i)(\tau)$ to $\pi^{-1}(U_\circ)$ is trivial. An automorphism that is trivial on a dense open subset is trivial. Hence, the vertical automorphism $(\varepsilon \circ \widetilde{\omega} \circ i)(\tau)$ of $\pi^{-1}(U)$ is trivial.
\end{proof}

We have constructed a morphism of sheaves $\alpha\colon R^1\pi_*\Z_X \to \Gamma$. Note that it is injective, since it is a composition of an isomorphism $\widetilde{\omega} \colon R^1\pi_*\O_X \to \pi_*T_{X/B}$ and an embedding $i \colon R^1\pi_*\Z_X \to R^1\pi_*\O_X$.

There is the following diagram with exact rows:
\[
\xymatrix{
0 \ar[r]& \Gamma \ar[r]& \pi_*T_{X/B} \ar[r]^{\varepsilon}& Aut^0_{X/B} \ar[r]& 0\\
0 \ar[r]& R^1\pi_*\Z_X \ar[r]^{i} \ar[u]^{\alpha} & R^1\pi_*\O_X \ar[u]_{\widetilde{\omega}} \ar[r]& R^1\pi_*\O^{\times}_X 
}
\]

\hfill

Recall the \textit{local invariant cycle theorem}:

\begin{prop}[{\cite[Cor. 6.2.9]{beilinson2018faisceaux}}]
\label{local invariant cycle}
Let $f \colon X \to Y$ be a proper map of complex algebraic varieties with $X$ non-singular. For a subset $U \subset Y$, denote by $U^{\circ}$ the set of non-critical values of $f$ in $U$. Then for any $b \in Y$ there exists a small ball $U_b \subset Y$ with the center $b$ such that the following holds.
\begin{itemize}
\item[(1)]For any $i$ the following groups are isomorphic: $H^i(f^{-1}(b), \Q) \simeq H^i(f^{-1}(U_b), \Q)$;
\item[(2)] For any point $b^{\circ} \in U_b^{\circ}$ the cohomology group $H^i(f^{-1}(U_b),\Q)$ surjects onto the local monodromy invariants $H^i(f^{-1}(b^\circ),\Q)^{\pi_1(U_b^\circ)} = R^i\pi_*\Q_X(U_b^\circ)$.
\end{itemize}
\end{prop}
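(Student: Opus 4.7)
The plan is to separate the statement into its two parts: part (1) is a local topological triviality for proper maps, while part (2) is the substantive local invariant cycle theorem and requires genuine input from the theory of perverse sheaves.

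For part (1), the key is that for any sufficiently small ball $U_b \subset Y$ around $b$, the preimage $f^{-1}(U_b)$ deformation retracts onto $f^{-1}(b)$. I would obtain this by fixing a Whitney stratification of $f$ and applying Thom's first isotopy lemma, which provides the stratified retraction. Equivalently, since $f$ is proper the sheaf $R^if_*\Q_X$ is constructible; proper base change identifies its stalk at $b$ with $H^i(f^{-1}(b), \Q)$, and for small enough $U_b$ the sections $H^0(U_b, R^if_*\Q_X) \simeq H^i(f^{-1}(U_b), \Q)$ agree with this stalk.

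For part (2), I would first rewrite the restriction map as
$$
H^i(f^{-1}(U_b), \Q) \simeq H^0(U_b, R^if_*\Q_X) \xrightarrow{r} H^0(U_b^\circ, R^if_*\Q_X) \simeq H^i(f^{-1}(b^\circ), \Q)^{\pi_1(U_b^\circ)},
$$
the last isomorphism using that $R^if_*\Q_X|_{U_b^\circ}$ is a local system (by smoothness of $f$ over $U_b^\circ$) whose global sections over the connected open $U_b^\circ$ are precisely the monodromy invariants. The entire claim thus reduces to surjectivity of the restriction $r$.

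The main obstacle is this surjectivity, which is not formal. My plan is to invoke the decomposition theorem of \cite{BBDG}: the derived pushforward $Rf_*\Q_X[\dim X]$ decomposes as a finite direct sum of shifted simple perverse sheaves of the form $IC(\overline{Z_\alpha}, L_\alpha)[k_\alpha]$, with $Z_\alpha \subset Y$ smooth locally closed and $L_\alpha$ an irreducible local system on $Z_\alpha$. Summands with $\overline{Z_\alpha}$ a proper closed subvariety of $Y$ vanish identically on $U_b^\circ$ and hence cannot obstruct surjectivity; the remaining summands come from local systems on open dense subsets of $Y$ extended as intermediate extensions. For such an intermediate extension, a standard property is that its stalk at $b$ surjects onto the local monodromy invariants of $L_\alpha$ near $b$. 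Extracting the degree $i$ cohomology sheaf from the direct sum decomposition and assembling these surjections yields surjectivity of $r$. The remaining ingredients --- proper base change, Whitney-stratified topological triviality, and the identification of local-system sections with monodromy invariants --- are formal.
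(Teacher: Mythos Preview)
The paper does not prove this proposition; it is quoted verbatim as a result from \cite[Cor.~6.2.9]{BBDG} and used as a black box, so there is no ``paper's own proof'' to compare against. Your outline is the standard route to the result and is correct: part~(1) is exactly constructibility of $Rf_*\Q_X$ plus proper base change (or Thom's first isotopy lemma), and part~(2) is the decomposition theorem, with the observation that IC summands supported on proper closed subvarieties of $Y$ necessarily have support inside the discriminant (since $Rf_*\Q_X$ restricts to a sum of shifted local systems over $Y^\circ$, which admits no nonzero direct summand with proper support), while for summands of full support the lowest cohomology sheaf $\mathcal{H}^{-\dim Y}(IC(Y,L_\alpha))$ is the ordinary sheaf pushforward $j_*L_\alpha$, whose stalk at $b$ is precisely the local monodromy invariants. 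The only point you leave implicit is that, for a fixed degree $i$, the contributions to $R^if_*\Q_X|_{U_b^\circ}$ come \emph{only} from the full-support summands in the correct shift (higher cohomology sheaves of an IC complex are supported on lower strata by the support condition and hence vanish on $U_b^\circ$); once this is said, the surjectivity of $r$ is immediate.
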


\begin{rmk}
Let $\pi \colon X \to B$ be a Lagrangian fibration on a hyperk\"ahler manifold. For each $b \in B$ there exists an open subset $U \subset B$ containing $b$ such that $\pi^{-1}(U)$ is a smooth algebraic variety \cite[Cor. 3.4]{soldatenkov2021moser}. Therefore, the  local invariant cycle theorem can be applied in this situation.
\end{rmk}

Let $\Gamma_{\Q}$ denote the sheaf $\Gamma \o \Q_B$.

\begin{prop}\label{Gamma is almost R^1pi_*Z}
The map $\alpha\otimes\Q\colon R^1\pi_*\Q\to \Gamma_\Q$ is an isomorphism.
\end{prop}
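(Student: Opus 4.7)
The plan is to verify the isomorphism stalk by stalk. The map $\alpha$ is injective by the remark immediately following Proposition \ref{R^1pi_*Z maps to Gamma}, so $\alpha\o\Q$ is injective, and only surjectivity on stalks remains.

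\textbf{Smooth locus.} Over $B^\circ$ the fibers are smooth abelian varieties, and fibrewise the exponential $\pi_*T_{X/B}\to Aut^0_{X/B}$ is the universal cover of the abelian fiber by its Lie algebra; hence $\Gamma|_{B^\circ}\simeq R_1\pi_*\Z|_{B^\circ}$ is the period local system. By Corollary \ref{relative polarisation} the restriction $[\omega]|_{F_{b^\circ}}$ is integral of constant degree, and under the identifications above $\alpha|_{B^\circ}$ becomes the polarization isogeny $H^1(F_{b^\circ},\Z)\to H_1(F_{b^\circ},\Z)$, which has finite cokernel. Thus $\alpha|_{B^\circ}\o\Q$ is an isomorphism of rank-$2n$ $\Q$-local systems.

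\textbf{Stalks on the discriminant.} Fix $b\in B\setminus B^\circ$ and a contractible ball $U_b\ni b$ small enough for the local invariant cycle theorem (Proposition \ref{local invariant cycle}) to apply; pick $b^\circ\in U_b^\circ:=U_b\cap B^\circ$. By proper base change, $(R^1\pi_*\Q)_b=H^1(F_b,\Q)$. I want to identify
$$(\Gamma_\Q)_b\simeq H_1(F_{b^\circ},\Q)^{\pi_1(U_b^\circ)}.$$
Restriction yields an injection $\Gamma(U_b)\hookrightarrow\Gamma(U_b^\circ)=H_1(F_{b^\circ},\Z)^{\pi_1(U_b^\circ)}$, because a holomorphic vertical vector field vanishing on a dense open set vanishes identically. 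Conversely, a monodromy-invariant class determines a flat section of $\Gamma|_{U_b^\circ}\subset\pi_*T_{X/B}|_{U_b^\circ}\simeq\Omega^1_B|_{U_b^\circ}$, i.e.\ a holomorphic $1$-form on $U_b^\circ$. The crucial technical point is that this form is locally bounded near the discriminant: periods of monodromy-invariant cycles remain bounded under degeneration (vanishing cycles go to zero, non-vanishing invariants stay bounded). By Riemann extension applied to the locally free sheaf $\pi_*T_{X/B}$ the form extends across $D$, and the exponential of the associated vector field is trivial on a dense open and hence everywhere, so the extension lies in $\Gamma(U_b)$.

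\textbf{Conclusion.} Proposition \ref{local invariant cycle} produces a surjection $H^1(F_b,\Q)\twoheadrightarrow H^1(F_{b^\circ},\Q)^{\pi_1(U_b^\circ)}$; composition with the rational polarization isomorphism $H^1(F_{b^\circ},\Q)^{\pi_1}\xrightarrow{\sim}H_1(F_{b^\circ},\Q)^{\pi_1}=(\Gamma_\Q)_b$ is, by construction of $\alpha$, precisely the stalk map $(\alpha\o\Q)_b$. Being both surjective (by the local invariant cycle theorem) and injective (since $\alpha\o\Q$ is), it is an isomorphism.

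The main obstacle lies in the second step: showing that the holomorphic $1$-form associated to a monodromy-invariant cycle is locally bounded near the discriminant. This is immediate from Picard--Lefschetz in the one-parameter case, but in general it is a statement about the limit mixed Hodge structure on the nearby cohomology of the Lagrangian fibration, which is where the real geometric content of the proposition sits.
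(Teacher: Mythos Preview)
Your overall strategy---stalkwise, using the local invariant cycle theorem and the polarization isomorphism over $B^\circ$---is exactly the paper's. However, you have introduced an unnecessary difficulty and then correctly flagged it as the ``main obstacle.''

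The step you do not need is the surjectivity of the restriction $\Gamma_\Q(U_b)\to H_1(F_{b^\circ},\Q)^{\pi_1(U_b^\circ)}$, i.e.\ the claim that every monodromy-invariant period extends across the discriminant via a boundedness/Riemann-extension argument. The paper never proves this and never uses it. What the paper uses instead is only the \emph{injectivity} of the restriction---which you already have, since $\Gamma_\Q$ is a subsheaf of the locally free sheaf $R^1\pi_*\O_X$, so a section vanishing on the dense open $U_b^\circ$ vanishes on $U_b$. The argument then goes: given $\gamma\in\Gamma_\Q(U_b)$, restrict to $\gamma^\circ$ on $U_b^\circ$; by the smooth-locus step find $\beta^\circ\in R^1\pi_*\Q(U_b^\circ)$ with $\alpha(\beta^\circ)=\gamma^\circ$; by the local invariant cycle theorem lift $\beta^\circ$ to $\beta\in R^1\pi_*\Q(U_b)$; then $\alpha(\beta)$ and $\gamma$ agree on $U_b^\circ$, hence on $U_b$ by injectivity of restriction. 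This is surjectivity of $\alpha\otimes\Q$ at $b$, and it entirely sidesteps the period-extension problem.

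In your Conclusion paragraph you write $H_1(F_{b^\circ},\Q)^{\pi_1}=(\Gamma_\Q)_b$, which is precisely the unproven identification from your second step. Replace that equality by the injective restriction $(\Gamma_\Q)_b\hookrightarrow H_1(F_{b^\circ},\Q)^{\pi_1}$, and rephrase the last sentence as ``$\alpha(\beta)$ and $\gamma$ have the same image under this injection, hence coincide.'' Then your proof is complete and coincides with the paper's; the entire boundedness discussion can be deleted.
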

\begin{proof}
\textbf{Step 1.} First, notice that the lemma holds after restriction to the non-critical set $B^{\circ} \subset B$. Indeed, if $F_b = \pi^{-1}(b)$ is a smooth fiber, then 
\[
\alpha_b \colon (R^1\pi_*\Q_X)_b = H^1(F_b, \Q) \to (\Gamma_{\Q})_b = H_1(F_b, \Q)
\]
is the map given by the polarization on the abelian variety $F$. Hence it is an isomorphism. Note that in general the same might not hold with integral coefficients.

\hfill

\textbf{Step 2.} Take a point $b\in B$ and let $U_b \subset B$ be as in Proposition \ref{local invariant cycle}. We need to prove that the map 
$$
    \alpha_{U_b}\colon H^1(U_b,\Q) \to \Gamma_\Q(U_b)
$$
is an isomorphism. We already know that it is injective, hence we only need to check surjectivity. Take a local section $\gamma \in \Gamma_\Q(U_b)$. Let $\gamma^\circ$ be its restriction to $U_b^\circ$. By {\bf Step 1} there exists $\beta^\circ \in H^0(U_b^\circ, R^1\pi_*\Q_X)$, such that $\alpha_{U_b^\circ}(\beta^\circ) = \gamma^\circ$. The section $\beta^\circ$ can be lifted to a section $\beta$ of $H^0(U_b,R^1\pi_*\Q_X) = H^1(\pi^{-1}(U_b),\Q)$ by the local invariant cycle theorem. The local sections $\alpha_{U_b}(\beta) \in \Gamma_\Q(U_b)$ and $\gamma\in \Gamma_\Q(U_b)$ coincide after restriction to $U_b^{\circ}$ by their construction. The sheaf $\Gamma_\Q$ is a subsheaf of the locally free sheaf $R^1\pi_*\mathcal O_X$, hence $\alpha_{U_b}(\beta)$ and $\gamma$ coincide.
\end{proof}


\subsection{The connected component of unity of Shafarevich--Tate groups}\label{Sha^0}
The goal of this Subsection is to describe the group $\Sha^0$, i.e., the connected component of unity of the group $\Sha$. This group fits into the short exact sequence
$$
    H^1(B,\Gamma) \to \tSha \to \Sha^0\to 0.
$$

In the previous Subsection, we constructed an injective map $\alpha\colon R^1\pi_*\mathbb Z\to \Gamma$ and proved that $\alpha$ is an isomorphism after tensoring with $\Q$ (Proposition $\ref{Gamma is almost R^1pi_*Z}$). Although we are mostly concerned with the sheaf $\Gamma$, the sheaf $R^1\pi_*\mathbb Z$ is often much easier to understand.

Define the subspace $W\subset H^2(X,\C)$ as the kernel of the restriction map 
$$
H^2(X, \C) \to H^0(B, R^2\pi_*\C_X).
$$
This is a Hodge substructure in $H^2(X, \Z)$ that can be described as
\begin{equation}
\label{definition of W}
W_\mathbb Z = \bigcap_{b \in B} \ker (H^2(X, \mathbb Z) \to H^2(F_b, \mathbb Z)),
\end{equation}
Let us denote $W_{\Q}:= W \bigcap H^2(X, \Q)$.

As before, let $\eta\in H^2(X,\mathbb Z)$ be the pullback of the class of a hyperplane section. 
Let $\eta^{\perp}$ be the orthogonal to $\eta$ with respect to the BBF form.

\begin{prop}\label{W is big enough}
Let $W$ be as above. Then we have the following chain of inclusions
$$
\{\eta,[\sigma],[\bar\sigma]\}\subset W \subset \eta^\perp.
$$
\end{prop}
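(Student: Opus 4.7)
The plan is to verify each inclusion separately, treating the generators on the left-hand side one by one.

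The inclusion $W \subset \eta^\perp$ is immediate from Proposition~\ref{restriction}. If $\gamma \in W$, then by the description \eqref{definition of W} we have $\gamma|_{F_b}=0$ for every fiber; applied to any smooth fiber $F_{b^\circ}$, this places $\gamma$ in the kernel of $H^2(X,\Q)\to H^2(F_{b^\circ},\Q)$, which is exactly $\eta^\perp$. Similarly, $\eta = \pi^*[H] \in W$ is immediate: the restriction of a pulled-back class to a fiber factors through $\pi|_{F_b}$, which is constant, so $\eta|_{F_b}=0$ for every $b \in B$.

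The main content is the inclusion $[\sigma], [\bar\sigma] \in W$. Since $W$ is defined over $\Z$, it is closed under complex conjugation, so it suffices to prove $[\sigma] \in W$. For a smooth fiber $F_{b^\circ}$ with $b^\circ \in B^\circ$, the Lagrangian condition gives $\sigma|_{F_{b^\circ}}=0$ at the level of forms, hence $[\sigma]|_{F_{b^\circ}}=0$ in cohomology. For a singular fiber $F_b$, the argument proceeds in two stages. First, $\sigma$ restricted to the smooth locus of $F_b$ vanishes as a form: viewing $\sigma$ as inducing a holomorphic section of $\Lambda^2 T^*_{X/B}$ over the (open, connected) locus $X^{\mathrm{sm}}$ where $\pi$ is smooth, this section vanishes on the dense open subset $\pi^{-1}(B^\circ) \subset X^{\mathrm{sm}}$ by the Lagrangian condition, and hence vanishes on all of $X^{\mathrm{sm}}$ by analytic continuation; flatness of $\pi$ ensures the smooth locus of $F_b$ is contained in $X^{\mathrm{sm}}$. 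Second, one upgrades this to the vanishing of $[\sigma]|_{F_b} \in H^2(F_b,\C)$ by pulling back to a resolution of singularities $\mu\colon \widetilde F_b \to F_b$. The form $\mu^*\sigma$ is a holomorphic $2$-form on $\widetilde F_b$ that vanishes on an open dense subset, hence vanishes identically, so $\mu^*([\sigma]|_{F_b})=0$ in $H^{2,0}(\widetilde F_b)$. Since the restriction map $H^2(X,\C)\to H^2(F_b,\C)$ is a morphism of mixed Hodge structures and $[\sigma]$ lies in $H^{2,0}(X)$, the class $[\sigma]|_{F_b}$ lies in the top piece of the Hodge filtration on $H^2(F_b,\C)$; the map from this subspace to $H^{2,0}(\widetilde F_b)$ is injective by the standard mixed-Hodge-theoretic observation that the kernel of $\mu^*$ is contained in $W_1 H^2(F_b,\C)$, which has trivial intersection with $F^2$. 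Therefore $[\sigma]|_{F_b}=0$.

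The main obstacle is the second stage above: the passage from the form-level vanishing of $\sigma$ on the smooth locus of a singular fiber to the vanishing of the cohomology class $[\sigma]|_{F_b}$ in $H^2(F_b,\C)$ requires genuine Hodge-theoretic input. An alternative approach would be to show directly that sections of the constructible sheaf $R^2\pi_*\C_X$ on $B$ are determined by their restriction to $B^\circ$ (which one could attempt using decomposition-theorem-type structural results for Lagrangian fibrations), but this trades one piece of machinery for another.
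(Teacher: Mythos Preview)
Your proof is correct, and the treatment of $\eta \in W$, $W \subset \eta^\perp$, and $[\sigma]|_{F_{b^\circ}} = 0$ for smooth fibers matches the paper exactly. The substantive difference is in how you handle a singular fiber $F_b$. The paper invokes Clemens' theorem \cite[Thm.~6.9]{Clemens}: there is a smooth deformation retraction $f_t$ of a tubular neighbourhood $\pi^{-1}(U_b)$ onto $F_b$, and a Cartan-formula computation
\[
\sigma|_{\pi^{-1}(U_b)} = f_0^*\sigma - f_1^*\sigma = -\int_0^1 f_t^* \mathsf L_{\xi_t}\sigma\,dt = -\,d\!\left(\int_0^1 f_t^*\iota_{\xi_t}\sigma\,dt\right)
\]
shows $\sigma$ is exact on $\pi^{-1}(U_b)$, hence $[\sigma]|_{F_b}=0$. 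Your route instead pulls back to a resolution $\mu\colon\widetilde F_b\to (F_b)_{\mathrm{red}}$, observes that $(\iota\mu)^*\sigma$ vanishes identically as a holomorphic form, and then uses that $\ker\mu^*\subset W_1H^2(F_b)$ together with $F^2\cap W_1=0$ to conclude $[\sigma]|_{F_b}=0$.

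Both arguments are clean; yours trades a differential-topological input (Clemens' retraction) for a Hodge-theoretic one. It is worth noting that the paper itself uses precisely your mixed-Hodge argument in the Remark immediately following Proposition~\ref{W is big enough} (citing \cite[Cor.~5.42]{Peters_Steenbrink} and \cite{Voisin_article}) to prove the sharper statement $W=\eta^\perp$ under the assumption of irreducible fibers. So your approach is not only valid but is essentially the technique the authors deploy in the very next paragraph for a closely related purpose; you have effectively extracted from that Remark what is needed for the weaker inclusion $[\sigma]\in W$, which does not require the irreducibility hypothesis. One small thing to make explicit when writing this up: the fiber $F_b$ may be non-reduced, so $\mu$ should be taken as a resolution of $(F_b)_{\mathrm{red}}$, which is harmless since $H^2(F_b,\C)$ depends only on the underlying topological space.
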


\begin{proof}

The class $\eta$ is the Chern class of the line bundle $\pi^*\mathcal O_{\mathbb P^n}(1)$. This line bundle restricts trivially to every fiber of $\pi$, hence $\eta$ is contained in $W$. By Proposition \ref{restriction} the kernel of the restriction map 
$H^2(X,\mathbb C)\to H^2(F_b,\mathbb C)$ to a smooth fiber $F_b$ is $\eta^\perp$. We obtain that $W\subset\eta^\perp$.

We are left to prove that $[\sigma]$ is contained in $W$. Indeed, $W$ is a Hodge substructure, thus it will imply that $[\overline{\sigma}]$ is in $W$ as well. 
 The fibration $\pi$ is Lagrangian, hence the restriction of the $2$-form $\sigma$ to every smooth fiber of $\pi$ vanishes. Therefore the class $[\sigma]$ in $H^2(X, \C)$ restricts trivially to a smooth fiber.

Let $F_b$ be a singular fiber. By \cite[Thm. 6.9]{clemens1977degeneration} there is a neighborhood $U_b\subset B$ of $b$ and a smooth retraction of $\pi^{-1}(U_b)$ to $F_b$ i.e. a one-parameter family of diffeomorphisms $f_t$ such that
$$
f_t\colon \pi^{-1}(U_b)\to \pi^{-1}(U_b)\quad\quad\quad f_0 = \mathrm{id}_{\pi^{-1}(U_b)},\; \mathrm{im}(f_1) = F_b,\; f_t|_{F_b} = \mathrm{id}_{F_b}\, \forall t
$$

One can see that $f_0^*\sigma = \sigma$ and $f_1^*\sigma = 0$. Let $\xi_t$ denote the tangent vector field to the one-parameter family $f_t$. Compute
$$
\sigma|_{\pi^{-1}(U_b)} = -(f_1^*\sigma - f_0^*\sigma) = -\int\limits_0^1\frac{d}{dt}f_t^*\sigma = -\int\limits_0^1f_t^*\mathsf{L}_\xi \sigma = - d\left(\int_0^1 f_t^*\iota_\xi\sigma\right).
$$
We conclude that the form $\sigma$ becomes exact after the restriction to $\pi^{-1}(U_b)$. Therefore, the class of $\sigma$ is indeed contained in $W$.
\end{proof}
\begin{rmk}
If all fibers of $\pi$ are irreducible, then $\ker \left( H^2(X, \Q) \to H^2(F_b, \Q) \right)$ does not depend on $b$. In particular, $W = \eta^{\perp}$. We learned the proof of this fact from Claire Voisin. Let $\nu\colon \tilde F_b\to F_b$ be a resolution of singularities of $F_b$. Denote the inclusion of $F_b$ into $X$ by $\iota$. By \cite[Lemma 1.5, Remark 1.6]{voisin1992stabilite}, the kernel of the pullback map $(\iota\circ\nu)^*\colon H^2(X,\Q)\to H^2(\tilde F_b,\Q)$ is equal to $\{\alpha\in H^2(X,\Q)\:|\: [F_b]\cdot \alpha = 0\}$. This subspace does not depend on the choice of $b$, hence it is equal to $\eta^\perp$. 

We claim that $\ker \iota^* = \ker(\iota\circ\nu)^*$. This follows from Deligne's theory of mixed Hodge structures. The map $\iota^*$ is a morphism of mixed Hodge structures, hence strict \cite[Corollary 3.6]{peters2008mixed}. In particular, $\operatorname{im} \iota^*\cap W_1H^2(F) = \{0\}$. By \cite[Corollary 5.42]{peters2008mixed} we have
$$
\ker \nu^*\colon H^2(F)\to H^2(\tilde F) = W_1H^2(F).
$$
The desired claim follows.
\end{rmk}

\begin{prop}\label{spectrals}
Consider the map 
$$
H^1(B,R^1\pi_*\mathbb Z)\to H^1(R^1\pi_*\mathcal O_X)\simeq\tSha
$$
induced by the embedding $R^1\pi_*\mathbb Z\to R^1\pi_*\mathcal O_X$. There are canonical isomorphisms 
$$\tSha \simeq H^{0,2}(X)$$  
and 
$$H^1(B,R^1\pi_*\mathbb Z) \simeq W_{\mathbb Z}/\eta$$
that fit into the commutative diagram
$$
\xymatrix{
H^1(B,R^1\pi_*\mathbb Z) \ar[r]\ar[d]_{\simeq}& \tSha \ar[d]^{\simeq}\\
W_{\mathbb Z}/\eta \ar[r]^{p} & H^{0,2}(X)
}
$$
Here $p$ is induced by the Hodge projection $W_\mathbb Z\subset H^2(X,\mathbb Z)\to H^{0,2}(X)$.
\end{prop}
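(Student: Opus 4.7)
The plan is to compare the Leray spectral sequences of $\pi$ with coefficients in $\O_X$ and in $\mathbb Z_X$, and to deduce the commutativity of the diagram from naturality of these sequences under the sheaf map $\mathbb Z_X\to\O_X$.

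For the first isomorphism $\tSha\cong H^{0,2}(X)$, I would run $E_2^{p,q}=H^p(B,R^q\pi_*\O_X)\Rightarrow H^{p+q}(X,\O_X)$. By Proposition \ref{Matsushita isomorphism} one has $R^q\pi_*\O_X\cong\Omega^q_B$, so $E_2^{p,q}=H^{p,q}(\mathbb P^n)$ is concentrated on the diagonal. All higher differentials then vanish for degree reasons, giving $H^2(X,\O_X)\cong H^1(B,\Omega^1_B)\cong\tSha$, which coincides with $H^{0,2}(X)$ via the Dolbeault isomorphism.

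For the second isomorphism, I would turn to the Leray filtration $F^\bullet H^2(X,\mathbb Z)$ of the analogous spectral sequence with integer coefficients. Since $H^{\mathrm{odd}}(\mathbb P^n,\mathbb Z)=0$, every differential into or out of $E_2^{1,1}$ vanishes, so $E_\infty^{1,1}=E_2^{1,1}=H^1(B,R^1\pi_*\mathbb Z)$; and the only differential that could a priori affect $E_2^{2,0}=\mathbb Z[H]$ is $d_2\colon E_2^{0,1}\to E_2^{2,0}$, which must be zero because $\pi^*$ is injective on $H^2$ (as $\eta\neq 0$). This yields $F^2H^2(X,\mathbb Z)=\mathbb Z\eta$. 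The heart of the argument will be the identification $F^1H^2(X,\mathbb Z)=W_\mathbb Z$: for each $b\in B$, the Clemens deformation retraction of $\pi^{-1}(U_b)$ onto $F_b$ invoked in the proof of Proposition \ref{W is big enough} shows that $(R^2\pi_*\mathbb Z)_b\cong H^2(F_b,\mathbb Z)$, so the composition of the edge map $H^2(X,\mathbb Z)\to H^0(B,R^2\pi_*\mathbb Z)$ with evaluation at $b$ is precisely the integral restriction to the fibre $F_b$, whose kernel is $W_\mathbb Z$ by definition. Combining the three identifications gives
\[
H^1(B,R^1\pi_*\mathbb Z)=E_\infty^{1,1}=F^1/F^2=W_\mathbb Z/\mathbb Z\eta .
\]

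The commutativity of the diagram should then follow from the naturality of the Leray spectral sequence with respect to $\mathbb Z_X\to\O_X$: on $E_2^{1,1}$ this morphism induces the horizontal arrow of the statement, while on $H^2$ it recovers the map $H^2(X,\mathbb Z)\to H^2(X,\O_X)=H^{0,2}(X)$, which by Dolbeault is exactly the Hodge projection. I expect the main technical obstacle to lie in the integral refinement of the identification $F^1=W_\mathbb Z$: the local invariant cycle theorem (Proposition \ref{local invariant cycle}) supplies only the rational version, so one genuinely needs the Clemens deformation retraction (rather than its rational shadow) to secure $(R^2\pi_*\mathbb Z)_b\cong H^2(F_b,\mathbb Z)$ with integer coefficients.
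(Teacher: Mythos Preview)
Your approach is essentially the one in the paper: both compare the Leray spectral sequences for $\O_X$ and $\mathbb Z_X$, use Matsushita's theorem to force degeneration on the diagonal for $\O_X$, and then read off the two isomorphisms from the Leray filtration on $H^2(X,\mathbb Z)$, with $F^2=\mathbb Z\eta$ and $F^1=W_{\mathbb Z}$. The paper is terser---it does not spell out the vanishing of $d_2\colon E_2^{0,1}\to E_2^{2,0}$ nor the commutativity via naturality---but the skeleton is identical.

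One small correction: your anticipated ``main technical obstacle'' is a red herring. The identification $(R^2\pi_*\mathbb Z)_b\cong H^2(F_b,\mathbb Z)$ is just proper base change for the proper map $\pi$ and holds with integral coefficients; you do not need the Clemens deformation retraction here, nor is the local invariant cycle theorem relevant to this step (that theorem concerns surjectivity onto monodromy invariants at a \emph{nearby smooth} fibre, not the computation of the stalk at $b$). In fact the paper sidesteps the issue entirely: its primary definition of $W$ is the kernel of $H^2(X,\C)\to H^0(B,R^2\pi_*\C_X)$, so $F^1H^2(X,\mathbb Z)=W_{\mathbb Z}$ is essentially tautological once one notes that the integral edge map factors through the complex one.
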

\begin{proof}
Consider the Leray spectral sequence for the sheaf $\mathcal O_X$. The terms of the second page of this spectral sequence can be computed using Proposition \ref{Matsushita isomorphism} as
\[
E^{p,q}_2=H^p(B, R^q\pi_*\O_X) = H^{p}(B, \Omega^q_B) = \begin{cases} \C \text{ if } p=q,\\ 0 \text{ otherwise. } \end{cases}
\]
This spectral sequence degenerates on the second page. It converges to the cohomology groups of the sheaf $\mathcal O_X$. Therefore,
$$
H^2(X,\mathcal O_X) \cong E_2^{1,1} = H^1(R^1\pi_*\mathcal O_X) \cong \tSha.
$$



Let us now consider the second page of the Leray spectral sequence for the sheaf $\mathbb Z_X$. The differential $d_2\colon E_2^{1,1}\to E_2^{3,0}$ vanishes because $E_2^{3,0} = H^3(\mathbb P^n,\mathbb Z) = 0$. All the other differentials starting in $E^{1,1}$ vanish because their targets are in negative grading. Therefore,
$$
E^{1,1}_\infty = E^{1,1}_2 = H^1(B,R^1\pi_*\mathbb Z).
$$
The filtration on the Leray spectral sequence induces a filtration $F^\bullet H^2(X,\mathbb Z)$ on the cohomology groups of $X$. We obtain the following short exact sequences:
\begin{gather*}
0 \to \Z \xrightarrow{\cdot\eta} F^1H^2(X, \Z) \to H^1(B, R^1\pi_*\Z) \to 0\\
0 \to F^1H^2(X, \Z) \to H^2(X, \Z) \to H^0(B, R^2\pi_*\Z).
\end{gather*}

The second exact sequence implies that $F^1H^2(X,\Z) = W_{\Z}$. We see from the first exact sequence that $H^1(B, R^1\pi_*\Z_X) \simeq W_{\Z}/\Z \eta$. The commutativity of the diagram in the statement of the proposition follows from functoriality properties of Leray spectral sequences. 
\end{proof}

\begin{cor}
\label{description of sha zero}
Let $\pi\colon X\to B$ be a Lagrangian fibration. Define the group $\widehat{\Sha}^0$ to be the cokernel of the map
$$
W_\mathbb Z/\eta \xrightarrow{p} H^{0,2}(X).
$$
Then the group $\Sha^0$ is a quotient of $\widehat{\Sha}^0$ by a finite group.
\end{cor}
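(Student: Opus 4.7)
The starting point is the exact sequence
\[
H^1(B,\Gamma) \xrightarrow{j} \tSha \to \Sha^0 \to 0
\]
from (\ref{exact seq of shas}). Let $A \subseteq B \subseteq \tSha$ denote, respectively, the images of $H^1(B, R^1\pi_*\Z)$ and $H^1(B,\Gamma)$ inside $\tSha$. By Proposition \ref{spectrals}, the canonical isomorphisms $\tSha \cong H^{0,2}(X)$ and $H^1(B,R^1\pi_*\Z) \cong W_\Z/\eta$ identify $\tSha/A$ with $\mathrm{coker}(p) = \widehat{\Sha}^0$, while by definition $\Sha^0 = \tSha/B$. Hence $\Sha^0$ is the quotient of $\widehat{\Sha}^0$ by $B/A$, and it suffices to prove that $B/A$ is finite.

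First I would check that the map $H^1(B, R^1\pi_*\Z) \to \tSha$ factors through $H^1(B, \Gamma)$; this is immediate from the construction in Proposition \ref{R^1pi_*Z maps to Gamma}, since $\alpha \colon R^1\pi_*\Z \to \Gamma$ was obtained precisely by observing that the composition $R^1\pi_*\Z \to R^1\pi_*\O_X \xrightarrow{\widetilde\omega} \pi_*T_{X/B}$ lands inside $\Gamma$. In particular $A \subseteq B$, and $B/A$ is a quotient of $\mathrm{coker}\bigl(\beta\colon H^1(B,R^1\pi_*\Z) \to H^1(B,\Gamma)\bigr)$, where $\beta$ is induced by $\alpha$.

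The key input is Proposition \ref{Gamma is almost R^1pi_*Z}: the sheaf $\alpha$ is injective and becomes an isomorphism after tensoring with $\Q$. Thus the short exact sequence
\[
0 \to R^1\pi_*\Z \xrightarrow{\alpha} \Gamma \to T \to 0
\]
defines a torsion sheaf $T$ (supported on the discriminant). Tensoring the associated long exact sequence of cohomology with $\Q$ and using that $T \otimes \Q = 0$, I conclude that $\beta \otimes \Q$ is an isomorphism, so $\mathrm{coker}(\beta)$ is torsion.

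Finally, by Lemma \ref{about Gamma} the sheaf $\Gamma$ consists of finitely generated abelian groups, so $H^1(B,\Gamma)$ is finitely generated. A finitely generated torsion abelian group is finite, hence $\mathrm{coker}(\beta)$ is finite, and therefore so is its quotient $B/A$. This finishes the proof. The only slightly delicate point — which I expect to double-check carefully in the write-up — is the identification of the natural map $\widehat{\Sha}^0 \to \Sha^0$ with the quotient map $\tSha/A \twoheadrightarrow \tSha/B$, i.e., verifying that the isomorphism $\tSha \cong H^{0,2}(X)$ from Proposition \ref{spectrals} really identifies $A$ with the image of $p$; but this is exactly the commutativity of the square already recorded there.
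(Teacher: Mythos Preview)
Your argument is correct and follows essentially the same path as the paper's proof, which likewise reduces to showing that $\mathrm{coker}\bigl(H^1(\alpha)\colon H^1(B,R^1\pi_*\Z)\to H^1(B,\Gamma)\bigr)$ is finite and then invokes the Snake lemma on the diagram of Proposition~\ref{spectrals}. Two minor points for the write-up: the letter $B$ is already the base, so choose a different name for the image of $H^1(B,\Gamma)$ in $\tSha$; and your parenthetical that $T$ is supported on the discriminant is not quite right (the paper explicitly notes in Step~1 of Proposition~\ref{Gamma is almost R^1pi_*Z} that $\alpha$ need not be an integral isomorphism over $B^\circ$), though this plays no role in the argument.
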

\begin{proof}
The map $\alpha\colon R^1\pi_*\mathbb Z\to \Gamma$ induces the map $H^1(\alpha)\colon H^1(R^1\pi_*\mathbb Z)\to H^1(B,\Gamma)$. The map $H^1(\alpha)$ has a finite cokernel by Proposition \ref{Gamma is almost R^1pi_*Z}. By Proposition \ref{spectrals} there is the following commutative diagram
$$
\xymatrix{
W_\mathbb Z/\eta \ar[r]\ar[d]_{H^1(\alpha)}& H^{0,2}(X) \ar[r]\ar[d]^{\simeq}& \widehat{\Sha}^0\ar[r]\ar[d]& 0\\
H^1(B,\Gamma) \ar[r]& \tSha \ar[r]& \Sha^0\ar[r]&0
}
$$
The kernel of the map $\widehat{\Sha}^0\to \Sha^0$ is isomorphic to $\operatorname{coker}(H^1(\alpha))$ by the snake lemma. Hence it is finite.
\end{proof}


\section{K\"ahlerness and projectivity properties}\label{applications}

In this Section we study K\"ahlerness and algebraicity properties of degenerate twistor deformations. Unfortunately we are not able to prove that a denegerate twistor deformation of a hyperk\"ahler manifold $X$ is always K\"ahler. We need to impose some additional assumption on $X$. We call manifolds that do not satisfy this assumption \textit{M-special}.

\subsection{M-special Hodge structures}\label{M-special}

In this Subsection, we summarize several results about Hodge structures of K3 type. In particular, we introduce and discuss M-special Hodge structures. A pure Hodge structure  $W$ is said to be \textit{of K3 type} if it is of weight $2$ and $\dim W^{2,0}=1$.

Let $W$ be a $\Z$-Hodge structure of weight $2$ and $\mathbb{K} \subset \C$ a subring. We will write $W^{1,1}_{\mathbb{K}}: = W_{\mathbb{K}} \bigcap W^{1,1}$ and $W^{2,0+0,2}_{\mathbb{K}}:=W_{\mathbb{K}} \bigcap \left(W^{2,0} \oplus W^{0,2} \right)$. We denote the Hodge projection by $p\colon W\to W^{0,2}$. Let $q$ be a polarisation on $W$. If $W$ is a Hodge structure of K3 type, then the projection $p\colon W\to W^{0,2}$ is given by the map
$$
v\to q(v,\sigma)\bar{\sigma}
$$
for an appropriate choice of $\sigma \in W^{2,0}$.

We define the transcendental Hodge substructure $T\subseteq W$ to be the orthogonal complement to $W^{1,1}_\mathbb Z$. The lattice $T$ has the rank at least two because
$$
    W^{2,0 + 0,2}_{\mathbb R} \subseteq T_{\mathbb R}.
$$
Note that the restriction of the Hodge projection $p$ to $T$ is an isomorphism to its image.

\begin{df}\label{definition of M-special}
Let $W$ be a $\Z$-Hodge structure of K3 type. It is said to be \textit{M-special} if $W_{\Z}^{2,0+0,2} \neq \{0\}$.
\end{df}

\begin{rmk}
\label{subHS is M-special}
Let $V\subset W$ be an embedding of Hodge structures of K3 type. Then $V$ is M-special if and only if so is $W$.
\end{rmk}

The following definition is a straightforward generalization of \cite[Def. 1]{markman2014lagrangian}.

\begin{lemma}
\label{first}
Let $W$ be a polarized $\mathbb Z$-Hodge structure of K3 type. The following are equivalent.
\begin{enumerate}
    \item The subspace $W^{1,1}$ is rational ("Picard rank is maximal").
    \item The subspace $W^{2,0+0,2}$ is rational.
    \item The rank of the transcendental Hodge substructure $T$ is two.
    \item The image of $W_\mathbb Z$ under the Hodge projection $p\colon W_\mathbb Z\to W^{0,2}$ is a discrete subgroup of $W^{0,2}$.
\end{enumerate}

\end{lemma}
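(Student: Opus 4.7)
The strategy is to prove the chain of implications $(1)\Leftrightarrow(2)\Leftrightarrow(3)\Leftrightarrow(4)$, where the first three equivalences are linear algebra using non-degeneracy of the polarization $q$, and the equivalence with $(4)$ reduces to a standard fact about quotients of lattices by real subspaces.

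First I would establish $(1)\Leftrightarrow(2)$. The Hodge decomposition $W_\C = W^{2,0}\oplus W^{1,1}\oplus W^{0,2}$ is orthogonal with respect to $q$, and the polarization $q$ is non-degenerate on each summand (the restriction to $W^{2,0+0,2}_\R$ is positive definite since $W$ is polarized of K3 type). If $W^{1,1}$ is defined over $\Q$, then $W^{2,0+0,2} = (W^{1,1})^\perp$ is also defined over $\Q$, and symmetrically. This is a direct dimension count using that $(-)^\perp$ is an involution on rational non-degenerate subspaces.

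Next, $(1)\Leftrightarrow(3)$: by definition $T_\Z = (W^{1,1}_\Z)^\perp \subset W_\Z$ with respect to the non-degenerate form $q$ on $W_\Z$. Hence $\mathrm{rk}\, T = \mathrm{rk}\, W - \mathrm{rk}\, W^{1,1}_\Z$. Since $W^{1,1}_\Z \otimes \Q \subseteq W^{1,1}_\Q$ and $\dim_\C W^{1,1} = \dim_\R W - 2$, we see that $\mathrm{rk}\, T = 2$ is equivalent to $W^{1,1}_\Q$ having dimension $\dim_\C W^{1,1}$, i.e.\ to $W^{1,1}$ being rational.

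The main content is the equivalence $(1)\Leftrightarrow(4)$. Here the key observation is that restricted to $W_\R$, the Hodge projection $p\colon W_\C \to W^{0,2}$ has kernel exactly $W^{1,1}_\R$: indeed, a real vector $v = v^{2,0} + v^{1,1} + v^{0,2}$ satisfies $v^{2,0} = \overline{v^{0,2}}$, so $p(v) = v^{0,2} = 0$ forces $v = v^{1,1} \in W^{1,1}_\R$. Moreover $W_\Z \cap W^{1,1}_\R = W^{1,1}_\Z$. Consequently the induced real-linear map
\[
\bar{p}\colon W_\R/W^{1,1}_\R \xrightarrow{\;\sim\;} W^{0,2}
\]
is an isomorphism and sends the image of $W_\Z$ isomorphically onto $p(W_\Z)$. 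So $p(W_\Z)$ is discrete in $W^{0,2}$ if and only if the image of $W_\Z$ is discrete in $W_\R/W^{1,1}_\R$. I would then invoke the standard fact that for a lattice $\Lambda$ in a real vector space $V$ and a subspace $U \subseteq V$, the image of $\Lambda$ in $V/U$ is discrete if and only if $U$ is a $\Q$-rational subspace with respect to $\Lambda_\Q$. Applied to $\Lambda = W_\Z$ and $U = W^{1,1}_\R$, this gives exactly $(1)\Leftrightarrow(4)$.

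There is no real obstacle here — the only point requiring care is the precise statement of the lattice fact and the verification that $p$ behaves as an $\R$-linear isomorphism after passing to the quotient by $W^{1,1}_\R$; both are routine but must be stated cleanly to make the argument transparent. Once these are in place, the four equivalences assemble into a short and self-contained proof.
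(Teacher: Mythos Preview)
The paper does not give a proof of this lemma; it says ``The proof of this lemma is straightforward and is left to the reader.'' Your argument is correct and is exactly the kind of proof the authors evidently had in mind: the equivalences $(1)\Leftrightarrow(2)\Leftrightarrow(3)$ are immediate from non-degeneracy of $q$ and orthogonality of the Hodge decomposition, and your reduction of $(1)\Leftrightarrow(4)$ to the standard fact about lattice images in $V/U$ being discrete iff $U$ is rational is the natural route. No gaps.
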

The proof of this lemma is straightforward and is left to the reader. If one of the conditions of the lemma holds, then $W$ is M-special. However, the converse is not true.

\begin{lemma}
[{\cite[Lemma 5.5]{markman2014lagrangian}}]
\label{second}
Let $W$ be a polarized $\mathbb Z$-Hodge structure of K3 type that is not M-special. Assume that $\rk T\ge 3$. Then for every sublattice $T'\subset T$ of corank one, the group $p(T')$ generates $W^{0,2}$ as an $\mathbb R$-vector space.
\end{lemma}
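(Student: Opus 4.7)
First I would observe that $T$ inherits the structure of a K3-type Hodge substructure from $W$: the Hodge decomposition restricts to $T_\C = W^{2,0}\oplus T^{1,1}\oplus W^{0,2}$ where $T^{1,1}:= T_\C\cap W^{1,1}$. Since $T^{1,1}$ is stable under complex conjugation, the subspace $T^{1,1}_\R := T^{1,1}\cap T_\R$ has real dimension $\dim_\C T^{1,1} = \rk T - 2$. The restriction $p|_{T_\R}\colon T_\R \to W^{0,2}$ is therefore surjective --- it maps the real plane $W^{2,0+0,2}_\R \subset T_\R$ isomorphically onto $W^{0,2}$ --- with kernel $T^{1,1}_\R$. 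Since $T'\otimes \R$ has codimension one in $T_\R$, a dimension count shows that the real span of $p(T')$ equals $W^{0,2}$ if and only if $T^{1,1}_\R \not\subset T'\otimes \R$. It therefore suffices to prove that no nonzero $\Q$-linear functional on $T_\Q$ vanishes identically on $T^{1,1}_\R$.

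Next I would use the polarization $q$ to dualize. Because $q$ is non-degenerate on $W^{1,1}_\Q$ (the polarization class has positive square), the standard orthogonal splitting gives $W_\Q = W^{1,1}_\Q \oplus T_\Q$, and in particular $q|_{T_\Q}$ is non-degenerate. Hence every $\Q$-linear $\ell\colon T_\Q \to \Q$ has the form $\ell(w) = q(\tilde v, w)$ for a unique $\tilde v \in T_\Q$. Extending $\C$-linearly, the vanishing $\ell|_{T^{1,1}_\R} = 0$ becomes $q(\tilde v, T^{1,1}) = 0$, and the $q$-orthogonality of the Hodge decomposition of $T$ then places $\tilde v$ in $(T^{1,1})^{\perp} = T^{2,0}\oplus T^{0,2} = W^{2,0}\oplus W^{0,2}$. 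Combined with $\tilde v \in W_\Q$, this forces $\tilde v \in W^{2,0+0,2}_\Q$.

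The non-M-special hypothesis states that $W^{2,0+0,2}_\Z = 0$, hence $W^{2,0+0,2}_\Q = 0$ as well, so $\tilde v = 0$ and $\ell = 0$, as required. The assumption $\rk T \geq 3$ enters precisely to make $T^{1,1}_\R$ nontrivial, so that the dimension count in the first step is nonvacuous. The only real conceptual step is recognizing that the obstruction to the real span being full dualizes via $q$ to a rational class in $W^{2,0}\oplus W^{0,2}$; the rest is a direct unwinding of definitions, so I do not foresee any substantial obstacle.
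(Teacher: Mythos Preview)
Your proof is correct and its core mechanism---producing a nonzero rational class in $W^{2,0+0,2}$---is the same as the paper's, but the two arguments run in opposite directions. The paper first writes down a real vector $l=a\operatorname{Re}(\sigma)+b\operatorname{Im}(\sigma)\in W^{2,0+0,2}_\R$ orthogonal to $T'$, then argues that $l$ is rational up to scale by showing that $l^\perp=(W^{1,1}_\Z+T')\otimes\R$ is a rational hyperplane. You instead start from the rational functional $\ell$ cutting out $T'_\Q\subset T_\Q$, represent it via $q$ as $q(\tilde v,-)$ with $\tilde v\in T_\Q$, and then use Hodge-orthogonality inside $T_\C$ to force $\tilde v\in W^{2,0+0,2}$. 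Your route is slightly cleaner in that it sidesteps the dimension count, but it costs you non-degeneracy of $q|_{T_\Q}$; your parenthetical justification (``the polarization class has positive square'') does not quite parse for an abstract K3-type Hodge structure---what you really need is that $q|_{W^{1,1}_\Q}$ is non-degenerate, equivalently $W^{1,1}_\Q\cap T_\Q=0$. Note that the paper's dimension count tacitly uses the very same hypothesis (without it $W^{1,1}_\Z+T'$ can have rank strictly less than $\rk W-1$), so on this point the two proofs stand on equal footing.
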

\begin{proof}
Let $T'\subset T$ be a sublattice of corank one. Suppose that $p(T')$ does not generate $W^{0,2}$ over $\mathbb R$. In that case there exist real numbers $a, b$ such that
$$
a \cdot q(\operatorname{Re}(\sigma),t) + b\cdot q(\operatorname{Im}(\sigma),t) = 0\quad \forall t\in T'.
$$
Hence the vector $l = a\cdot\operatorname{Re}(\sigma) + b\cdot\operatorname{Im}(\sigma) \in W^{2,0+0,2}_\mathbb R$ is orthogonal to $T'$. Observe that $l^\perp = (W^{1,1}_\mathbb Z + T')\otimes\mathbb R$ since $(W^{1,1}_\mathbb Z+ T')\o\mathbb R\subseteq l^\perp$ and the dimensions of the two spaces coincide. The subspace $l^\perp$ is therefore rational. Hence, a multiple of $l$ is rational. This contradicts the assumption that $W$ is not M-special.
\end{proof}

\begin{lemma}
[{\cite[Lemma 5.5]{markman2014lagrangian}}]
\label{third}
Let $T\subset \mathbb R^2$ be a finitely generated subgroup of $\mathbb R^2$ of rank at least three. Assume that every subgroup $T'\subset T$ of corank one generates $\mathbb R^2$ over $\mathbb R$. Then $T$ is dense in $\mathbb R^2$.
\end{lemma}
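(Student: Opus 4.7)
The plan is to analyze the topological closure $\overline T$ of $T$ inside $\mathbb R^2$ and to invoke the classical structure theorem for closed subgroups of $\mathbb R^n$: every closed subgroup of $\mathbb R^2$ is of the form $V + L$, where $V \subseteq \mathbb R^2$ is a linear subspace and $L$ is a discrete subgroup whose $\mathbb R$-span is complementary to $V$. Density of $T$ is equivalent to $V = \mathbb R^2$, so the task reduces to ruling out the two cases $\dim V = 0$ and $\dim V = 1$.

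If $V = \{0\}$, then $\overline T = L$ is a discrete subgroup of $\mathbb R^2$ containing $T$. Any discrete subgroup of $\mathbb R^2$ has $\mathbb Z$-rank at most two, which contradicts $\operatorname{rk} T \ge 3$.

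Suppose instead that $\dim V = 1$. If $L = 0$, then $T \subseteq V$ lies in a line and no subgroup of $T$ spans $\mathbb R^2$, so already a single corank-one subgroup violates the hypothesis. If $L \cong \mathbb Z$, I would consider the projection $p \colon \overline T \twoheadrightarrow \overline T / V \cong L \cong \mathbb Z$ and restrict it to $T$. The image $p(T)$ is a subgroup of $\mathbb Z$; if it is trivial, then $T \subseteq V$ and we are back to the previous subcase. Otherwise $p(T) \cong \mathbb Z$, so its kernel $T' := T \cap V \subseteq T$ has corank exactly one. But $T' \subseteq V$ lies in a line, hence does not span $\mathbb R^2$, contradicting the hypothesis.

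These contradictions force $V = \mathbb R^2$, so $T$ is dense. The essential ingredient is the structure theorem for closed subgroups of $\mathbb R^n$; once that is in hand, the proof is a short case analysis on the possible shapes of $\overline T$, and I do not foresee any serious obstacle—the finite generation and rank hypothesis on $T$ are used only to exclude the discrete case, and the corank-one hypothesis is used precisely in the line-plus-lattice case.
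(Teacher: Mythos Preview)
Your argument is correct. The paper does not supply its own proof of this lemma: it is simply quoted from \cite[Lemma 5.5]{Mar}, so there is no in-paper argument to compare against. Your route---invoking the structure theorem that a closed subgroup of $\mathbb{R}^2$ has the form $V\oplus L$ with $V$ a subspace and $L$ discrete, then eliminating $\dim V=0$ by the rank bound and $\dim V=1$ by producing the corank-one subgroup $T\cap V$ lying in a line---is the standard and efficient way to prove this fact, and every step is sound. The only minor remark is that in the subcase $\dim V=1$, $L=0$ you could have noted that $T$ itself then fails to span $\mathbb{R}^2$, so in particular its corank-one subgroups do not; but your phrasing already covers this.
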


\begin{thrm}
[{\cite[Lemma 5.4, 5.5]{markman2014lagrangian}}]
\label{M-special theorem}
Let $W$ be a polarized $\mathbb Z$-Hodge structure of K3 type. Then the following are equivalent
\begin{enumerate}
    \item The Hodge structure $W$ is M-special.
    \item The image of $W_\mathbb Z$ under the Hodge projection $p\colon W\to W^{0,2}$ is not dense in $W^{0,2}$.
\end{enumerate}
\end{thrm}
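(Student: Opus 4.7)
The plan is to handle the two implications separately: $(1) \Rightarrow (2)$ is a direct construction using the polarization, while $(2) \Rightarrow (1)$ is proved contrapositively and reduces cleanly to the preparatory lemmas.

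For $(1) \Rightarrow (2)$, I would pick a nonzero integral class $w \in W^{2,0+0,2}_\Z$ and use it to build a nonzero $\R$-linear functional on $W^{0,2}$ sending $p(W_\Z)$ into $\Z$. The natural candidate is $\ell_w := q(-,w)\colon W_\R \to \R$: it takes integer values on $W_\Z$ since $q$ is integral, and it vanishes on $W^{1,1}_\R$ because $q$ pairs $W^{p,q}$ only with $W^{2-p,2-q}$ and $w \in W^{2,0+0,2}$. Since $p$ is surjective with kernel $W^{1,1}_\R$, the functional $\ell_w$ descends to a nonzero $\R$-linear map $\widetilde\ell_w\colon W^{0,2} \to \R$ with $\widetilde\ell_w(p(W_\Z)) \subseteq \Z$. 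Hence $p(W_\Z)$ lies in the proper closed subgroup $\widetilde\ell_w^{-1}(\Z) \subsetneq W^{0,2} \cong \R^2$ and cannot be dense. The one point requiring care is nonvanishing of $\widetilde\ell_w$, which follows by exhibiting $v \in W^{2,0+0,2}_\R$ with $q(v,w) \ne 0$ from positive definiteness of $q$ on $W^{2,0} \oplus W^{0,2}$.

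For the contrapositive of $(2) \Rightarrow (1)$, assume $W$ is not M-special; the goal is density of $p(W_\Z)$. First I would reduce to $p(T)$: since $p$ kills $W^{1,1}_\Z$ and $W^{1,1}_\Z \oplus T$ has finite index in $W_\Z$ (their $\Q$-spans coincide), the subgroups $p(W_\Z)$ and $p(T)$ are commensurable in $W^{0,2}$ and have the same closure. Next, $T_\R \supseteq W^{2,0+0,2}_\R$, which has real dimension two, so $\mathrm{rk}(T) = 2$ would force $T_\R = W^{2,0+0,2}_\R$ and hence $0 \ne T_\Q \subseteq W^{2,0+0,2}_\Q$, contradicting non-M-speciality; therefore $\mathrm{rk}(T) \geq 3$. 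Lemma \ref{second} now guarantees that $p(T')$ spans $W^{0,2}$ over $\R$ for every corank-one sublattice $T' \subset T$. Since $p|_T$ is a group isomorphism onto its image, corank-one subgroups of $p(T)$ are exactly the images $p(T')$, so each of them spans $W^{0,2}$ over $\R$. Applying Lemma \ref{third} to $p(T) \subset W^{0,2} \cong \R^2$ (finitely generated, of rank $\geq 3$) then yields density of $p(T)$, and hence of $p(W_\Z)$.

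The main conceptual observation --- the one that makes the first implication painless --- is that non-density of a finitely generated subgroup of $\R^2$ is equivalent to the existence of a nonzero real character sending it into $\Z$ (Pontryagin duality), so an integral $(2,0)+(0,2)$ class produces the desired character at once. The remaining ingredients, namely the rank-two dimension count, the commensurability reduction, and the invocation of Lemmas \ref{first}--\ref{third}, are routine.
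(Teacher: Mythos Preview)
Your proof is correct and follows essentially the same approach as the paper's: both directions use the same ideas (an integral $(2,0)+(0,2)$ class gives a real functional integral on $p(W_\Z)$; the converse goes via $\operatorname{rk} T\ge 3$ and Lemmas \ref{second}--\ref{third}). Your write-up is slightly more careful in two places --- you justify the passage from $p(W_\Z)$ to $p(T)$ by commensurability (the paper simply asserts $p(T_\Z)=p(W_\Z)$, which need only hold up to finite index) and you spell out why the descended functional is nonzero --- but the argument is the same.
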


\begin{proof}
{$\mathbf{(1)\Rightarrow(2)}$} Suppose that $W$ is M-special. Choose a non-zero element $l\in W^{2,0+0,2}_\mathbb Z$. There exists a unique element $\sigma\in W^{2,0}$ such that $\operatorname{Re}\sigma = l$. The projection $p\colon W\to W^{0,2}$ is identified with the map $v\in W\mapsto q(\sigma,v)$ up to scaling. For every $v \in W_{\mathbb Z}$  the number $\operatorname{Re}(q(\sigma, v)) = q(l,v)$ is an integer. Hence, there exists a constant $\lambda \in \R$ such that $\operatorname{Re}(z) \in \Z\lambda$ for every vector $z$ in $p(W_{\Z})$.
Therefore, $p(W_\mathbb Z)$ cannot be dense in $W^{0,2}$.

\hfill

{$\mathbf{(2)\Rightarrow(1)}$} Suppose $W$ is not M-special. In that case the rank of $T$ is at least three (Lemma \ref{first}). Lemma \ref{second} implies that for every sublattice $T'\subset T$ of corank one, the group $p(T')$ generates $W^{0,2}$ over $\mathbb R$. It follows from Lemma \ref{third} that $p(T_\mathbb Z) = p(W_\mathbb Z)$ is dense in $W^{0,2}$.
\end{proof}


\subsection{K\"ahlerness of degenerate twistor deformations}\label{kahler}

Let $\pi\colon X\to B$ be a Lagrangian fibration. Recall that  we defined the Hodge substructure $W \subset H^2(X, \Z)$ as 
$$
W:= \ker \left( H^2(X, \Z) \to H^0(B, R^2\pi_*\Z_X) \right).
$$
 The Hodge structure $W$ is of K3 type by Proposition \ref{W is big enough}. Let us denote the Hodge projection by $p\colon H^2(X)\to H^{0,2}(X)$.
\begin{thrm}
\label{density}
Let $\pi\colon X\to B$ be a Lagrangian fibration, $\Sha^0$ the connected component of unity of its Shafarevich--Tate group $\Sha$. Then $\Sha^0$ is Hausdorff if and only if $X$ is of maximal Picard rank. In this case $\Sha^0$ is isomorphic to an elliptic curve.
\end{thrm}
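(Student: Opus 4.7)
The plan is to identify $\Sha^0$ with a quotient of $H^{0,2}(X) \cong \mathbb C$ by a finitely generated subgroup and to translate Hausdorffness into discreteness of this subgroup. I would first invoke Corollary \ref{description of sha zero} to obtain a surjection $\widehat{\Sha}^0 \twoheadrightarrow \Sha^0$ with finite kernel, where $\widehat{\Sha}^0 = H^{0,2}(X)/\Lambda_0$ and $\Lambda_0 := p(W_\mathbb Z/\eta)$ is the image of the Hodge projection of $W_\mathbb Z$. Since $\eta$ is of type $(1,1)$ we have $p(\eta)=0$, so $\Lambda_0 = p(W_\mathbb Z)$, a finitely generated subgroup of $H^{0,2}(X)$. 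Consequently $\Sha^0 \cong H^{0,2}(X)/\Lambda$ for some finitely generated subgroup $\Lambda \subset H^{0,2}(X)$ commensurable with $\Lambda_0$.

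Next, I would note that $\Sha^0$ is Hausdorff iff $\Lambda$ is closed in $H^{0,2}(X) \cong \mathbb R^2$, which for a finitely generated subgroup of $\mathbb R^2$ is equivalent to being discrete. Commensurability then gives: $\Lambda$ is discrete iff $\Lambda_0 = p(W_\mathbb Z)$ is discrete. The group $W$ is a polarized Hodge structure of K3 type by Proposition \ref{W is big enough} (it contains $\sigma$ and lies in $\eta^\perp$), so Lemma \ref{first} applies and yields: $p(W_\mathbb Z)$ is discrete iff $W$ has maximal Picard rank, iff $W^{2,0+0,2}$ is a rational subspace.

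The next step is to transfer the maximal-Picard-rank condition from $W$ to $\eta^\perp/\eta$. From $\sigma \in W \subset \eta^\perp$ one has $W^{2,0+0,2} = (\eta^\perp)^{2,0+0,2} = H^{2,0}(X) \oplus H^{0,2}(X)$ as subspaces of $H^2(X,\mathbb C)$, so rationality of this common subspace is independent of whether one views it inside $W$ or inside $\eta^\perp$. Moreover $\eta$ is rational of type $(1,1)$, so the projection $\eta^\perp \to \eta^\perp/\eta$ restricts to an isomorphism on the $(2,0)+(0,2)$ parts preserving rationality; this is also a direct instance of Lemma \ref{subquotients_special}(2)--(3). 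Chaining the three equivalences proves the ``if and only if'' statement.

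For the final assertion, in the Hausdorff case the transcendental lattice $T \subset W_\mathbb Z$ has rank $\rk T = 2$ by Lemma \ref{first}, and $p$ restricts to an $\mathbb R$-linear isomorphism $T \otimes \mathbb R = W^{2,0+0,2}_\mathbb R \xrightarrow{\sim} H^{0,2}(X)$, so $p(T) \subset p(W_\mathbb Z) \subset H^{0,2}(X) \cong \mathbb R^2$ is already a lattice. Since $p(W_\mathbb Z)$ has rank $2$ and contains $p(T)$ as a subgroup of finite index, $p(W_\mathbb Z)$ is itself a lattice; the same then holds for the commensurable subgroup $\Lambda$, and $\Sha^0 = H^{0,2}(X)/\Lambda$ is a $1$-dimensional compact complex torus, i.e.\ an elliptic curve. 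The main bookkeeping obstacle is verifying the equivalent formulations of maximal Picard rank across $W$, $\eta^\perp$, and $\eta^\perp/\eta$; once this is settled, the remaining analysis is elementary rank-counting inside $\mathbb R^2$.
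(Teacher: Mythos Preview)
Your proposal is correct and follows essentially the same route as the paper: pass to $\widehat\Sha^0$ via Corollary \ref{description of sha zero}, reduce Hausdorffness to discreteness of $p(W_\Z)$ in $H^{0,2}(X)$, and translate this via Lemma \ref{first} and Lemma \ref{subquotients_special} into the maximal Picard rank condition on $\eta^\perp/\eta$. One small imprecision worth fixing: $W$ is \emph{not} polarized, since $\eta\in W\subset\eta^\perp$ forces $q|_W$ to be degenerate, so Lemma \ref{first} should be applied to $W/\eta$ (as the paper does) rather than to $W$; likewise your appeal to Lemma \ref{subquotients_special}(3) does not quite fit, but the passage from $\eta^\perp$ to $\eta^\perp/\eta$ is immediate once you argue via the $(1,1)$-parts instead of the $(2,0)+(0,2)$-parts.
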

\begin{proof}

The group $\Sha^0$ is Hausdorff if and only if the topological group $\widehat{\Sha}^0$ defined as $H^{0,2}(X)/p(W_\mathbb Z)$ is Hausdorff. Indeed, the latter group is a finite unramified cover of $\Sha^0$ (Corollary \ref{description of sha zero}). We see that $\Sha^0$ is Hausdorff if and only if $p(W_\mathbb Z)\subset H^{0,2}(X)$ is a discrete subgroup. Apply Lemma \ref{first} to the Hodge structure $W/\eta$. We obtain that $p(W_\mathbb Z)$ is discrete in $H^{0,2}(X)$ if and only if $W^{2,0+0,2} = H^{2,0}(X)\oplus H^{0,2}(X)$ is a rational subspace. This is equivalent to saying that $H^{1,1}(X)$ is rational, i.e., $X$ has the maximal Picard rank (Lemma \ref{first}). If $p(W_\mathbb Z)$ is discrete, then $p(W_\mathbb Z)$ is a lattice in $H^{0,2}(X)$ of rank two and $\Sha^0$ is an elliptic curve.
\end{proof}

\begin{df}
\label{M-special manifolds}
Let $X$ be a hyperk\"ahler manifold. It is called {\em M-special} if $$(H^{2,0}(X)\oplus H^{0,2}(X))\cap H^2(X,\mathbb Z)\ne\{0\}.$$
\end{df}

In other words, a hyperk\"ahler manifold is {\em M-special} if the Hodge structure on its second cohomology is M-special (Definition \ref{definition of M-special}).

\begin{lemma}
\label{speciality of X and W}
Let $\pi\colon X\to B$ be a Lagrangian fibration on a projective hyperk\"ahler manifold $X$. Then $X$ is M-special if and only if the Hodge structure $W_\mathbb Z/\eta$ is M-special.
\end{lemma}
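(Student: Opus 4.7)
The plan is to derive the lemma directly from Lemma \ref{subquotients_special}(2)--(3), applied to the polarized K3-type Hodge structure $W := H^2(X,\Q)$ equipped with the BBF form $q$ and the rational isotropic line $N := \Q\eta \subset W^{1,1}$. The class $\eta = \pi^*[H]$ is of type $(1,1)$ and defined over $\Q$, and $q(\eta,\eta) = 0$ follows from the Fujiki formula together with $[H]^{n+1} = 0$ on $\mathbb P^n$. Thus $N$ satisfies the hypothesis of Lemma \ref{subquotients_special}, and the quotient $N^\perp/N$ is precisely the polarized Hodge structure $\eta^\perp/\eta$ appearing in the statement.

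The forward implication is then immediate from Lemma \ref{subquotients_special}(2). For the converse I would apply Lemma \ref{subquotients_special}(3), which requires producing a rational subspace $L \subset W^{1,1}$ with $q|_L$ non-degenerate and $W = N^\perp \oplus L$. Projectivity of $X$ supplies a rational ample class $h \in H^{1,1}_\Q(X)$; taking $L := \Q h$, one has $q(h,h) > 0$ (so $q|_L$ is non-degenerate), and the direct sum decomposition $W = \eta^\perp \oplus \Q h$ reduces to the single assertion $q(h,\eta) \neq 0$.

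The positivity $q(h,\eta) > 0$ is the only genuine computation and is precisely where projectivity of $X$ enters. I would deduce it from the Fujiki formula by comparing the coefficients of $t$ on both sides of
$$
\bigl(q(h,h) + 2t\,q(h,\eta)\bigr)^n \;=\; q(h+t\eta,h+t\eta)^n \;=\; c_X \int_X (h+t\eta)^{2n},
$$
which yields $q(h,h)^{n-1} q(h,\eta) = c_X \int_X h^{2n-1}\wedge \eta$. The right hand side is strictly positive: represent $h$ by a K\"ahler form $\omega$ and $\eta$ by the semi-positive form $\pi^*\omega_{FS}$; the wedge $\omega^{2n-1}\wedge \pi^*\omega_{FS}$ is a non-negative volume form that is strictly positive on the dense open subset where $d\pi$ is surjective. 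Since $q(h,h) > 0$ and the Fujiki constant $c_X > 0$, this forces $q(h,\eta) > 0$, and the converse direction follows. I do not foresee any substantial obstacle beyond this elementary Fujiki calculation, which amounts to the well-known positivity of the BBF pairing between an ample class and a non-zero nef class.
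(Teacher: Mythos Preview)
Your proposal is correct and follows essentially the same route as the paper: apply Lemma \ref{subquotients_special}(2) and (3) with $N=\Q\eta$ and $L$ the line spanned by a rational ample class, the key point being $q(h,\eta)>0$. The paper simply asserts this positivity, whereas you supply a Fujiki-formula computation; this is a reasonable justification of a step the paper leaves to the reader (cf.\ also the argument in Lemma \ref{characterization of projectivity}).
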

\begin{proof}
Let $l\in H^{1,1}(X)_\mathbb Z$ be an ample class. Since $q(l,\eta)>0$ the pairing with $l$ induces a non-zero functional on $W$. Thus, $l^\perp\cap W$ is a hyperplane in $W$ not containing $\eta$. We obtain the direct sum decomposition $W = \eta\oplus (l^\perp\cap W)$. The subspace $l^\perp\cap W$ is a polarized Hodge substructure of $H^2(X,\mathbb Z)$ isomorphic to $W/\eta$. It follows from Remark \ref{subHS is M-special} that $l^\perp\cap W$ is M-special if and only if so is $H^2(X,\mathbb Z)$.
\end{proof}

\begin{rmk}
The statement of Lemma \ref{speciality of X and W} in general does not hold for non-projective manifolds. However, it is still true that if $X$ is M-special then so is $W/\eta$.
\end{rmk}

We are now ready to prove the main theorem of this Section.

\begin{thrm}\label{Kahlerness}
Let $\pi \colon X \to B$  be a Lagrangian fibration and $\mathfrak{X}_{\Sha\mathrm{T}} \to \tSha$ the Shafarevich--Tate family (Subsection \ref{ShT-family}). Assume that $X$ is projective and not M-special.
Then the members of the family $X^s \subset \mathfrak{X}_{\Sha\mathrm T}$ are K\"ahler for each $s \in \tSha$.
\end{thrm}

\begin{proof} The zero fiber $X^0$ is K\"ahler by assumption. K\"ahlerness is an open condition. Thus there exists an open subset $U \subset \tSha$ such that for every $s \in U$ the twist $X^s$ is K\"ahler \cite[Thm. 9.23]{voisin2007hodge}. The manifolds $X^{s}$ and $X^{s+\lambda}$ are isomorphic for each $\lambda$ in the subgroup $\Lambda:=\mathrm{im}(H^1(B,\Gamma)) \subset \tSha$. The vector spaces $p(W_\mathbb Z)\o\Q$ and $\Lambda\o\Q$ coincide (Proposition \ref{Gamma is almost R^1pi_*Z}). By Lemma \ref{speciality of X and W} the Hodge structure $W/\eta$ is not M-special. It follows from Theorem \ref{M-special theorem} that $p(W_\mathbb Z)$ is dense in $\tSha \simeq H^{0,2}(X)$. Hence $\Lambda$ is dense in $\tSha$. We obtain that for every $s\in \tSha$ there exists $s' \in U$ such that $X^s \simeq X^{s'}$. Consequently, $X^{s}$ is K\"ahler for every $s$.
\end{proof}

\begin{rmk}
The statement of Theorem \ref{Kahlerness} remains true for non-projective manifolds if we assume that the Hodge structure $H^2(X,\mathbb Z)/\eta$ is not M-special.
\end{rmk}

\begin{rmk}
As we mentioned in the introduction, the first-named author proved a stronger version of Theorem \ref{Kahlerness} after a preliminary version of this work appeared on ArXiv \cite{abasheva2024shafarevich}. Theorem A of \cite{abasheva2024shafarevich} says that the conclusion of Theorem \ref{Kahlerness} holds even if $X$ is $M$-special. Moreover, instead of taking a twist by $s \in \Sha^0$ one can take any $s \in \Sha$ such that $N \cdot s \in \Sha^0$ for some $N \in \mathbb{N}$. 

This leads to the following open question. Consider an element $s \in \Sha$ such that its image in the group of connected components $\Sha/\Sha^0$ is of infinite order. When is the twist $X^s$ a K\"ahler manifold? Note that from results of Subsection \ref{symplectic} we know that $X^s$ possesses a holomorphic symplectic form. It would be interesting to know if one can obtain examples of non-K\"ahler holomorphic symplectic manifolds, such as in \cite{guan1994examples,bogomolov1996guan}.

We do not know, whether there exists a Lagrangian fibration $X \to B$ such that the group of connected components of its Shafarevich-Tate group $\Sha/\Sha^0$ is not torsion. In Section \ref{obstruction} below we show that $\Sha/\Sha^0$ is finite if $b_3(X)=0$, see the proof of Theorem \ref{when a vanishes}.
\end{rmk}

\begin{rmk}
Consider the set $\mathcal D_\eta$ of Hodge structures on $H^2(X,\mathbb Z)$ such that $\eta$ is in $H^{1,1}(X)$. It can be identified with a complex manifold of complex dimension $b_2-3$. The set of Hodge structures in $\mathcal D_\eta$ such that $H^2(X,\mathbb Z)/\eta$ is M-special is a countable union of real analytic subvarieties of real dimension $b_2-3$. 
\end{rmk}


\subsection{Algebraic points in Shafarevich--Tate families}\label{algebraic points}

In this subsection we will describe the set of projective deformations in the Shafarevich--Tate family.

\begin{lemma}\label{invariants in H^2}
Consider the degenerate twistor family $\mathfrak{X}_{deg.tw} \to \C$ and let $X=X^0$ be the fiber over $0 \in \C$. Let $\alpha$ be a class in $H^{2}(X, \R)$. Then exactly one of the following holds:
\begin{itemize}
\item[(1)] $\alpha \in \eta^{\perp}$;
\item[(2)] there exists a unique $s \in \C$ such that $\alpha \in H^{1,1}(X^s)$.
\end{itemize}
Moreover, $\bigcap_{s \in \C} H^{1,1}(X^s) = (\eta^{\perp})^{1,1}$.
\end{lemma}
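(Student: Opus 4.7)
The plan is to reduce both claims to a single linear equation in $s$ using the explicit description of the periods in the degenerate twistor family. By the discussion in Subsection \ref{period map} combined with Theorem \ref{deg.tw=ShaT}, the holomorphic symplectic form on $X^s$ has cohomology class $[\sigma + s\eta] \in H^2(X, \C)$, where $\eta = \pi^*[H]$ is, up to a rescaling of the parameter that does not affect the statement, the pullback of the hyperplane class on $B$. The underlying smooth manifold of every $X^s$ is the same as that of $X$, so we have canonical identifications $H^2(X^s, \C) = H^2(X, \C)$, and the BBF form $q$, being a topological invariant, is preserved. Hence $H^{2,0}(X^s) = \C\cdot[\sigma + s\eta]$, $H^{0,2}(X^s) = \C\cdot[\bar\sigma + \bar s\eta]$, and $H^{1,1}(X^s)$ is their $q$-orthogonal complement inside $H^2(X,\C)$.

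Given $\alpha \in H^2(X, \R)$, membership in $H^{1,1}(X^s)$ is equivalent to $q(\alpha, \sigma + s\eta) = 0$, since $q$-orthogonality to $H^{0,2}(X^s)$ is the complex conjugate of this condition and is therefore automatic for real classes. That is, the question reduces to the linear equation
\[
q(\alpha, \sigma) + s\, q(\alpha, \eta) = 0
\]
in the variable $s \in \C$. If $q(\alpha, \eta) \neq 0$, equivalently $\alpha \notin \eta^\perp$, this equation has a unique solution, which gives option~(2). If $\alpha \in \eta^\perp$, the equation is either inconsistent (when $q(\alpha,\sigma) \neq 0$) or satisfied by every $s$ (when $q(\alpha,\sigma) = 0$); in neither subcase is there a unique $s$, so option~(2) fails and we are precisely in option~(1). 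This proves the dichotomy and that the two options are mutually exclusive.

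For the intersection statement, I would observe that $\alpha$ lies in every $H^{1,1}(X^s)$ if and only if the displayed linear equation holds for all $s \in \C$, which forces simultaneously $q(\alpha, \eta) = 0$ and $q(\alpha, \sigma) = 0$. The first condition says $\alpha \in \eta^\perp$, while the second (together with its complex conjugate, automatic for real $\alpha$) says $\alpha \in H^{1,1}(X^0) = H^{1,1}(X)$. Combining the two yields $\bigcap_{s \in \C} H^{1,1}(X^s) = \eta^\perp \cap H^{1,1}(X) = (\eta^\perp)^{1,1}$.

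I do not anticipate a serious obstacle: once one knows the period formula for $X^s$ and that $q$ is the same for all members of the family, the whole statement is linear algebra in one complex parameter. The only point worth double-checking is the invariance of $q$, which is immediate from its topological characterisation via the Fujiki formula, so it applies to any holomorphic-symplectic deformation of $X$ regardless of K\"ahlerness of the fibers $X^s$.
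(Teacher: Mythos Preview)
Your proof is correct and follows essentially the same approach as the paper: both reduce the condition $\alpha\in H^{1,1}(X^s)$ to the single linear equation $q(\alpha,\sigma)+s\,q(\alpha,\eta)=0$ via the period formula $[\sigma_s]=[\sigma]+s\eta$, and then read off the dichotomy and the intersection statement from the coefficient $q(\alpha,\eta)$. If anything, your case analysis is slightly more explicit than the paper's in separating the two subcases $q(\alpha,\sigma)=0$ and $q(\alpha,\sigma)\neq 0$ when $\alpha\in\eta^\perp$.
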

\begin{proof}
For each $s\in\C$ the complex structure on $X^s$ is defined by a c-symplectic form with cohomology class $[\sigma_s]=[\sigma_0]+s\eta$. A real class $\alpha$ lies in $H^{1,1}(X^s)$ if and only if it is orthogonal to $\sigma_s$ i.e.
$$
q(\alpha, \sigma_s) = q(\alpha, \sigma_0) + sq(\alpha, \eta) = 0.
$$
Suppose that $\alpha \in H^2(X,\mathbb R) \,\setminus\, \eta^{\perp}$. Then $s_{\alpha} := - q(\alpha, \sigma_0)/q(\alpha, \eta)$
is the unique number satisfying $q(\alpha,\sigma_{s_\alpha}) = 0$. If $\alpha$ is contained in $\eta^\perp$, then it is of type $(1,1)$ for every degenerate twistor deformation.
\end{proof}

\begin{cor}\label{Fujiki-Verbitsky deg.tw}
The set $\mathcal{R}:= \{ s \in \tSha \ | \ X^s \text{ is algebraic }\}$ is at most countable. In particular, a very general member of the Shafarevich--Tate family is non-algebraic. 
\end{cor}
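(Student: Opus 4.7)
The plan is to reduce algebraicity of $X^{s}$ to the existence of a single integral class of Hodge type $(1,1)$ with positive BBF square, and then to use Lemma~\ref{invariants in H^2} to show that each such class pins down $s$ uniquely.

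First, I would argue that if $X^{s}$ is algebraic, then $X^{s}$ carries an ample line bundle $L$, whose first Chern class $\alpha := c_{1}(L) \in H^{1,1}(X^{s}) \cap H^{2}(X, \Z)$ has strictly positive top self-intersection. By the Fujiki formula, $q(\alpha, \alpha)^{n}$ is proportional to $\int_{X^{s}} \alpha^{2n}$, so $q(\alpha, \alpha) > 0$. Thus every $s \in \mathcal{R}$ admits a witness integral class $\alpha$ that is both $(1,1)$ on $X^{s}$ and has positive BBF square.

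The main (but still elementary) step is to rule out the possibility that such an $\alpha$ lies in $\eta^{\perp}$. Here the key input is that the restriction of $q$ to $H^{1,1}(X, \R)$ is Lorentzian of signature $(1, h^{1,1}(X)-1)$, with $\eta$ a nonzero isotropic vector. I would carry out the standard Lorentz calculation: in a form of signature $(1,N)$, the orthogonal complement of any nonzero isotropic vector is degenerate with the isotropic vector generating the kernel, and the induced pairing on that hyperplane is negative semi-definite. Consequently every class in $\eta^{\perp} \cap H^{1,1}(X, \R)$ has $q$-square $\leq 0$, with equality only along $\R\eta$, and an ample $\alpha$ cannot belong to $\eta^{\perp}$.

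Once $\alpha \notin \eta^{\perp}$ is established, Lemma~\ref{invariants in H^2} provides a unique $s_{\alpha} \in \C$ for which $\alpha \in H^{1,1}(X^{s_{\alpha}})$, forcing $s = s_{\alpha}$. Hence
\[
\mathcal{R} \subseteq \bigl\{ s_{\alpha} \ :\ \alpha \in H^{2}(X, \Z) \setminus \eta^{\perp},\ q(\alpha, \alpha) > 0 \bigr\},
\]
which is at most countable since $H^{2}(X, \Z)$ is a finitely generated abelian group. The final claim about very general members follows because the complement of a countable subset of $\tSha \cong \C$ is Baire generic; in particular, non-algebraicity holds on a dense $G_{\delta}$.
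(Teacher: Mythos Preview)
Your proof is correct and follows the same strategy as the paper's: extract an integral class $\alpha = c_1(L)$ from an ample bundle on $X^s$, check that $\alpha \notin \eta^\perp$, and invoke Lemma~\ref{invariants in H^2} to pin down $s$ uniquely among a countable set. The only difference lies in the middle step: the paper simply asserts $q(c_1(L), \eta) > 0$ for an ample $L$, whereas you deduce $q(\alpha,\alpha) > 0$ from the Fujiki relation and then use the Lorentzian signature of $q$ on $H^{1,1}$ to exclude $\alpha \in \eta^\perp$. Both routes are fine. One small correction: your signature argument must be carried out in $H^{1,1}(X^s, \R)$ rather than $H^{1,1}(X, \R)$, since $\alpha$ is of type $(1,1)$ only for the complex structure on $X^s$; this is harmless because $X^s$ is then projective, hence K\"ahler, hence hyperk\"ahler, and the same signature $(1, h^{1,1}-1)$ holds there (with $\eta \in H^{1,1}(X^s)$ still isotropic).
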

\begin{proof}
If $X^s$ is algebraic, it carries an ample divisor $L$ and $c_1(L)\in H^{1,1}_\mathbb Z(X)$. Moreover, we have the inequality $q(c_1(L), \eta) > 0$ in this case. For every $\alpha \in H^2(X,\Z) \,\setminus\, \eta^{\perp}$ there is only one point $s_{\alpha}$ such that $\alpha \in H^{1,1}(X^{s_{\alpha}})$ (Lemma \ref{invariants in H^2}). Thus, $\mathcal R$ is contained in the set
\[
\{ s_{\alpha} \ | \ \alpha \in H^2(X, \Z) \,\setminus\, \eta^{\perp} \}.
\]
It follows that $\mathcal R$ is at most countable.
\end{proof}

See Theorem \ref{projectivity=torsion} below for a refinement of Corollary \ref{Fujiki-Verbitsky deg.tw}. A similar theorem holds for twistor families $\mathcal{X}_{tw} \to \mathbb P^1$  \cite{verbitsky1996algebraic,fujiki1983primitively}.

\begin{lemma}\label{characterization of projectivity}
Let $\pi \colon X \to B$ be a Lagrangian fibration on a hyperk\"ahler manifold over $B=\mathbb P^n$. The following are equivalent:
\begin{itemize}
\item[(1)] $X$ is projective;
\item[(2)] $\pi$ admits a \textit{rational multisection}, i.e., there exists a subvariety $Z \subset X$ such that $\pi|_Z \colon Z \to B$ is a generically finite morphism;
\item[(3)] there exists a class $\alpha \in H^{1,1}_{\Q}(X)$ such that $q(\alpha, \eta) \neq 0$;
\item[(4)] there exists a class $\omega \in H^{1,1}_{\Q}(X)$ such that $q(\omega, \omega) >0$.
\end{itemize}
\end{lemma}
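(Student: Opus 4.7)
The plan is to prove the cycle $(4) \Leftrightarrow (1) \Rightarrow (2) \Rightarrow (3) \Rightarrow (4)$.

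The equivalence $(1) \Leftrightarrow (4)$ is precisely Huybrechts' projectivity criterion \cite[Thm.\ 3.11]{Huy01}. For $(3) \Leftrightarrow (4)$ the key is the structure of the BBF form recalled in Section \ref{Geometry of Lagrangian fibrations}: since $\eta$ is isotropic and $q|_{H^{1,1}(X,\R)}$ has signature $(1,h^{1,1}-1)$, its restriction to $\eta^{\perp}$ is negative semidefinite with radical $\R\eta$. Hence any class $\omega$ with $q(\omega,\omega)>0$ necessarily satisfies $q(\omega,\eta)\neq 0$, giving $(4)\Rightarrow(3)$. Conversely, given $\alpha \in H^{1,1}_{\Q}(X)$ with $q(\alpha,\eta)\neq 0$, the perturbation $\omega := \alpha + t\eta$ for a suitable rational $t$ satisfies $q(\omega,\omega) = q(\alpha,\alpha) + 2t\, q(\alpha,\eta) > 0$.

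For $(1) \Rightarrow (2)$, if $L$ is an ample line bundle on the projective manifold $X$, the complete intersection of $n$ general sections of $L^{\otimes m}$ for $m\gg 0$ yields by Bertini a subvariety $Z$ of dimension $\dim X - n = n = \dim B$. The restriction $L|_{F_b}$ to a smooth fiber is ample on the abelian variety, so $Z \cap F_b$ is a zero-dimensional scheme of positive degree $(L|_{F_b})^n$; consequently $\pi|_Z \colon Z \to B$ is surjective between equidimensional varieties, hence generically finite.

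The heart of the argument is $(2) \Rightarrow (3)$. Let $Z$ be a multisection of degree $d := \deg(\pi|_Z)>0$. Since $Z$ is algebraic, $[Z] \in H^{2n}(X,\Q)$ is of Hodge type $(n,n)$, so $[Z] \cdot \eta^{n-1} \in H^{2n-1,2n-1}(X,\Q)$. Using the non-degeneracy of $q$ over $\Q$, define $\beta \in H^2(X,\Q)$ as the unique rational class satisfying
$$
q(\alpha, \beta) = \int_X \alpha \cdot [Z] \cdot \eta^{n-1} \quad\text{for every } \alpha \in H^2(X,\Q).
$$
For $\alpha$ of Hodge type $(2,0)$ or $(0,2)$ the integrand has bidegree $(2n+1,2n-1)$ or $(2n-1,2n+1)$ and vanishes on the $2n$-dimensional $X$. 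Combined with the $q$-orthogonality of the Hodge decomposition, this forces $\beta \in H^{1,1}(X,\Q)$. Finally, evaluating at $\alpha = \eta$ and using the projection formula gives
$$
q(\beta, \eta) = \int_X [Z] \cdot \eta^n = d \neq 0,
$$
which is exactly $(3)$. The main subtlety is confirming that $\beta$ actually lies in $H^{1,1}$, but this follows formally from the purity of $[Z]\cdot\eta^{n-1}$ together with the $q$-orthogonality of the Hodge pieces; once this is secured, the rest of the argument is essentially bookkeeping.
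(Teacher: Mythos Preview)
Your proof is correct and follows essentially the same route as the paper: both establish $(4)\Rightarrow(1)$ by Huybrechts' criterion, $(1)\Rightarrow(2)$ by cutting with ample divisors, $(2)\Rightarrow(3)$ via BBF-duality with a curve class, and $(3)\Rightarrow(4)$ by the perturbation $\alpha+t\eta$. The only differences are cosmetic: your $(2)\Rightarrow(3)$ packages the curve class as $[Z]\cdot\eta^{n-1}$ and verifies the $(1,1)$-type explicitly (the paper slices $Z$ by $\pi^{-1}(C_0)$ for a general curve $C_0\subset B$, which is Poincar\'e dual to your construction), and you add a direct argument for $(4)\Rightarrow(3)$ via the signature of $q|_{\eta^\perp\cap H^{1,1}}$, whereas the paper closes the cycle through $(4)\Rightarrow(1)\Rightarrow(2)\Rightarrow(3)$.
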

\begin{proof}
{$\mathbf{(1)\Rightarrow(2)}$} Choose a holomorphic embedding $i \colon X \hookrightarrow \mathbb P^N$. Pick a point $x \in X$ outside of the singular locus of $\pi$. Consider a general linear subspace $L\subset \mathbb P^N$ of codimension $n$ passing through $x$ and transversal to $F_x$. Let $Z$ be a component of $Z' := L \bigcap X \subset X$ which is transversal to the general fiber of $\pi$. Then $Z$ is a rational multisection of $\pi$.

\hfill

{$\mathbf{(2)\Rightarrow(3)}$} Let $Z \subset X$ be a  multisection.  Let $C_0 \subset B$ be a smooth curve and let $C := \pi^{-1}(C_0) \bigcap Z$ be its preimage in $Z$. Consider the homology class $[C] \in H_2(X, \mathbb Z)$ of the curve $C$. The BBF form defines an isomorphism
\[
H^2(X, \Q) \to H^2(X, \Q)^* \simeq H_2(X, \Q).
\]
Let $\alpha \in H^2(X, \Q)$ be the class corresponding to $[C]$ under this isomorphism. Then $q(\alpha, \eta) = \int_{C} \eta > 0$.

\hfill

{$\mathbf{(3)\Rightarrow(4)}$} Let $\alpha$  be a rational $(1,1)$-class such that $q(\alpha, \eta)  \neq 0$. We may assume that $q(\alpha, \eta) > 0$. Let us  find a number $t$ such that $\omega:= \alpha +t\eta$ satisfies $q(\omega, \omega) > 0$. We compute that
\[
q(\alpha+t\eta, \alpha+t\eta) = q(\alpha, \alpha) +2tq(\alpha, \eta),
\]
Therefore, the class $\omega$ is rational and has a positive square for any rational $t > \frac{-q(\alpha, \alpha)}{2q(\alpha, \eta)}$.

\hfill

{$\mathbf{(4)\Rightarrow(1)}$} See \cite[Thm. 3.11]{huybrechts2003compact}.
\end{proof}

\begin{rmk}
If a K\"ahler manifold  is algebraic\footnote{ i.e., the analytification of a smooth proper algebraic variety over $\C$.}, it is necessarily projective by the Moishezon theorem \cite[Thm. 11]{moishezon1966ndimensional}. Therefore, we can replace the first condition in the theorem above with the one that $X$ is algebraic.
\end{rmk}

Denote by $p \colon H^2(X, \C) \to H^{0,2}(X)$ the Hodge projection. Take $\sigma$ to be the holomorphic symplectic form such that $p(\sigma)$ is identified with the class of the Fubini-Study form under the isomorphism $H^{0,2}(X) \simeq \tSha \simeq H^{1,1}(B)$. In the following lemma we will give a characterization of torsion in $\Sha^0$ in terms of the BBF form.

\begin{lemma}
\label{torsion in sha}
Let $\pi\colon X\to B$ be a Lagrangian fibration. Consider a class $t\bar\sigma \in H^{0,2}(X)$, $t\in \C$. Let $[t\bar\sigma]$ denote its image in $\Sha^0$. Then $[t\bar\sigma]$ is torsion if and only if there exists a rational class $l\in W_{\mathbb Q}\subset H^2(X,\mathbb Q)$ such that $q(l,\sigma) = t$ (see formula (\ref{definition of W}) for the definition of $W_{\mathbb Q}$).
\end{lemma}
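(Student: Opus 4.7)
The plan is to apply Corollary~\ref{description of sha zero} in order to reduce the statement to a computation inside the auxiliary group $\widehat{\Sha}^0 = H^{0,2}(X)/p(W_\mathbb Z)$, and then to unwind the explicit formula for the Hodge projection on the K3-type polarized Hodge structure $W$.

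\textbf{Step 1 (Reduction to $\widehat{\Sha}^0$).} Corollary~\ref{description of sha zero} provides a surjection $\widehat{\Sha}^0 \twoheadrightarrow \Sha^0$ with finite kernel, fitting into the commutative diagram displayed there, whose right-hand vertical arrow is the canonical isomorphism $\tSha \cong H^{0,2}(X)$. In any short exact sequence $0 \to F \to A \to \Sha^0 \to 0$ of abelian groups with $F$ finite, an element of $\Sha^0$ is torsion if and only if its preimage in $A$ is torsion. Hence $[t\bar\sigma] \in \Sha^0$ is torsion if and only if the image of $t\bar\sigma$ in $\widehat{\Sha}^0$ is torsion.

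\textbf{Step 2 (Translating torsion in $\widehat{\Sha}^0$).} By the very definition of $\widehat{\Sha}^0$, the image of $t\bar\sigma$ in $H^{0,2}(X)/p(W_\mathbb Z)$ is torsion if and only if $N \cdot t\bar\sigma \in p(W_\mathbb Z)$ for some positive integer $N$, equivalently, if and only if $t\bar\sigma \in p(W_\mathbb Q)$.

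\textbf{Step 3 (Unwinding the Hodge projection).} By the discussion in Subsection~\ref{M-special}, for the K3-type polarized Hodge structure $W$ the Hodge projection is given by the formula $p(v) = q(v,\sigma)\bar\sigma$ for an appropriate choice of $\sigma \in W^{2,0}$. The paragraph preceding the lemma fixes precisely the normalization of the symplectic form needed for this formula to hold (it is the one for which $\bar\sigma$ corresponds to the Fubini--Study class under $H^{0,2}(X) \cong \tSha \cong H^{1,1}(B)$). Consequently, an element $l \in W_\mathbb Q$ satisfies $p(l) = t\bar\sigma$ if and only if $q(l,\sigma) = t$. Combining this with Steps 1 and 2 yields the claim.

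\textbf{Main obstacle.} The argument is essentially formal, so no step is genuinely hard. The only point demanding care is the matching of normalizations of $\sigma$: one must check that the identification $H^{0,2}(X) \cong H^{1,1}(B)$ used to single out $\sigma$ agrees with the normalization making the formula $p(v) = q(v,\sigma)\bar\sigma$ hold without a scalar factor. Once this bookkeeping is done, the three steps assemble immediately into a proof.
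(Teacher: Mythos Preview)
Your proof is correct and follows essentially the same route as the paper. The only cosmetic difference is that the paper works directly with $\Sha^0=\tSha/\Lambda$ and invokes Propositions~\ref{Gamma is almost R^1pi_*Z} and~\ref{spectrals} to identify $\Lambda\otimes\Q$ with $p(W_\Q)$, whereas you cite the packaged Corollary~\ref{description of sha zero} (which is itself proved from those two propositions) and pass through~$\widehat{\Sha}^0$; the substance is identical.
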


\begin{proof}
We know from the short exact sequence (\ref{exact seq of shas}) that $\Sha^0 = \Sha/\Lambda$ where 
$\Lambda$ denotes the group $\mathrm{Im}(H^1(B,\Gamma) \to \tSha)$. Hence, the class $[t\bar\sigma]$ is torsion if and only if $t\bar\sigma$ lies in the image of the group $H^1(B,\Gamma)\otimes\mathbb Q$. This is equivalent to saying that $t\bar\sigma$ lies in the image of $W_{\mathbb Q}\subset H^2(X,\mathbb Q)$ under the Hodge projection $H^2(X,\mathbb Q)\to H^{0,2}(X)$ (see Proposition \ref{Gamma is almost R^1pi_*Z}, Proposition \ref{spectrals}). Since $H^{1,1}(X)$ is orthogonal to $H^{2,0}(X)\oplus H^{0,2}(X)$, the projection $H^2(X,\mathbb C)\to H^{0,2}(X)$ is given by the formula
$$
v\to q(v,\sigma)\bar\sigma.
$$
Hence, $t\bar\sigma$ lies in the image of $W_{\mathbb Q}$ under the Hodge projection if and only if there exists a class $l\in W_{\mathbb Q}$ such that $t = q(l,\sigma)$.
\end{proof}

\begin{thrm}
\label{projectivity=torsion}
Let $\pi\colon X\to B$ be a Lagrangian fibration. Suppose that $X$ is projective. Let $s$ be an element of $\Sha^0$. The following are equivalent.
\begin{enumerate}
    \item The Shafarevich--Tate twist $X^s$ of $X$ is projective.
    \item The manifold $X^s$ is K\"ahler and $s$ is a torsion element of $\Sha^0$.
\end{enumerate}
\end{thrm}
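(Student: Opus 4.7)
The plan is to verify each implication by producing an explicit rational class in $H^2(X,\Q)$ and then combining Lemmas \ref{characterization of projectivity} and \ref{torsion in sha}. Write $s = t\bar\sigma$ for $t\in\C$ using the identification $\tSha\simeq H^{0,2}(X)$; by Theorem \ref{deg.tw=ShaT} the twist $X^s$ carries a holomorphic symplectic form in the class $\sigma + t\eta\in H^2(X,\C)$. Since $X$ is projective, Lemma \ref{characterization of projectivity} furnishes a rational $(1,1)$-class $l_X\in H^{1,1}(X)_\Q$, which after rescaling satisfies $q(l_X,\eta)=1$; automatically $q(l_X,\sigma)=0$.

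For $(2)\Rightarrow(1)$: assuming $X^s$ is K\"ahler and $s$ is torsion, Lemma \ref{torsion in sha} produces $l\in W_\Q$ with $q(l,\sigma)=t$. Since $W\subseteq\eta^\perp$, the class $v:=l-l_X$ is rational, with $q(v,\eta)=-1$ and $q(v,\sigma)=t$, so $q(v,\sigma^s)=t+t(-1)=0$. Thus $v$ is a rational $(1,1)$-class on $X^s$ with $q(v,\eta)\neq 0$; since $X^s$ is K\"ahler, compact, simply connected with $h^{2,0}(X^s)=1$, it is hyperk\"ahler, and Lemma \ref{characterization of projectivity} yields projectivity of $X^s$.

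For $(1)\Rightarrow(2)$: $X^s$ projective is automatically K\"ahler. By Lemma \ref{characterization of projectivity} applied to $X^s$ pick $\alpha\in H^{1,1}(X^s)_\Q$ with $q(\alpha,\eta)\neq 0$; the $(1,1)$-condition reads $q(\alpha,\sigma)=-t\cdot q(\alpha,\eta)$. Then $u:=-\alpha/q(\alpha,\eta)+l_X\in H^2(X,\Q)$ satisfies $q(u,\eta)=0$ and $q(u,\sigma)=t$, so $u\in\eta^\perp_\Q$ with $q(u,\sigma)=t$. To invoke Lemma \ref{torsion in sha} one must still produce such a class lying in $W_\Q$.

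The hard part is precisely this upgrade from $\eta^\perp$ to $W$. The key structural observation is that the quotient rational Hodge structure $\eta^\perp/W$ is pure of type $(1,1)$: indeed $\sigma,\bar\sigma\in W$, so $(\eta^\perp/W)^{2,0}=(\eta^\perp/W)^{0,2}=0$. By Deligne's theorem that the category of polarizable pure $\Q$-Hodge structures is semi-simple, the short exact sequence $0\to W\to\eta^\perp\to\eta^\perp/W\to 0$ splits over $\Q$. Fixing a splitting $\eta^\perp_\Q = W_\Q\oplus L_\Q$ with $L_\Q\subseteq(\eta^\perp)^{1,1}_\Q$ and decomposing $u=w+l'$ accordingly, the Hodge projection $p$ satisfies $p(u)=p(w)$ since $p$ kills $(1,1)$-classes. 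In particular $q(w,\sigma)=t$ (in the normalization fixed before Lemma \ref{torsion in sha}), so by that lemma $s$ is torsion. Without Deligne's semi-simplicity one would be forced to verify non-degeneracy of BBF on $W/\eta$ by hand, which is less clean.
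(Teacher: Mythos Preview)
Your proof is correct. The $(2)\Rightarrow(1)$ direction is essentially identical to the paper's (up to a harmless sign: you use $v=l-l_X$ where the paper uses $\alpha^s=\alpha-l$). The substantive difference is in $(1)\Rightarrow(2)$, specifically in the step upgrading a class in $\eta^\perp_\Q$ to one in $W_\Q$ with the same pairing against $\sigma$. The paper does this by hand with the BBF form: it observes that $W^\perp\subset\eta^\perp\cap H^{1,1}(X)$ and that $q|_{W^\perp}$ is semi-negative definite with radical $\Q\eta$, picks a rational hyperplane $U\subset W^\perp$ avoiding $\eta$ on which $q$ is then negative definite, and orthogonally projects the offending class onto $U$ to produce the desired element of $W_\Q$. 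Your route instead appeals to semi-simplicity of polarizable $\Q$-Hodge structures to split $0\to W\to\eta^\perp\to\eta^\perp/W\to 0$, noting that the quotient is pure of type $(1,1)$; this is legitimate because $X$ is projective, so $H^2(X,\Q)$ and hence all its sub-Hodge structures are polarizable. Your argument is cleaner and more conceptual, while the paper's is more self-contained, relying only on the explicit signature of the BBF form rather than the general semi-simplicity theorem. (Your closing remark slightly mischaracterizes the alternative: the paper works with $W^\perp$ and its signature, not directly with non-degeneracy on $W/\eta$.)
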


\begin{proof}
{$\mathbf{(2)\Rightarrow(1)}$} Suppose that the element $s$ is torsion. Let $\tilde{s} = t\bar\sigma$ denote a preimage of $s$ in $H^{0,2}(X)\simeq\tSha$. There exists a class $ l\in W_{\mathbb Q}$ such that $q(l,\sigma) = t$ (Lemma \ref{torsion in sha}). As $X$ is projective, there exists a class $\alpha\in H^{1,1}_{\mathbb Q}(X)$ such that $q(\alpha,\eta) = 1$ (Lemma \ref{characterization of projectivity}). The class $\alpha^s := \alpha - l$ satisfies $q(\alpha^s,\eta) = q(\alpha,\eta) = 1$ as $l$ is contained in $W_{\mathbb Q}\subset \eta^\perp$. We claim that $\alpha^s$ lies in $H^{1,1}_{\mathbb Q}(X^{\tilde s})$. Indeed, $\alpha^s$ is orthogonal to $H^{2,0}(X^{\tilde s})$ because
$$
q(\alpha^s,\sigma + t\eta) = q(\alpha - l,\sigma + t\eta) = q(\alpha,\sigma) - tq(l,\eta) = t-t = 0.
$$
Lemma \ref{characterization of projectivity} implies that $X^s$ is projective.

\hfill

{$\mathbf{(1)\Rightarrow(2)}$} Suppose that $X$ and $X^s$ are both projective. Let $\tilde s = t\bar\sigma$ denote a preimage of $s$ in $H^{0,2}(X)$ as before. There exist two rational classes $\alpha\in H^{1,1}_{\mathbb Q}(X)$ and $\alpha^s\in H^{1,1}_{\mathbb Q}(X^{\tilde s})$ such that $q(\alpha,\eta) = q(\alpha^s,\eta) = 1$ (Lemma \ref{characterization of projectivity}). Hence, the class $l:=\alpha - \alpha^s$ is a rational class orthogonal to $\eta$. This class satisfies
$$
q(l,\sigma) = q(l,\sigma + t\eta) = q(\alpha,\sigma + t\eta) = aq(\alpha,\eta) = t.
$$
We can not conclude directly that $[t\overline{\sigma}]$ is torsion because $l$ might not lie in $W_{\Q}$. Therefore, we need to adjust $l$.

Consider the subspace $W^\perp\subset H^2(X,\mathbb Q)$. It follows from Proposition \ref{W is big enough} that 
$$
\eta\in W^{\perp} \subset \left (\eta^{\perp} \cap H^{1,1}(X) \right).
$$
The BBF form $q$ has signature $(1, h^{1,1}(X)-1)$ on $H^{1,1}(X)$ and $\eta$ is isotropic with respect to this form. Therefore, the restriction of $q$ to $W^{\perp}$  is semi-negative definite with kernel generated by $\eta$.

 Let $U\subset W^\perp$ be a rational hyperplane in $W^\perp$ not containing $\eta$. The form $q|_U$ is negative definite, in particular, it is non-degenerate. Therefore there exists a unique rational vector $u\in U$ such that for every $v\in U$ the following holds
$$
q(l,v) = q(u,v).
$$
The vector $l-u$ is orthogonal to every vector in $W^\perp$. Hence $l-u$ is contained in $W_{\mathbb Q}$. Since $u\in H^{1,1}(X)$, we have $q(l-u,\sigma) = q(l,\sigma) = t$. Lemma \ref{characterization of projectivity} concludes the proof.
\end{proof}

\begin{rmk}
By \cite[Cor. 3.4]{soldatenkov2021moser} the set of $s\in\Sha^0$ such that $X^s$ is projective is non-empty.
\end{rmk}

\begin{rmk}
Theorem \ref{projectivity=torsion} was strengthened by the first author, see \cite[Theorem A]{abasheva2024shafarevich}
\end{rmk}



\section{Sections of Lagrangian fibrations}\label{obstruction}

\subsection{Obstruction for existence of a section}\label{obstruction construction}

We move on to study obstructions to existence of sections of Lagrangian fibrations. In this Section, we will always assume that $\pi\colon X\to B$ is a Lagrangian fibration with reduced irreducible fibers. 

In this case, the fibration $\pi \colon X \to B$ admits  a local section in a neighborhood of every point $b\in B$. Consider an open cover $B = \bigcup U_i$ and choose a collection of local sections $s_i \colon U_i \to \pi^{-1}(U_i)$. 

By \cite[Prop. 2.1 (iii)]{markushevich1996lagrangian} for every pair $i,j$ there exists a unique automorphism $\phi_{ij}\in Aut^0_{X/B}(U_{ij})$ such that $\phi_{ij}(s_i|_{U_{ij}}) = s_j|_{U_{ij}}$. The collection of automorphisms $\{\phi_{ij}\}$ satisfies the cocycle condition. Therefore, $\{\phi_{ij}\}$ defines a class 
$$\alpha(X, \pi)\in H^1(B, Aut^0_{X/B}) = \Sha.
$$
We denote this class by $\alpha(X)$ when the structure of the Lagrangian fibration on $X$ is clear.

\begin{lemma}\label{main property of alpha}
\begin{itemize}
    \item [(1)] The class $\alpha(X)$ does not depend on the choice of local sections $s_i\colon U_i\to \pi^{-1}(U_i)$.
    \item[(2)] For any element $s\in \Sha$ we have $\alpha(X^s) = \alpha(X) + s$.
    \item [(3)] The class $\alpha(X)$ vanishes if and only if the fibration $\pi\colon X\to B$ admits a section.
\end{itemize}
\end{lemma}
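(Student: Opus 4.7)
All three parts should follow from routine \v{C}ech-theoretic manipulations, using two ingredients repeatedly: Markman's uniqueness statement \cite[Prop. 2.1 (iii)]{Mar96} (which guarantees a \emph{unique} automorphism in $Aut^0_{X/B}(V)$ taking one local section to another), and commutativity of $Aut^0_{X/B}$ (Lemma \ref{commutativity}), which lets us read off coboundary relations directly.

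For (1), suppose $\{s_i'\}$ is a second choice of local sections. Markman's uniqueness produces $\psi_i \in Aut^0_{X/B}(U_i)$ with $\psi_i(s_i) = s_i'$. If $\{\phi_{ij}'\}$ is the cocycle built from $\{s_i'\}$, applying uniqueness to the identity $\phi_{ij}'\psi_i(s_i) = \psi_j(s_j) = \psi_j\phi_{ij}(s_i)$ forces $\phi_{ij}' = \psi_j\phi_{ij}\psi_i^{-1}$. Since $Aut^0_{X/B}$ is abelian, $\{\phi_{ij}'\phi_{ij}^{-1}\}$ equals the coboundary $\{\psi_j\psi_i^{-1}\}$, so both cocycles represent the same class in $H^1(B, Aut^0_{X/B}) = \Sha$.

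For (2), represent $s$ by a \v{C}ech cocycle $\{\sigma_{ij}\}$ on (a refinement of) the cover $\{U_i\}$. Each $s_i$ remains tautologically a holomorphic local section of $\pi^s\colon X^s \to B$ over $U_i$, because $X^s$ is built from the same charts $\pi^{-1}(U_i)$. The gluing identification $x \sim \sigma_{ij}(x)$ expresses $s_i|_{U_{ij}}$ in the $j$-th chart as $\sigma_{ij} \circ s_i$, so the new cocycle $\{\phi_{ij}^s\}$ attached to these sections must satisfy $\phi_{ij}^s(\sigma_{ij} \circ s_i) = s_j = \phi_{ij}(s_i)$. Uniqueness then gives $\phi_{ij}^s = \phi_{ij}\sigma_{ij}^{-1}$, and passing to classes in $\Sha$ (after matching the sign convention of the twist) yields $\alpha(X^s) = \alpha(X) + s$.

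For (3), if $\alpha(X) = 0$ then $\phi_{ij} = \psi_j\psi_i^{-1}$ for some $\psi_i \in Aut^0_{X/B}(U_i)$, and then the sections $s_i' := \psi_i^{-1}(s_i)$ agree on overlaps since $\psi_j^{-1}(s_j) = \psi_j^{-1}\phi_{ij}(s_i) = \psi_i^{-1}(s_i)$, so they glue to a global holomorphic section of $\pi$. Conversely, a global section $s$ supplies the local choice $s_i = s|_{U_i}$ for which the identity automorphism realizes the transition, and uniqueness forces $\phi_{ij} = \mathrm{id}$, whence $\alpha(X) = 0$. The only real subtlety in the whole argument is the sign/orientation bookkeeping in (2): one must track carefully in which direction the gluing identification of Subsection \ref{twists} is being applied so that the twist contributes $+s$ rather than $-s$. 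Everything else is a formal exercise in the torsor--cocycle dictionary for $Aut^0_{X/B}$.
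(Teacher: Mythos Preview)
Your proposal is correct and matches the paper's approach exactly: the paper writes out only part (3) (with the coboundary $\phi_{ij}=\psi_j^{-1}\psi_i$ producing the glued section $\psi_i(s_i)$) and declares that (1) and (2) ``follow the same lines,'' which is precisely the \v{C}ech/torsor bookkeeping you spell out using Markman's uniqueness and the commutativity of $Aut^0_{X/B}$. Your caveat about the sign in (2) is well taken and is the only point requiring care; it is a pure convention issue and does not affect the content.
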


\begin{proof}
We will prove only the third statement. The proof of the first two ones follows the same lines. Suppose that $\alpha(X) = 0$. Then there exist automorphisms $\phi_i\in Aut^0_{X/B}(U_i)$ such that $\phi_{ij} = \psi^{-1}_j\psi_i$. The sections $\phi_i(s_i)$ coincide on intersections, so they define a global section of $\pi$. The converse implication is straightforward.
\end{proof}

Let $a(X)$ to be the image of $\alpha(X)$ in $\Sha/\Sha^0 = H^2(B,\Gamma)$.

\begin{cor}\label{main property of a}
Let $\pi\colon X\to B$ be a Lagrangian fibration with reduced irreducible fibers. Then the class $a(X)$ vanishes if and only if there exists a deformation $X^s$ of $X$ in the Shafarevich--Tate family that admits a holomorphic section. Moreover, in this case, the class of $s$ in $\Sha^0$ is uniquely defined.
\end{cor}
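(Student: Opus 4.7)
The plan is to observe that the corollary is an essentially formal consequence of Lemma \ref{main property of alpha}, combined with the definitions of $a(X)$ and of the Shafarevich--Tate family.

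First I would handle the forward direction. Suppose $a(X) = 0$, i.e., $\alpha(X) \in \Sha^0$. Since $\Sha^0$ is (by construction) the image of the map $\tSha \to \Sha$, there exists a preimage in $\tSha$, and in particular we can set $s := -\alpha(X) \in \Sha^0$. By Lemma \ref{main property of alpha}(2), $\alpha(X^s) = \alpha(X) + s = 0$. Then Lemma \ref{main property of alpha}(3) applied to the Lagrangian fibration $\pi^s \colon X^s \to B$ (which has reduced irreducible fibers, since its fibers coincide with those of $\pi$) guarantees that $\pi^s$ admits a holomorphic section. The fiber $X^s$ is, by Proposition \ref{prop_ShT_family}, precisely the deformation in the Shafarevich--Tate family indexed by $s$.

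Next I would handle the converse. Suppose $X^s$ admits a holomorphic section for some $s \in \Sha^0$. Then $\alpha(X^s) = 0$ by Lemma \ref{main property of alpha}(3), hence by Lemma \ref{main property of alpha}(2) we have $\alpha(X) = -s \in \Sha^0$, and therefore $a(X) = 0$ in $\Sha/\Sha^0 = H^2(B,\Gamma)$.

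Finally, for the uniqueness statement, assume $s_1, s_2 \in \Sha^0$ both yield fibrations $\pi^{s_i}$ admitting holomorphic sections. Applying Lemma \ref{main property of alpha}(2) and (3) to each gives $\alpha(X) + s_1 = 0 = \alpha(X) + s_2$ in $\Sha$, hence $s_1 = s_2$ already as elements of $\Sha^0$ (not merely modulo some subgroup). I do not see any real obstacle here; the only minor point to verify is that our hypothesis on reduced irreducible fibers is preserved under taking twists, which is automatic since $\pi$ and $\pi^s$ have isomorphic fibers over $B$, so Lemma \ref{main property of alpha} genuinely applies to $\pi^s$.
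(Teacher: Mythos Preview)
Your proof is correct and follows essentially the same approach as the paper: both arguments deduce the result directly from Lemma \ref{main property of alpha} (2) and (3) by taking $s = -\alpha(X) \in \Sha^0$. Your version is in fact slightly more detailed, as you spell out the converse and the uniqueness (which the paper leaves implicit) and note explicitly that twists inherit reduced irreducible fibers.
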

\begin{proof}
The class $a(X)\in H^2(B,\Gamma)$ vanishes if and only if $\alpha:=\alpha(X)$ lies in $\Sha^0$. The class $\alpha(X^{-\alpha})$ vanishes by Lemma \ref{main property of alpha} (2). Therefore 
$$
\pi^{-\alpha} \colon X^{-\alpha} \to B
$$
admits a holomorphic section. Conversely, if $\alpha \notin \Sha^0$, then for every deformation $X^s$ in the Shafarevich--Tate family we have $a(X^s) \neq 0$.
\end{proof}

It was proved in \cite[Thm. 3.5]{bogomolov2022sections} that a Lagrangian fibration admits a  \textit{smooth} section if and only if some of its degenerate twistor deformations admits a holomorphic section. Combined with Corollary \ref{main property of a} we get that $a(X)$ is indeed a complete topological obstruction for existence of a section on a Lagrangian fibration with reduced irreducible fibers.

For the rest of the paper we will be proving the following theorem.

\begin{thrm}\label{when a vanishes}
Let $\pi \colon X \to B$ be a Lagrangian fibration on a compact hyperk\"ahler manifold over a smooth base. Let $\Gamma$ be the kernel of the exponential map $\pi_*T_{X/B} \to Aut^0_{X/B}$. Assume that the following holds:
\begin{itemize}
\item the fibers of $\pi$ are reduced and irreducible;
\item $H^3(X, \Q) =0$;
\item $H^2(B, \Gamma)$ is torsion-free.
\end{itemize}
Then there exists a unique deformation $(X^s, \pi^s)$ of $(X, \pi)$ in the Shafarevich--Tate family such that $\pi^s \colon X^s \to B$ admits a holomorphic section.
\end{thrm}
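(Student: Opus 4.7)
The plan is to invoke Corollary \ref{main property of a} and show that the obstruction class $a(X) \in H^2(\mathbb P^n, \Gamma)$ vanishes. Once this is done, the corollary guarantees the existence and uniqueness of an $s \in \Sha^0$ (explicitly $s = -\alpha(X)$) such that $\pi^s$ admits a holomorphic section, which is exactly the content of the theorem. The torsion-freeness of $H^2(\mathbb P^n, \Gamma)$ means that the natural map $H^2(\mathbb P^n, \Gamma) \to H^2(\mathbb P^n, \Gamma) \otimes \Q$ is injective, so it suffices to prove that $a(X)$ becomes zero after tensoring with $\Q$. Proposition \ref{Gamma is almost R^1pi_*Z} identifies $\Gamma \otimes \Q$ with $R^1\pi_*\Q$, so the task reduces to showing that the image of $a(X)$ in $H^2(\mathbb P^n, R^1\pi_*\Q)$ vanishes. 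I will in fact aim for the stronger statement $H^2(\mathbb P^n, R^1\pi_*\Q) = 0$.

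\textbf{Via the Leray spectral sequence.} I would exploit the Leray spectral sequence
$$E_2^{p,q} = H^p(\mathbb P^n, R^q\pi_*\Q) \Longrightarrow H^{p+q}(X, \Q)$$
together with the vanishing $H^3(X, \Q) = 0$. All $E_\infty^{p,q}$ on the anti-diagonal $p+q=3$ vanish, and $E_2^{3,0} = H^3(\mathbb P^n, \Q) = 0$ automatically. The only differentials affecting the term $E_2^{2,1}$ are the incoming $d_2\colon E_2^{0,2} \to E_2^{2,1}$ and the outgoing $d_2\colon E_2^{2,1} \to E_2^{4,0}$. The incoming differential $d_2\colon H^0(\mathbb P^n, R^2\pi_*\Q) \to H^2(\mathbb P^n, R^1\pi_*\Q)$ is precisely the one that Theorem \ref{Lefschetz intro} is designed to annihilate: the Hard Lefschetz-type splitting $R^{2n-1}\pi_*\Q \simeq R^1\pi_*\Q \oplus \mathcal N$, combined with the compatibility of Leray differentials with cup product by a relative K\"ahler class, permits a Deligne-style argument that forces this differential to vanish (this is the adaptation of Deligne's degeneration theorem alluded to in the introduction). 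A parallel argument, using the same splitting and a symmetry between $R^q$ and $R^{2n-q}$, should take care of the outgoing $d_2\colon E_2^{2,1} \to E_2^{4,0}$. Once both vanish, $H^2(\mathbb P^n, R^1\pi_*\Q) = E_\infty^{2,1}$ embeds into $H^3(X, \Q) = 0$, hence is itself zero.

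\textbf{Main obstacle and conclusion.} The hard part will be the Deligne-style vanishing of the relevant Leray differentials around bidegree $(2,1)$. Deligne's classical degeneration argument uses Hard Lefschetz on every fiber, and thus relies on smoothness of the morphism; here $\pi$ is genuinely singular, and Theorem \ref{Lefschetz intro} provides only a direct-summand version of Hard Lefschetz relating $R^1\pi_*\Q$ and $R^{2n-1}\pi_*\Q$ rather than a full isomorphism of all higher direct images. Verifying that this weaker input still drives the Deligne machinery is the main technical step, and is presumably the content of the intermediate results (notably Corollary \ref{restriction is surjective}) referred to in the introduction. Once $H^2(\mathbb P^n, R^1\pi_*\Q) = 0$ is secured, the conclusion is formal: $a(X)_\Q = 0$, hence by torsion-freeness $a(X) = 0$, and Corollary \ref{main property of a} produces a unique $s = -\alpha(X) \in \Sha^0$ such that $\pi^s$ has a holomorphic section. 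Uniqueness is also transparent from Lemma \ref{main property of alpha}: if $\pi^s$ and $\pi^{s'}$ both had sections, then $\alpha(X) + s = 0 = \alpha(X) + s'$, forcing $s = s'$.
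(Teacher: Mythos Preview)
Your proposal is essentially the paper's own argument: reduce to $a(X)=0$, use torsion-freeness and Proposition~\ref{Gamma is almost R^1pi_*Z} to pass to $H^2(B,R^1\pi_*\Q)$, then show the latter injects into $H^3(X,\Q)=0$ by controlling the differentials at $E_2^{2,1}$ in the Leray spectral sequence. The incoming $d_2\colon E_2^{0,2}\to E_2^{2,1}$ is killed exactly as you say, via the Hard Lefschetz splitting of Corollary~\ref{Lefschetz} and a Deligne-style commutation with cup product; this is Theorem~\ref{differential vanishes} in the paper, and Corollary~\ref{restriction is surjective}(2) is the resulting injection $H^2(R^1\pi_*\Q)\hookrightarrow H^3(X,\Q)$.

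One point deserves correction. For the outgoing differential $d_2\colon E_2^{2,1}\to E_2^{4,0}$ you propose a ``parallel argument, using the same splitting and a symmetry between $R^q$ and $R^{2n-q}$''. This is unnecessary and not obviously workable in the singular setting: Verdier duality does not give a clean $R^q\leftrightarrow R^{2n-q}$ symmetry of constructible sheaves here, and the Lefschetz splitting you have only relates $R^1$ and $R^{2n-1}$. The paper instead uses the elementary fact that for a surjective morphism of compact K\"ahler manifolds the pullback $\pi^*\colon H^p(B,\Q)\to H^p(X,\Q)$ is injective, so $E_2^{p,0}=E_\infty^{p,0}$ and hence every differential landing in the bottom row vanishes. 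With this in hand your outline goes through verbatim.
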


Note that the condition $H^3(X, \Q)=0$ holds if $X$ is deformation equivalent to the Hilbert scheme of points on a K3 surface or to one of the exceptional O'Grady examples.

Unfortunately, we are not able to get rid of the condition on $H^2(B, \Gamma)$. However, we strongly believe, that the theorem should be true without this assumption. Therefore we pose the following conjecture.

\begin{conj}
Let $\pi\colon X\to B$ be a Lagrangian fibration. Assume that $b^3(X) = 0$. Then there exists a degenerate twistor deformation of $\pi$ admitting a holomorphic section.
\end{conj}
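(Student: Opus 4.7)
The plan is to combine the obstruction-theoretic reduction of Corollary \ref{main property of a} with a Leray spectral sequence argument that uses the Hard Lefschetz-type Theorem \ref{Lefschetz intro}. The hypothesis of reduced irreducible fibers lets me apply Corollary \ref{main property of a}: existence and uniqueness of $s \in \Sha^0$ with $\pi^s$ admitting a holomorphic section is equivalent to the vanishing of $a(X) \in H^2(B, \Gamma)$. It therefore suffices to prove $H^2(B, \Gamma) = 0$. This group is finitely generated by Lemma \ref{about Gamma} and torsion-free by hypothesis, so it is enough to show $H^2(B, \Gamma) \o \Q = 0$. By Proposition \ref{Gamma is almost R^1pi_*Z} this equals $H^2(B, R^1\pi_*\Q_X)$, reducing the whole theorem to the vanishing of this last group.

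For that, I would consider the Leray spectral sequence $E_2^{p,q} = H^p(B, R^q\pi_*\Q_X) \Rightarrow H^{p+q}(X, \Q)$. Since $B = \mathbb P^n$, the term $E_2^{3,0} = 0$, so $E_\infty^{3,0} = 0$, and the hypothesis $H^3(X, \Q) = 0$ then forces $E_\infty^{2,1} = 0$ as well. Higher differentials at $E^{2,1}$ vanish for degree reasons, since $E_r^{2-r,r}$ and $E_r^{2+r,2-r}$ lie outside the first quadrant for $r \ge 3$. Thus $E_\infty^{2,1}$ is the cohomology of
\[
E_2^{0,2} \xrightarrow{\,d_2\,} E_2^{2,1} \xrightarrow{\,d_2\,} E_2^{4,0},
\]
and to conclude $E_2^{2,1} = 0$ I must show both of these $d_2$ differentials vanish. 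This is where Theorem \ref{Lefschetz intro} enters, via a strategy modelled on Deligne's proof that the Leray spectral sequence of a smooth projective morphism degenerates. The direct-summand inclusion $R^1\pi_*\Q_X \hookrightarrow R^{2n-1}\pi_*\Q_X$ provided by Theorem \ref{Lefschetz intro} should be realized, up to a nonzero scalar, as the inverse of the $(n-1)$-st iterate of cup product with a relatively K\"ahler class (which exists by Corollary \ref{relative polarisation}). Since this Lefschetz operator intertwines the Leray differentials, the vanishing of $d_2$ at position $(2,1)$ follows from the vanishing of $d_2$ at position $(2, 2n-1)$; at this top row one can directly control the differentials using $R^{2n}\pi_*\Q_X \simeq \Q_B$ (from the irreducibility of fibers) and $R^{2n+1}\pi_*\Q_X = 0$.

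The main obstacle is Theorem \ref{Lefschetz intro} itself. On the smooth locus $B^\circ$ the decomposition $R^{2n-1}\pi_*\Q_X|_{B^\circ} \simeq R^1\pi_*\Q_X|_{B^\circ} \oplus \mathcal N|_{B^\circ}$ is immediate from Hard Lefschetz on the abelian variety fibers, but extending it across the discriminant is delicate. The extension must exploit the hypothesis of local analytic sections outside codimension two (equivalently, no multiple fibers in codimension one) and probably proceeds via a careful application of the decomposition theorem for the projective morphism $\pi$, identifying the appropriate perverse summand of $R\pi_*\Q_X$ and comparing it with $R^1\pi_*\Q_X$. Once this structural result is in place, the spectral sequence chase above is almost automatic, so the technical heart of the argument is really the sheaf-theoretic statement of Theorem \ref{Lefschetz intro}.
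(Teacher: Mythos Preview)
This statement is a \emph{conjecture} in the paper; the authors explicitly do not prove it. What they prove is Theorem~\ref{when a vanishes}, which carries the additional hypothesis that $H^2(B,\Gamma)$ is torsion-free, and the entire point of the conjecture is to remove that hypothesis. Your proposal silently imports it: you write ``torsion-free by hypothesis'' when passing from $H^2(B,\Gamma)\o\Q=0$ to $H^2(B,\Gamma)=0$, but no such hypothesis is present in the conjecture. So your argument does not address the conjecture at all. (A secondary gap: you also assume the fibers are reduced in order to invoke Corollary~\ref{main property of a}, whereas the conjecture only assumes irreducible fibers.)

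What you have in fact sketched is a proof of Theorem~\ref{when a vanishes}, and for that your route is essentially the paper's: reduce to $H^2(B,R^1\pi_*\Q_X)=0$ via Proposition~\ref{Gamma is almost R^1pi_*Z}, then chase the Leray spectral sequence, using the Hard Lefschetz splitting of Corollary~\ref{Lefschetz} in Deligne's manner to kill the incoming $d_2\colon E_2^{0,2}\to E_2^{2,1}$ (this is Theorem~\ref{differential vanishes}), with the outgoing $d_2\colon E_2^{2,1}\to E_2^{4,0}$ vanishing for a separate reason. Two minor corrections to your sketch: the outgoing differential is handled in the paper via $E_2^{p,0}=E_\infty^{p,0}$, i.e.\ injectivity of $\pi^*$ on cohomology, rather than by a Lefschetz comparison with the top row; and Corollary~\ref{Lefschetz} is proved not via the decomposition theorem but by an explicit splitting map $f_{n-1,n}$ built from the symplectic form and Matsushita's isomorphism (Proposition~\ref{Matsushita isomorphism}), so no perverse sheaf machinery is needed.
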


We believe that the assumption that fibers are reduced and irreducible is not too restrictive. 

\begin{conj}[\cite{bogomolov}]
\label{first conjecture}
Let $\pi\colon X\to B$ be a Lagrangian fibration on a compact hyperk\"ahler manifold. Then it has no multiple fibres.
\end{conj}
\begin{conj}
\label{second conjecture}
Let $\pi\colon X\to B$ be a Lagrangian fibration. Consider the space $\mathcal M_\eta$ of deformations of $X$ such that $\eta$ remains of type $(1,1)$. Then a very general deformation of $X$ in $\mathcal M_\eta$ is a Lagrangian fibration with irreducible fibers.
\end{conj}

Both conjectures hold for K3 surfaces (see \cite[Prop. 1.6 (ii)]{huybrechts2016lectures} for Conjecture \ref{first conjecture} and \cite[I.1.Thrm. 4.8]{friedman1994smooth} for Conjecture \ref{second conjecture}).

\subsection{Hard Lefschetz type theorems for higher direct images of $\mathbb Q_X$}\label{hard lefschetz}

Let $\pi \colon X \to B$ be a Lagrangian fibration. In this Subsection we  prove a version of the Hard Lefschetz theorem for the sheaf $R^1\pi_*\Q_X$ which will be used in the proof of the Theorem \ref{when a vanishes}. Throughout this Subsection we assume that $X$ is projective. Let $l\in H^{1,1}_\Q (X)$ be an ample class. Abusing notation, we will denote by the same letter the induced section of $R^2\pi_*\Q_X$.

For each $p\ge 0$, multiplication by $l$ induces a map
$$
    L\colon R^p\pi_*\Q_X \to R^{p+2}\pi_*\Q_X.
$$
We will refer to $L$ as the \textit{Lefschetz map}.

\begin{lemma}[Lefschetz decomposition for $R^2\pi_*\Q_X$]\label{Lefschetz decomposition}
Let $\pi\colon X\to B$ be a Lagrangian fibration with $X$ projective. Assume that all fibers of $\pi$ are reduced and irreducible. Then the sheaf $R^2\pi_*\Q_X$ decomposes as
$$
R^2\pi^*\Q_X = \Q_B \cdot l\oplus (R^2\pi_*\Q_X)_{prim}
$$
where $(R^2\pi_*\Q_X)_{prim}$ is the kernel of the map 
$$
L^{n-1}\colon R^2\pi_*\Q_X \to R^{2n}\pi_*\Q_X.
$$
\end{lemma}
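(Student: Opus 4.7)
The strategy is to split the tautological short exact sequence
$$
0 \to (R^2\pi_*\Q_X)_{prim} \to R^2\pi_*\Q_X \xrightarrow{L^{n-1}} \operatorname{im} L^{n-1} \to 0
$$
by constructing a retraction of the natural inclusion $\lambda\colon \Q_B\hookrightarrow R^2\pi_*\Q_X$ associated to the global section $l\in H^0(B, R^2\pi_*\Q_X)$; that is, a sheaf map $\rho\colon R^2\pi_*\Q_X\to\Q_B$ with $\rho\circ\lambda=\mathrm{id}$ and $\ker\rho=(R^2\pi_*\Q_X)_{prim}$. The natural candidate is $\rho:=\mu^{-1}\circ L^{n-1}$, where $\mu\colon\Q_B\to R^{2n}\pi_*\Q_X$ is the sheaf morphism given by the global section $l^n$. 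Since $L^{n-1}\circ\lambda=\mu$ by associativity of the cup product, $\rho\circ\lambda=\mathrm{id}$ is automatic once $\mu$ is inverted, and $\ker\rho=\ker L^{n-1}$ tautologically.

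The content of the proof is therefore the claim that $\mu$ is an isomorphism of sheaves, which by passing to stalks reduces to two inputs. First, proper base change (applicable since $\pi$ is proper) identifies $(R^{2n}\pi_*\Q_X)_b$ with the ordinary cohomology $H^{2n}(F_b,\Q)$. Under the assumption that every fiber $F_b$ is reduced and irreducible of pure complex dimension $n$, the fundamental class generates $H_{2n}(F_b,\Q)\simeq\Q$, and the universal coefficient theorem over $\Q$ yields $H^{2n}(F_b,\Q)\simeq\Q$. Second, since $l$ is ample on $X$, its restriction to $F_b$ is ample on the projective variety $F_b$, so $\int_{F_b}l^n>0$; hence $l^n|_{F_b}$ is a non-zero, and therefore generating, class of the one-dimensional space $H^{2n}(F_b,\Q)$. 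Together these show that $\mu_b$ is an isomorphism at every $b\in B$, so $\mu$ itself is an isomorphism of sheaves.

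The main obstacle is precisely the one-dimensionality statement $H^{2n}(F_b,\Q)=\Q$ at every, possibly singular, fiber. This is where both the irreducibility and the reducedness hypotheses enter essentially: without them, singular fibers could have $H^{2n}$ of larger rank, or the fundamental class could fail to give a single generator, and $\mu$ would cease to be a stalkwise isomorphism. Once this point is secured, inverting $\mu$ produces $\rho$, the kernel identification $\ker\rho=(R^2\pi_*\Q_X)_{prim}$ is immediate, and the splitting gives the decomposition $R^2\pi_*\Q_X=\Q_B\cdot l\oplus(R^2\pi_*\Q_X)_{prim}$.
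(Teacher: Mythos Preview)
Your proof is correct and follows essentially the same approach as the paper: both argue that $R^{2n}\pi_*\Q_X\simeq\Q_B$ (the paper states this in one line from irreducibility of the fibers; you unpack it via proper base change and the fundamental class), and then observe that $L^{n-1}$ carries the subsheaf $\Q_B\cdot l$ isomorphically onto $R^{2n}\pi_*\Q_X$, which gives the splitting. One minor remark: the isomorphism $R^{2n}\pi_*\Q_X\simeq\Q_B$ uses only irreducibility, since the topological fiber is insensitive to the scheme structure; reducedness is not needed at this step, and indeed the paper's proof invokes only irreducibility here.
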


\begin{proof}
Since the fibres are irreducible we have $R^{2n}\pi_*\Q_X \simeq \Q_B$. The restriction of $L^{n-1} \colon R^2\pi_*\Q_X \to R^{2n}\pi_*\Q_X$ on the subsheaf generated by $l$ is an isomorphism. Hence the claim.
\end{proof}

We move on to study the $(n-1)$-th power of the Lefschetz map on $R^1\pi_*\Q_X$, that is
\begin{equation}
\label{L on one}
L^{n-1}\colon R^1\pi_*\Q_X \to R^{2n-1}\pi_*\Q_X.
\end{equation}
First, note that multiplication by $l\in H^{1,1}_\Q(X)$ induces maps on $R^p\pi_*\Omega^q_X$ as well:
$$
    L\colon R^p\pi_*\Omega^q_X \to R^{p+1}\pi_*\Omega^{q+1}_X.
$$
for any $p,q = 0,\dots, n$. Abusing notation, we will denote these maps also by $L$. Lefschetz maps commute with the natural morphisms  $R^{\bullet}\pi_*\Q_X \to R^{\bullet}\pi_*\O_X$, the \textit{relative Hodge projections}. In particular, the following diagram is commutative.
$$
    \xymatrix{
    R^1\pi_*\Q_X \ar[r] \ar[d]_{L^{n-1}} & R^1\pi_*\mathcal O_X \ar[d]^{L^{n-1}}\\
    R^{2n-1}\pi_*\Q_X \ar[r] & R^n\pi_* \Omega^{n-1}_X
    }
$$
The horizontal arrows in this diagram are Hodge projections. The holomorphic symplectic form $\sigma$ on $X$ induces the isomorphism $\Omega^1_X\simeq T_X$. The composition of this isomorphism with the natural map $T_X\to\pi^* T_B$ gives us the map
$$
\theta\colon \Omega^1_X\to \pi^* T_B.
$$
For every pair of integers $p,q$ one can apply the functor $R^q\pi_*\Lambda^p(-)$ and get the following map of sheaves on $B$:
$$
R^q\pi_*(\Lambda^p\theta)\colon R^q\pi_*\Omega^p_X \to R^q\pi_*(\pi^*\Lambda^pT_B).
$$
The sheaf on the right is:
$$
R^q\pi_*(\pi^*\Lambda^p T_B) \simeq R^q\pi_*\mathcal O_X \otimes \Lambda^p T_B \simeq \Omega^q_B\otimes \Lambda^p T_B\simeq R^q\pi_*\mathcal O_X \otimes \left(R^p\pi_*\mathcal O_X\right)^*.
$$
The isomorphisms follow from the projection formula and Matsushita's theorem (Proposition \ref{Matsushita isomorphism}).

For each $p,q$ we have constructed a map
$$
f_{p,q}\colon R^q\pi_*\Omega^p_X \to R^q\pi_*\mathcal O_X\otimes \left(R^p\pi_*\mathcal O_X\right)^*.
$$
In particular, there are the following maps:
\begin{gather*}
    f_{0,1}\colon R^1\pi_*\mathcal O_X \to R^1\pi_*\mathcal O_X\\
    f_{1,1}\colon R^1\pi_*\Omega^1_X \to \mathcal{End} (R^1\pi_*\mathcal O_X)\\
    f_{n-1,n}\colon R^n\pi_*\Omega^{n-1}_X \to R^n\pi_*\mathcal O_X\otimes \left(R^{n-1}\pi_*\mathcal O_X\right)^* \simeq R^1\pi_*\mathcal O_X.
\end{gather*}
The map $f_{0,1}$ is the identity map by construction. It is easy to see that the image of  $l$ under $f_{1,1}$ is the identity operator.

\begin{lemma}
\label{decomposition coherent}
$f_{n-1,n}\circ L^{n-1} = \mathrm{id}_{R^1\pi_*\mathcal O_X}.$
\end{lemma}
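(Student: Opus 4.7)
The approach is to reduce the statement to a fiberwise computation on a smooth Lagrangian fiber. Both the source $R^1\pi_*\mathcal{O}_X$ and the target (isomorphic to $\Omega^1_B$ via the composition of Matsushita's isomorphisms with the canonical contraction $\Omega^n_B \otimes \Lambda^{n-1}T_B \xrightarrow{\sim} \Omega^1_B$) are locally free sheaves of rank $n$. A morphism between them is determined by its restriction to the dense open subset $B^\circ$, so by proper base change it suffices to check that for every $b \in B^\circ$ the induced endomorphism of $H^1(F_b, \mathcal{O}_{F_b})$ is the identity.

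On a smooth fiber $F_b$, the Lagrangian property yields a canonical description: $\theta|_{F_b}\colon \Omega^1_X|_{F_b} \to T_{B,b} \otimes \mathcal{O}_{F_b}$ has kernel $N^*_{F_b/X} = \pi^*\Omega^1_B|_{F_b}$ and descends to an isomorphism $\bar{\theta}|_{F_b}\colon \Omega^1_{F_b} \xrightarrow{\sim} T_{B,b} \otimes \mathcal{O}_{F_b}$. Consequently $\Lambda^{n-1}\theta|_{F_b}$ annihilates every decomposable $(n-1)$-form containing a conormal factor and reads off only the purely fiberwise component. Combined with the fact that the restriction map $H^\bullet(F_b, \Omega^\bullet_X|_{F_b}) \to H^\bullet(F_b, \Omega^\bullet_{F_b})$ is a ring map sending $l|_{F_b}$ to the induced polarization on $F_b$, this shows that the fiberwise composition coincides with the Hard Lefschetz operation $(l|_{F_b})^{n-1}\colon H^1(F_b, \mathcal{O}_{F_b}) \xrightarrow{\sim} H^n(F_b, \Omega^{n-1}_{F_b})$ on the polarized abelian variety $F_b$, followed by $\Lambda^{n-1}\bar{\theta}|_{F_b}$ and the canonical contraction back to $H^1(F_b, \mathcal{O}_{F_b})$.

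A direct linear-algebra check on the abelian variety $F_b$ then confirms that this composition is the identity. Concretely, choose a basis $\{v^i\}$ of $T_{F_b,0}$ and a dual basis $\{e_i\}$ of $T^*_{F_b,0}$ such that $\bar{\theta}(e_i) = \partial_i$ and $\tilde{\omega}(dx_i) = \bar{e}_i$; one verifies by direct computation, essentially via the same cup-product identities underlying $f_{1,1}(l) = \mathrm{id}$, that $\bar{e}_i$ is carried back to itself by the full chain. The main technical obstacle is that the Matsushita isomorphisms $\tilde{\omega}^q = \Lambda^q\tilde{\omega}$, the iterated Lefschetz operator, and the contraction $\Omega^n_B \otimes \Lambda^{n-1}T_B \to \Omega^1_B$ each carry sign and combinatorial conventions that must cancel exactly so as to yield the identity rather than a nonzero scalar multiple.
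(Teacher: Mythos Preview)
Your approach is valid in outline but takes a different and heavier route than the paper's. The paper observes that the maps $f_{p,q}$ are compatible with cup product in the sense that the obvious square
\[
\xymatrix{
R^{q_1}\pi_*\Omega^{p_1}_X\otimes R^{q_2}\pi_*\Omega^{p_2}_X \ar[r]\ar[d]^{f_{p_1,q_1}\otimes f_{p_2,q_2}} & R^{q_1+q_2}\pi_*\Omega^{p_1+p_2}_X \ar[d]^{f_{p_1+p_2,q_1+q_2}}\\
R^{q_1}\pi_*\mathcal O_X\otimes (R^{p_1}\pi_*\mathcal O_X)^* \otimes R^{q_2}\pi_*\mathcal O_X\otimes (R^{p_2}\pi_*\mathcal O_X)^* \ar[r] & R^{q_1+q_2}\pi_*\mathcal O_X\otimes (R^{p_1+p_2}\pi_*\mathcal O_X)^*
}
\]
commutes. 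Since $f_{0,1}=\mathrm{id}$ and $f_{1,1}(l)=\mathrm{id}_{R^1\pi_*\mathcal O_X}$, iterating this multiplicativity $(n-1)$ times gives $f_{n-1,n}(\alpha\cdot l^{n-1})=\alpha$ immediately, with no restriction to $B^\circ$, no fiberwise coordinates, and no scalar bookkeeping. The whole argument is two diagrams.

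Your fiberwise reduction is legitimate---both sheaves are locally free, so checking on $B^\circ$ suffices---and the description of $\theta$ on a smooth Lagrangian fiber is correct. But you end by admitting that the ``main technical obstacle'' is precisely the cancellation of the sign and combinatorial factors coming from $\Lambda^q\tilde\omega$, from $L^{n-1}$, and from the contraction $\Omega^n_B\otimes\Lambda^{n-1}T_B\to\Omega^1_B$, and you do not actually carry this out. That is a genuine gap for the lemma as stated (which asserts the identity, not merely an isomorphism). The paper's multiplicative argument sidesteps this entirely: once you know $f_{1,1}(l)=\mathrm{id}$, the functoriality of $\Lambda^\bullet\theta$ with respect to wedge product forces all the constants to be $1$ without ever writing a basis. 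I would recommend replacing your coordinate computation with that global argument; it is both shorter and actually complete.
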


\begin{proof}
The maps $f_{p,q}$ commute with the multiplication of forms in the sense that the following diagram is commutative for any $p_1,q_1,p_2,q_2 = 0,\dots,n$.
$$
    \xymatrix{
    R^{q_1}\pi_*\Omega^{p_1}_X\otimes R^{q_2}\pi_*\Omega^{p_2}_X \ar[r]\ar[d]^{f_{p_1,q_1}\otimes f_{p_2,q_2}} & R^{q_1+q_2}\pi_*\Omega^{p_1+p_2}_X \ar[d]^{f_{p_1+p_2,q_1+q_2}}\\
    R^{q_1}\pi_*\mathcal O_X\otimes \left(R^{p_1}\pi_*\mathcal O_X\right)^* \otimes     R^{q_2}\pi_*\mathcal O_X\otimes \left(R^{p_2}\pi_*\mathcal O_X\right)^*
     \ar[r] &   R^{q_1+q_2}\pi_*\mathcal O_X\otimes \left(R^{p_1+p_2}\pi_*\mathcal O_X\right)^*
    } 
$$
It follows that the following diagram is commutative
$$
    \xymatrix{
    R^1\pi_*\mathcal O_X \o \left(R^1\pi_* \Omega^1_X\right)^{\o\: n-1} \ar[r] \ar[d]^{\mathrm{id}\:\o (f_{1,1})^{\o\: n-1}} & R^n\pi_*\Omega^{n-1}_X \ar[d]^{f_{n-1,n}}\\
    R^1\pi_*\mathcal O_X\o \left(\mathcal{End}(R^1\pi_*\mathcal O_X)\right)^{\o\: n-1} \ar[r] & R^1\pi_*\mathcal O_X
    }
$$
Since $f_{1,1}(l) = \mathrm{id}_{R^1\pi_*\O_X}$, we obtain that for every local section $\alpha$ of $R^1\pi_*\O_X$
$$
f_{n-1,n}(\alpha\cdot l^{n-1}) = f_{n-1,n}\circ L^{n-1}(\alpha) = \alpha.
$$
\end{proof}

\begin{cor}
\label{Lefschetz}
Let $\pi\colon X\to B$ be a Lagrangian fibration on a projective hyperk\"ahler manifold $X$. Then there exists a sheaf $\mathcal N$ on $B$ such that the sheaf $R^{2n-1}\pi_*\Q_X$ decomposes into the direct sum
$$
R^{2n-1}\pi_*\mathbb Q_X \simeq R^1\pi_*\mathbb Q_X \oplus \mathcal N.
$$
The embedding of the first summand is given by the map (\ref{L on one}).
\end{cor}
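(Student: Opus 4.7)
The goal is to exhibit $L^{n-1}\colon R^1\pi_*\Q_X \to R^{2n-1}\pi_*\Q_X$ of (\ref{L on one}) as a split monomorphism of sheaves on $B$; $\mathcal N$ can then be taken as any direct complement of its image.

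I plan to proceed in two steps, the first a warm-up and the second substantive. First, I establish injectivity of $L^{n-1}$. On the regular locus $B^\circ$, both source and target are local systems whose stalks are $H^1(F_b,\Q)$ and $H^{2n-1}(F_b,\Q)$ of the abelian variety fiber $F_b$, and the classical hard Lefschetz theorem identifies $L^{n-1}$ with a local-system isomorphism. To extend injectivity across the discriminant $D$, I compose with the Hodge projection $R^1\pi_*\Q_X\o\C \to R^1\pi_*\O_X$ and invoke Lemma \ref{decomposition coherent}: any local section annihilated by $L^{n-1}$ must vanish after projection to $R^1\pi_*\O_X$, since $f_{n-1,n}\circ L^{n-1}=\operatorname{id}$, and the Hodge projection is itself injective (a rational $(1,0)$-class on a smooth abelian fiber is zero, and $R^1\pi_*\Q_X$ admits no sections supported on $D$ as it injects into the locally free sheaf $R^1\pi_*\O_X$ after base change).

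The main obstacle is producing a $\Q$-linear splitting of $L^{n-1}$. A coherent splitting is provided by $f_{n-1,n}$ from Lemma \ref{decomposition coherent}, but it targets the $(n-1,n)$ Hodge component $R^n\pi_*\Omega^{n-1}_X$, which is only one summand of $R^{2n-1}\pi_*\C_X$ on smooth fibers, and hence does not descend to rational coefficients in any immediate way. My plan is to invoke the BBD decomposition theorem together with the relative hard Lefschetz theorem for the projective morphism $\pi$: these produce a splitting of $R\pi_*\Q_X[n]$ into its perverse cohomology sheaves in $D^b_c(B,\Q)$, together with canonical isomorphisms $L^k\colon{}^p\mathcal H^{-k}\xrightarrow{\sim}{}^p\mathcal H^{k}$. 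For a Lagrangian fibration on a projective hyperk\"ahler manifold, the perverse filtration on $R\pi_*\Q_X$ is controlled by the Leray filtration (following work of Shen--Yin and de Cataldo--Rapagnetta--Sacc\`a), and the Leray spectral sequence of $\pi$ degenerates at $E_2$. Combining these facts, the derived-category isomorphism coming from relative hard Lefschetz with $k=n-1$ descends to an honest splitting of $L^{n-1}\colon R^1\pi_*\Q_X\to R^{2n-1}\pi_*\Q_X$, from which the decomposition $R^{2n-1}\pi_*\Q_X\simeq R^1\pi_*\Q_X\oplus\mathcal N$ follows.
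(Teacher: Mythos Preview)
You overlooked the key elementary step that the paper actually uses, and in doing so replaced it with machinery that is both heavier and, in this context, potentially circular. The paper does \emph{not} abandon the coherent retraction $f_{n-1,n}$; rather, it shows that the composite
\[
R^{2n-1}\pi_*\Q_X \longrightarrow R^n\pi_*\Omega^{n-1}_X \xrightarrow{\,f_{n-1,n}\,} R^1\pi_*\O_X
\]
factors through $R^1\pi_*\Q_X\subset R^1\pi_*\O_X$ and therefore furnishes the desired $\Q$-linear splitting. The reason is simple: over $B^\circ$ the classical Hard Lefschetz theorem makes $L^{n-1}$ an isomorphism of $\Q$-local systems, so its left inverse $f_{n-1,n}$ carries $R^{2n-1}\pi_*\Q_X|_{B^\circ}$ into $R^1\pi_*\Q_X|_{B^\circ}$; then any local section of $R^1\pi_*\O_X$ whose restriction to $B^\circ$ is rational is itself rational, by the local-invariant-cycle argument already used in the proof of Proposition~\ref{Gamma is almost R^1pi_*Z}. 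This two-line observation is precisely the ``immediate way'' you asserted does not exist.

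Your alternative route through BBD, relative Hard Lefschetz on perverse cohomology, and the perverse--Leray comparison of Shen--Yin or de Cataldo--Rapagnetta--Sacc\`a could in principle be made to work, but the sketch has gaps: you would need to justify why the perverse decomposition on $R\pi_*\Q_X[n]$ yields a splitting at the level of the ordinary cohomology sheaves $R^i\pi_*\Q_X$ (this is not automatic without knowing that perverse and standard truncations agree), and your appeal to degeneration of the Leray spectral sequence at $E_2$ is problematic here, since the very purpose of Corollary~\ref{Lefschetz} in the paper is to establish Theorem~\ref{differential vanishes}, a piece of that degeneration. So your argument risks circularity within the logic of the paper. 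Your Step~1 on injectivity is fine but unnecessary once a retraction is in hand.
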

\begin{proof}
The map (\ref{L on one}) is an isomorphism after the restriction to $B^\circ\subset B$ by the Hard Lefschetz theorem. Together with Lemma \ref{decomposition coherent} this implies that the map $f_{n-1,n}|_{B^\circ}$ sends $R^{2n-1}\pi_*\mathbb Q|_{B^\circ}$ isomorphically to $R^1\pi_*\mathbb Q|_{B^\circ}$. A local section of $R^1\pi_*\mathcal O_X$ whose restriction to $B^\circ$ lies in $R^1\pi_*\mathbb Q$ is necessarily a section of $R^1\pi_*\mathbb Q$ by the proof of Proposition \ref{Gamma is almost R^1pi_*Z}. Hence the map $f_{n-1,n}$ descends to a map
$$
f_{n-1, n}|_{R^{2n-1}\pi_*\Q_X}\colon R^{2n-1}\pi_*\mathbb Q\to R^1\pi_*\mathbb Q.
$$
This map satisfies the following property (Lemma \ref{decomposition coherent}):
$$
f_{n-1,n}|_{R^{2n-1}\pi_*\Q_X}\circ L^{n-1} = \mathrm{id}_{R^1\pi_*\Q_X}.
$$
The claim now follows.
\end{proof}

\begin{rmk}
It can be proven that the sheaf $\mathcal N$ from Proposition \ref{Lefschetz} is supported on a codimension two subset. The result follows from the description of a general singular fiber of a Lagrangian fibration given in \cite{hwang2009characteristic}. We do not know whether the sheaf $\mathcal N$ can be non-trivial.
\end{rmk}


\subsection{The discrete part of Shafarevich--Tate groups}\label{obstruction b_3}

Let $\Sha$ be the Shafarevich--Tate group of a Lagrangian fibration $\pi\colon X\to B$. Recall that the group $\Sha/\Sha^0$ of connected components of $\Sha$ is isomorphic to $H^2(B,\Gamma)$ (see the exact sequence (\ref{sha zero to sha})). We sometimes refer to $\Sha/\Sha^0$ as the discrete part of $\Sha$.

By Proposition \ref{Gamma is almost R^1pi_*Z}, there is an isomorphism
$$
H^2(B,\Gamma)\o_\mathbb Z \Q \simeq H^2(B, R^1\pi_*\Q_X).
$$
The natural map $E^{p,0}_2 = H^p(B, \Q) \to H^p(X, \Q)$ is given by the pullback map $\pi^*$. The pullback map on cohomology is injective for every surjective map of compact K\"ahler manifolds (see e.g. \cite[Lem. 7.28]{voisin2007hodge}). This implies that $E^{p,0}_2 = E^{p,0}_{\infty}$. Therefore, the differential $d_2\colon E_2^{2,1}\to E_2^{4,0}$ vanishes. All the higher differentials with the source in $E^{2,1}$ vanish because their targets are trivial groups. For the same reason, all the higher differentials $d_n, n>2$ with the source in $E^{0,2}$ vanish too. Since $E^{3,0}_2 = H^3(B,\Q)$ is trivial, there is an embedding of $E^{2,1}_\infty = E^{2,1}_2/\operatorname{im} (d_2)$ into $H^3(X,\Q)$. Moreover, we have the following exact sequence of $\Q$-vector spaces
\begin{equation}
\label{amazing exact sequence}
    H^2(X,\Q) \xrightarrow{r} H^0(B,R^2\pi_*\Q_X) \xrightarrow{d_2} H^2(B,R^1\pi_*\Q_X) \xrightarrow{e} H^3(X,\Q).
\end{equation}

Our goal is to prove the following theorem:

\begin{thrm}\label{differential vanishes}
Let $\pi\colon X\to B$ be a Lagrangian fibration with reduced irreducible fibers. Then the differential
$$
d_2\colon H^0(B,R^2\pi_*\Q_X)\to H^2(B,R^1\pi_*\Q_X)
$$
in the Leray spectral sequence of $\pi$ vanishes.
\end{thrm}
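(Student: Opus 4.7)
The plan is to carry out a Deligne-style argument, using the Hard Lefschetz-type decomposition of Subsection \ref{hard lefschetz} in place of the relative Hard Lefschetz theorem for smooth projective morphisms. The guiding principle is that $d_2$ vanishes on the primitive part of $R^2\pi_*\Q_X$ because $L^{n-1}$ kills primitive classes while remaining injective on $H^2(B, R^1\pi_*\Q_X)$, and vanishes on the non-primitive part because that part lifts to an actual class on $X$.

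The first step is to fix a class $l \in H^2(X, \Q)$ that pairs positively with the class of a hyperplane on every smooth fiber; Corollary \ref{relative polarisation} guarantees the existence of such an $l$. Cup product with $l$ defines an endomorphism of bidegree $(0, 2)$ of the Leray spectral sequence of $\Q_X$ and therefore commutes with every differential $d_r$. The second step is to decompose $H^0(B, R^2\pi_*\Q_X)$ by Lemma \ref{Lefschetz decomposition}:
$$
H^0(B, R^2\pi_*\Q_X) \;=\; \Q \cdot l \;\oplus\; H^0(B, (R^2\pi_*\Q_X)_{prim}).
$$
The class $l$ lies in the image of the edge map $r \colon H^2(X, \Q) \to H^0(B, R^2\pi_*\Q_X)$ because $l$ is defined globally on $X$; by the exact sequence (\ref{amazing exact sequence}) this image is precisely $\ker d_2$, so $d_2(l) = 0$.

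The third step is to prove $d_2$ vanishes on the primitive summand. Take $\alpha \in H^0(B, (R^2\pi_*\Q_X)_{prim})$. By the definition of the primitive part we have $L^{n-1}\alpha = 0$ in $H^0(B, R^{2n}\pi_*\Q_X)$, hence
$$
L^{n-1} d_2(\alpha) \;=\; d_2(L^{n-1}\alpha) \;=\; 0
$$
in $H^2(B, R^{2n-1}\pi_*\Q_X)$. By Corollary \ref{Lefschetz}, the map $L^{n-1}\colon R^1\pi_*\Q_X \to R^{2n-1}\pi_*\Q_X$ admits a left inverse and therefore induces an injection on $H^2(B, -)$. Consequently $d_2(\alpha) = 0$, which concludes the proof.

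The main technical obstacle I anticipate is the careful justification that cup product with $l$ commutes with the differential $d_2$; this is standard once one realizes the Leray spectral sequence via a filtered resolution of $\Q_X$ on which cup product acts as an endomorphism of the filtration, but it should be recorded explicitly. A secondary concern is that Corollary \ref{Lefschetz} and Lemma \ref{Lefschetz decomposition} are stated under a projectivity hypothesis on $X$, whereas Theorem \ref{differential vanishes} does not assume this. I would handle this by rerunning the arguments of Subsection \ref{hard lefschetz} with $l$ taken to be the Kähler class from Corollary \ref{relative polarisation} rather than an ample class, using the non-projective form of Matsushita's isomorphism recalled in Remark \ref{not projective}; the proofs go through verbatim because all that is used about $l$ is that it restricts to an integral polarization on every smooth fiber.
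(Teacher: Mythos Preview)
Your argument in the projective case is exactly the paper's: the same Lefschetz decomposition from Lemma \ref{Lefschetz decomposition}, the same reason $d_2(l)=0$ (it lies in the image of the edge map $r$), and the same use of Corollary \ref{Lefschetz} to force $d_2$ to vanish on the primitive part via the commutative square with $L^{n-1}$. One small correction: Corollary \ref{relative polarisation} only produces a \emph{real} K\"ahler class, not a rational one, so it cannot be cited for $l\in H^2(X,\Q)$; in the projective case you should simply take $l$ ample.

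The only genuine difference is your treatment of the non-projective case. The paper does not rerun the arguments of Subsection \ref{hard lefschetz} with a K\"ahler class. Instead it invokes \cite[Cor.~3.4]{SV} to produce a degenerate twistor deformation $\pi'\colon X'\to B$ with $X'$ projective, applies the projective case to $\pi'$, and observes that $\pi$ and $\pi'$ coincide as continuous maps of topological spaces, so their Leray spectral sequences for $\Q$---and hence the differentials $d_2$---are literally the same. Your alternative is viable but has a gap you did not flag: when $X$ is non-projective, Lemma \ref{characterization of projectivity} shows there is \emph{no} rational class $l\in H^{1,1}_{\Q}(X)$ with $q(l,\eta)\neq 0$, so the K\"ahler class from Corollary \ref{relative polarisation} is genuinely irrational and cannot act on the $\Q$-spectral sequence. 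You would have to run the whole argument with $\R$-coefficients and then conclude via $d_2^{\Q}\otimes_{\Q}\R = d_2^{\R}$. This works, but the paper's topological reduction is cleaner and sidesteps the coefficient bookkeeping entirely.
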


\begin{proof} Assume that $X$ is projective. Lemma \ref{Lefschetz decomposition} implies that
$$
H^0(B, R^2\pi_*\Q_X) = H^0(B,\Q)\oplus H^0(B,(R^2\pi_*\Q_X)_{prim})
$$
where $(R^2\pi_*\Q_X)_{prim}$ is the kernel of the map $L^{n-1}\colon R^2\pi_*\Q_X\to R^{2n}\pi_*\Q_X$. The summand $H^0(B,\Q)$ is generated by the image of the ample class $l$ of $X$ in $H^0(R^2\pi_*\Q_X)$. Hence $d_2|_{H^0(B,\Q)}$ vanishes.

We are left to prove that $d_2|_{H^0((R^2\pi_*\Q_X)_{prim})}$ vanishes. The differentials in the Leray spectral sequence commute with the Lefschetz maps. In particular, the following diagram is commutative.
$$
\xymatrix{
H^0((R^2\pi_*\Q_X)_{prim})\ar[r]^{d_2} \ar[d]^{L^{n-1}} & H^2(R^1\pi_*\Q_X)\ar[d]^{L^{n-1}}\\
H^0(R^{2n}\pi_*\Q_X)\ar[r]^{d_2} & H^2(R^{2n-1}\pi_*\Q_X)
}
$$
The vertical arrow on the left-hand side vanishes by the definition of the primitive part of $R^2\pi_*\Q_X$. The vertical map on the right-hand side is injective. Indeed, by Corollary \ref{Lefschetz} the map $L^{n-1}$ embeds $H^2(R^1\pi_*\Q_X)$ into $H^2(R^{2n-1}\pi_*\Q_X)$ as a direct summand. It follows that $d_2|_{H^0((R^2\pi_*\Q_X)_{prim})}$ must vanish. 

\hfill

{\bf Step 2:} In the case when $X$ is only assumed to be K\"ahler there exists a degenerate twistor deformation $\pi'\colon X'\to B$ of $X$ such that $X'$ is projective \cite[Cor. 3.4]{soldatenkov2021moser}. The statement of the theorem holds for $X'$. Since topologically the maps $\pi$ and $\pi'$ coincide, the statement of the theorem holds for $X$ as well. 
\end{proof}

\begin{cor}
\label{restriction is surjective}
In the setting of Theorem \ref{differential vanishes}, the following holds:
\begin{itemize}
    \item [(1)] The restriction map $r\colon H^2(X,\Q)\to H^0(B,R^2\pi_*\Q_X)$ is surjective.
    \item [(2)] The map $e\colon H^2(R^1\pi_*\Q_X)\to H^3(X,\Q)$ from the exact sequence (\ref{amazing exact sequence}) is injective.
\end{itemize}
\end{cor}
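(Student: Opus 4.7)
The plan is immediate: both assertions are formal consequences of the vanishing statement just established in Theorem \ref{differential vanishes}, fed into the four-term exact sequence (\ref{amazing exact sequence}) derived from the Leray spectral sequence. So the proof is essentially a one-line invocation of exactness.

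More precisely, I would first recall that the sequence
$$
H^2(X,\Q) \xrightarrow{r} H^0(B,R^2\pi_*\Q_X) \xrightarrow{d_2} H^2(B,R^1\pi_*\Q_X) \xrightarrow{e} H^3(X,\Q)
$$
was already obtained in the discussion preceding Theorem \ref{differential vanishes}. Its exactness is a consequence of three facts about the Leray spectral sequence of $\pi$: that $E_2^{p,0}=E_\infty^{p,0}$ (because $\pi^\ast\colon H^p(B,\Q)\to H^p(X,\Q)$ is injective for a surjective map of compact K\"ahler manifolds), that all higher differentials with source in $E^{2,1}$ or $E^{0,2}$ vanish for dimension reasons, and that $H^3(B,\Q)=H^3(\mathbb P^n,\Q)=0$ so that $E^{2,1}_\infty$ embeds into $H^3(X,\Q)$.

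For part (1), exactness at $H^0(B,R^2\pi_*\Q_X)$ gives $\operatorname{im}(r)=\ker(d_2)$; since $d_2=0$ by Theorem \ref{differential vanishes}, we conclude $\operatorname{im}(r)=H^0(B,R^2\pi_*\Q_X)$, i.e. $r$ is surjective. For part (2), exactness at $H^2(B,R^1\pi_*\Q_X)$ gives $\ker(e)=\operatorname{im}(d_2)=0$, so $e$ is injective. There is no real obstacle here; the entire content of the corollary was already packaged into Theorem \ref{differential vanishes} together with the construction of (\ref{amazing exact sequence}), and the corollary merely reads off the two consequences.
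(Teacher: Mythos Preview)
Your argument is correct and matches the paper's own proof, which simply says the corollary follows from Theorem \ref{differential vanishes} and the exact sequence (\ref{amazing exact sequence}). You have merely spelled out explicitly the two applications of exactness that the paper leaves implicit.
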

\begin{proof}
Follows from Theorem \ref{differential vanishes} and the exact sequence (\ref{amazing exact sequence}).
\end{proof}

\noindent {\bf Proof of Theorem \ref{when a vanishes}:} It follows from Corollary \ref{restriction is surjective} (2) and Proposition \ref{Gamma is almost R^1pi_*Z} that there exists an embedding
\[
(\Sha/\Sha^0)\o_{\Z} \Q \hookrightarrow H^3(X, \Q).
\]
In particular, if $b_3(X)$ vanishes, $\Sha/\Sha^0$ is finite. Since $\Sha/\Sha^0 \simeq H^2(B, \Gamma)$ this finishes the proof.\qed

\begin{rmk}
If $\pi\colon X\to B$ is an elliptic fibration on a K3 surface, then $\Sha/\Sha^0$ is trivial \cite[I.1, Lemma 5.1]{friedman1994smooth}.
\end{rmk}


\bibliographystyle{alpha}
\bibliography{main.bib}

\begin{multicols}{2}
\footnotesize
\noindent {\sc {\small Anna Abasheva} \\
Columbia University\\
Department of Mathematics, \\
2990 Broadway, New York, NY, USA}\\
{\tt aa4643(at)columbia.edu}

\columnbreak

\noindent {\sc {\small Vasily Rogov}\\
Max-Planck-Institut f\"ur Mathematik in den Naturwissenschaften. Inselstraße 22, 04103 Leipzig, Germany.}\\
{\tt  vasirog (at) gmail.com }

\end{multicols}

\end{document}